\numberwithin{equation}{section}
\theoremstyle{plain}
\newtheorem{theorem}{Theorem}[section]
\newtheorem{tw}[theorem]{Theorem}
\newtheorem{lem}[theorem]{Lemma}
\newtheorem{propn}[theorem]{Proposition}
\newtheorem{cor}[theorem]{Corollary}
\theoremstyle{definition}
\newtheorem{remark}[theorem]{Remark}
\newtheorem{deft}[theorem]{Definition}
\newcommand\Ind{\mathcal{I}}
\newcommand{\bn}{\mathbb{N}}
\newcommand{\bc}{\mathbb{C}}
\newcommand{\Ad}{\textup{Ad}}
\DeclareMathOperator\Fix{Fix}
\DeclareMathOperator\Aut{Aut}
\newcommand\conv{\star}
\newcommand{\inv}{^{-1}}
\newcommand\conj{\overline}
\newcommand\lone{\mathit{L}^1}
\newcommand\ltwo{\mathit{L}^2}
\newcommand\Linf{\mathit{L}^\infty}
\let\VN=\vn
\newcommand{\ot}{\otimes}
\newcommand{\cop}{\Delta}
\newcommand{\id}{\mathrm{id}}
\newcommand{\Com}{\Delta}
\newcommand{\TX}{\mathsf{X}}
\newcommand{\TY}{\mathsf{Y}}
\newcommand{\TZ}{\mathsf{Z}}
\newcommand{\lX}{\mathsf{R}_{\TX}}
\newcommand{\lY}{\mathsf{R}_{\TY}}
\newcommand{\lZ}{\mathsf{R}_{\TZ}}
\newcommand{\CX}{\mathsf{A}_{\TX}}
\newcommand{\CY}{\mathsf{A}_{\TY}}
\newcommand{\QG}{\mathbb{G}}
\newcommand{\hQG}{\widehat{\QG}}
\newcommand{\wt}{\widetilde}
\newcommand{\wot}{\mathop{\overline\otimes}}
\newcommand{\Hil}{\mathsf{H}}
\newcommand{\Kil}{\mathsf{K}}
\newcommand{\alg}{\mathsf{A}}
\newcommand{\mlg}{\mathsf{M}}
\newcommand{\la}{\langle}
\newcommand{\ra}{\rangle}
\newenvironment{rlist}
{

\begin{enumerate}}
{\end{enumerate}}
\begin{document}

\title[TRO actions and crossed products] {Actions of locally compact
  (quantum) groups on ternary rings of operators, their crossed
  products and generalized Poisson boundaries}

\keywords{Locally compact quantum group,
 ternary ring of operators, crossed product}
\subjclass[2000]{Primary 46L65, Secondary 43A05, 47L05}

\begin{abstract}
  \noindent
  Actions of locally compact groups and quantum groups on W*-ternary
  rings of operators are discussed and related crossed products
  introduced. The results generalise those for von Neumann algebraic
  actions with proofs  based mostly on passing to the linking von
  Neumann algebra.
They are motivated by the study of fixed point spaces for convolution operators generated by contractive, non-necessarily positive measures, both in the classical and in the quantum context.
\end{abstract}

\author{Pekka Salmi}
\address{Department of Mathematical Sciences,
University of Oulu, PL 3000, FI-90014 Oulun yliopisto, Finland}
\email{pekka.salmi@iki.fi}

\author{Adam Skalski}
\address{Institute of Mathematics of the Polish Academy of Sciences,
ul.~\'Sniadeckich 8, 00--656 Warszawa, Poland }
\email{a.skalski@impan.pl}

\maketitle

The notion of a Poisson boundary, related to spaces of harmonic functions in the context of the probability theory, i.e.\ the fixed points of convolution operators associated to probability measures, has played an important role in the theory of random walks and various aspects of the potential theory for more than 40 years. Starting from the groundbreaking work of Izumi in \cite{Izumi}, the concept (originally introduced for random walks on $\mathbb{Z}$, but later studied for any locally compact group) was extended also to quantum groups. For the history of further developments we refer to the article \cite{KNR2}, where the abstract structure of non-commutative Poisson boundaries is studied in detail and connected to the crossed products of von Neumann algebras.
The quantum extension shows very clearly that from the modern point of view the construction of the Poisson boundary is a special instance of the Choi--Effros product from the theory of operator algebras granting a von Neumann algebra structure to a space of fixed points of a given unital completely positive map on a von Neumann algebra; this is indeed one of the key observations which led to the current work.

When one considers convolution operators, be it in the classical or
the quantum framework, it is natural to analyse not only those
associated to probability measures, but also those arising from general signed
measures. In general the problem of the study of the fixed points
becomes then far more complicated, and even the characterisation of
idempotent signed measures (i.e.\ those whose convolution operators
are idempotent maps) remains unknown for non-abelian groups - we refer
to \cite{ours} for a longer discussion. The questions become more
tractable if one focuses on contractive signed measures. In the
classical and the dual to classical context related issues are studied
for example in the monograph \cite{Chu-Lau}; in the quantum framework
the idempotent problem was solved in \cite{ours}. In both of these
works one saw a natural emergence of the W*-TRO structures
(\emph{W*-ternary rings of operators}). This was a motivation for a
systematic study of the spaces of fixed points of completely
contractive maps which we conduct in this paper.

It turns out that whenever $\mlg$ is a von Neumann algebra and
$P:\mlg \to \mlg$ is a completely contractive normal map, then a Choi--Effros
type construction, exploiting the algebraic properties of $P$
established in \cite{Youngson}, equips the space $\Fix\, P$ with a
unique structure of a W*-TRO, which we may call a \emph{generalized
  noncommutative Poisson boundary} related to $P$. To understand the
structure of the resulting TRO for a contractive convolution operator,
one needs to develop also the notion of (quantum) group actions on
W*-TROs and analyse appropriate crossed products (in the classical
context one can find related  work in \cite{HamanaUnpubl}). Several
theorems follow here relatively easily from their von
Neumann algebraic counterparts, as each W*-TRO is a corner in its
linking von Neumann algebra; some other require certain care, as we
mention below.

Once a satisfactory theory of crossed products is developed, it is
natural to expect a generalisation of the main result of \cite{KNR2},
which would show that the generalised noncommutative Poisson boundary
for an `extended' contractive convolution operator $\Theta_{\mu}$ is
isomorphic to the crossed product of the analogous boundary for the
`standard' convolution operator $R_\mu$ by a natural action of the
underlying group. This is indeed what we prove here, but only for
classical locally compact groups (the positive case considered in
\cite{KNR2} yielded the result for general locally compact quantum
groups). Here we see an example where the passage from the von Neumann
algebra framework to the TRO case is highly non-trivial -- very
roughly speaking the reason is that the Choi--Effros type construction
connects a given concrete initial data (the pair $(\mlg, P)$) with an
\emph{abstract} W*-TRO $\TX$ -- and the linking von Neumann algebra of
$\TX$ arises naturally only once we fix a concrete realisation of the
latter, which need not be related in any explicit way to the original data.

The plan of the article is as follows: in the first section we recall
basic facts regarding the ternary rings of operators, prove a few
technical lemmas  and connect fixed points of the completely
contractive maps to the TROs, introducing the corresponding version of
the Choi--Effros product and discussing its basic properties. In
Section 2 we develop the notion of actions of locally compact groups
on TROs and construct respective crossed products, carefully
developing various points of view on this concept. The next section
extends the construction to the case of locally compact quantum groups
and deals with certain technicalities describing the way in which one
can induce actions on TROs arising as fixed point spaces. These are
applied in Section 4 to the discussion of the TROs arising from fixed
point spaces of contractive convolution operators on locally compact
(quantum) groups.

The angled brackets will denote the closed linear span. Hilbert space
scalar products will be linear on the right. For a locally compact
group $G$ we write $L^2(G)$ for the $L^2$-space with respect to the
\emph{left} invariant Haar measure; the group von Neumann algebra
$\VN(G)$, will be the von Neumann algebra generated by the \emph{left}
regular representation.

\vspace*{0.5 cm}
\noindent
    {\bf Acknowledgement.}\
    The work on this article began during our joint Research in
    Pairs  visit with Matthias Neufang and Nico Spronk to the Mathematisches Forschungsinstitut Oberwolfach in August
2012. We are very grateful to MFO for providing ideal research
conditions and to both Matthias and Nico for many observations which initiated this work. AS would also like to thank Zhong-Jin Ruan for stimulating discussions on the subject of the paper.
AS  was partially supported by the NCN (National Centre of Science) grant
2014/14/E/ST1/00525.

\section{W*-ternary rings of operators and fixed points of
  completely contractive maps}

Recall that a (concrete) \emph{TRO}, i.e.\ a \emph{ternary ring of
  operators}, $\TX$ is a closed subspace of $B(\Hil;\Kil)$, where
$\Hil$ and $\Kil$ are some Hilbert spaces, which is closed under the
ternary product: $(a,b,c) \mapsto ab^*c$. TROs possess natural
operator space structure and in fact can be also characterised
abstractly, as operator spaces with a ternary product satisfying
certain properties, see \cite{NealRusso}. To each TRO $\TX$ one can
associate a C*-algebra $\CX \subset B(\Kil \oplus \Hil)$, called the
\emph{linking algebra} of $\TX$. It is explicitly defined as \[
\CX:= \begin{pmatrix}
         \la \TX \TX^*\ra & \TX \\
                  \TX^* & \la \TX^*\TX \ra
       \end{pmatrix} \subset B(\Kil\oplus \Hil).
\]
We always view $\TX$, $\TX^*$ and the C*-algebras $\la \TX \TX^*\ra$,
$\la \TX^*\TX \ra$ as subspaces of $\CX$.
If $\TX$ and $\TY$ are TROs, then a linear map $\alpha:\TX \to \TY$ is
said to be a \emph{TRO morphism} if it preserves the ternary product.
A TRO morphism  admits a unique extension to a $^*$-homomorphism $\gamma:\CX
\to \CY$ -- this was proved by M.\,Hamana in \cite{Hamana} (see also
\cite{HamanaUnpubl} and \cite{Zettl}) and we will call this map the
\emph{Hamana extension} of $\alpha$. Note that $\gamma$ is defined in
a natural way, so for example if $x\in \TX$ then
$\gamma(xx^*)=\alpha(x) \alpha(x)^*$. Further we call a TRO morphism
$\alpha:\TX \to \TY$ non-degenerate if the linear spans of
$\alpha(\TX)\TY^*\TY$ and $\alpha(\TX)^*\TY \TY^*$ are norm dense
respectively in $\TY$ and $\TY^*$; in other words the space
$\alpha(\TX)$, which is a sub-TRO of $\TY$ by \cite{Hamana}, is a
\emph{non-degenerate sub-TRO} of $\TY$, as defined for example in
\cite{SSTRO}. Then using the Hamana extensions one can easily show
that $\alpha$ is non-degenerate if and only if its Hamana extension
$\gamma:\CX \to \CY$ is non-degenerate (as a $^*$-homomorphism between
C*-algebras) -- see  Proposition 1.1 of \cite{SSTRO} for this result
phrased in terms of sub-TROs.

We say that $\TX$ is a \emph{W*-TRO} if it is weak$^*$-closed in $B(\Hil;\Kil)$. We will usually assume that the TROs we study are non-degenerately represented, i.e.\ $\la \TX\Hil \ra =\Kil$, $\la \TX^* \Kil \ra = \Hil$. The \emph{linking von Neumann algebra} associated to $\TX$, equal to $\CX''$, will be denoted by $\lX$, so that
 \[
\lX:= \begin{pmatrix}
         \la \TX \TX^*\ra'' & \TX \\
                  \TX^* & \la \TX^*\TX \ra ''
       \end{pmatrix} \subset B(\Kil\oplus \Hil).
\]
For a TRO morphism between W*-TROs non-degeneracy will mean that the
linear spans of the spaces introduced in the paragraph above are
weak$^*$-dense in the respective TROs.
The predual of a W*-TRO $\TX$ will be denoted by $\TX_*$; it is not
difficult to see that $\TX_*=\{\omega|_{\TX}: \omega \in (\lX)_*\}$.

There is also an abstract characterisation of TROs and W*-TROs
due to Zettl~\cite{Zettl}, which we now recall.
An \emph{abstract TRO} is a Banach space $\TX$ equipped with
a ternary operation
\[
\{\cdot, \cdot, \cdot\}: \TX\times\TX\times\TX \to \TX
\]
such that the following conditions hold ($x,y,z,u,v \in \TX$):
\begin{enumerate}
\item the operation is linear in the first and the third variable
  and conjugate linear in the second,
\item $\{\{x,y,z\}, u, v\} = \{x, \{u,z,y\}, v\} = \{x, y, \{z, u, v\}\}$;
\item $\|\{x,y,z\}\| \le \|x\| \|y\| \|z\|$;
\item $\|\{x,x,x\}\| = \|x\|^3$.
\end{enumerate}
An \emph{abstract W*-TRO} is an abstract TRO that
is a dual Banach space.
Zettl~\cite{Zettl} proved that these abstractly defined objects
have concrete representations as TROs and W*-TROs, respectively.

The next result is a W*-version of the fact due to Hamana regarding
images of TROs, observed in \cite{BLM}.

\begin{lem}
If $\TX$ and $\TY$ are W*-TROs, and $\alpha:\TX \to \TY$ is a normal
TRO morphism, then $\alpha(\TX)$ is a W*-TRO.
\end{lem}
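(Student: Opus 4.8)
The plan is to build on the C*-algebraic version of the statement and to upgrade norm-closedness to weak$^*$-closedness by a compactness argument; at this stage I would deliberately avoid trying to extend $\alpha$ to a normal $^*$-homomorphism of the linking von Neumann algebras (I comment on that alternative at the end).

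First I would record the algebraic and norm-theoretic input. Since $\alpha$ is a TRO morphism, $\alpha(\TX)$ is closed under the ternary product, because $\alpha(x)\alpha(y)^*\alpha(z)=\alpha(xy^*z)$; so it is at least an abstract sub-TRO. By the result of Hamana recalled above and observed in \cite{BLM}, the image of a TRO under a TRO morphism is moreover norm-closed, so $\alpha(\TX)$ is a norm-closed sub-TRO of $\TY$, hence a Banach space. Consequently $\alpha\colon\TX\to\alpha(\TX)$ is a bounded surjection of Banach spaces, and the open mapping theorem yields a constant $C>0$ such that every $y\in\alpha(\TX)$ admits $x\in\TX$ with $\alpha(x)=y$ and $\|x\|\le C\|y\|$.

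Next I would invoke duality. As a W*-TRO, $\TY$ is a dual Banach space with predual $\TY_*$, and $\alpha(\TX)$ is a linear (so convex) subspace; by the Krein--\v{S}mulian theorem it therefore suffices to prove that $\alpha(\TX)\cap\{y\in\TY:\|y\|\le\rho\}$ is weak$^*$-closed for every $\rho>0$. Here I would use that $\TX$ is itself a dual space, so its closed ball $B^{\TX}_{C\rho}=\{x\in\TX:\|x\|\le C\rho\}$ is weak$^*$-compact by Banach--Alaoglu; as $\alpha$ is normal, i.e.\ weak$^*$-to-weak$^*$ continuous, the image $\alpha\bigl(B^{\TX}_{C\rho}\bigr)$ is weak$^*$-compact in $\TY$, and in particular weak$^*$-closed. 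The open mapping bound from the previous step gives the identity
\[
\alpha(\TX)\cap\{y\in\TY:\|y\|\le\rho\}=\alpha\bigl(B^{\TX}_{C\rho}\bigr)\cap\{y\in\TY:\|y\|\le\rho\}.
\]
Here $\supseteq$ is immediate, while for $\subseteq$ any $y\in\alpha(\TX)$ with $\|y\|\le\rho$ has a preimage $x$ with $\|x\|\le C\|y\|\le C\rho$, so $y\in\alpha\bigl(B^{\TX}_{C\rho}\bigr)$. The right-hand side is the intersection of two weak$^*$-closed sets, hence weak$^*$-closed, and Krein--\v{S}mulian then shows that $\alpha(\TX)$ is weak$^*$-closed in $\TY$. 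Being also a sub-TRO, $\alpha(\TX)$ is a W*-TRO.

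The one point demanding genuine care is the interplay between normality and closedness: weak$^*$-continuity of $\alpha$ by itself does not force the range to be weak$^*$-closed, and what rescues the argument is that the range is already norm-closed by the C*-level theorem, which via the open mapping theorem supplies the uniform control $\|x\|\le C\|y\|$ of preimages needed to realise $\alpha(\TX)\cap\{\|y\|\le\rho\}$ as the image of a single weak$^*$-compact ball. An alternative, more in the spirit of the linking-algebra technique used elsewhere in the paper, would be to extend $\alpha$ to a normal $^*$-homomorphism $\pi\colon\lX\to\lY$, observe that the range of a normal $^*$-homomorphism of von Neumann algebras is weak$^*$-closed, and then recover $\alpha(\TX)$ as a corner of $\pi(\lX)$; there the main obstacle is precisely the construction of $\pi$, since the weak$^*$-continuity of the Hamana extension on the diagonal corners $\la\TX\TX^*\ra''$ and $\la\TX^*\TX\ra''$ is not immediate from normality of $\alpha$ on $\TX$ alone.
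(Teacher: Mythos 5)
Your proof is correct, but it reaches the decisive compactness step by a different mechanism than the paper. The paper's argument (following Section 8.5.18 of \cite{BLM}) exploits TRO structure theory: the kernel of $\alpha$, being a weak$^*$-closed ideal of $\TX$, has the form $f\TX$ for a central projection $f\in\la\TX\TX^*\ra''$, and the restriction $\alpha'$ of $\alpha$ to $(1-f)\TX$ is an injective --- hence isometric --- TRO morphism with the same range; consequently the unit ball of $\alpha(\TX)$ is \emph{exactly} the image of the unit ball of $(1-f)\TX$, and weak$^*$-compactness follows from Banach--Alaoglu plus normality. You instead quote the C*-level theorem that ranges of TRO morphisms are norm-closed and feed it into the open mapping theorem to obtain the uniform preimage bound $\|x\|\le C\|y\|$, which plays exactly the role that isometry of $\alpha'$ plays in the paper. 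Both routes are sound: the paper's gives exact ball control (in effect $C=1$, with the range identified as a copy of the W*-TRO $(1-f)\TX$), whereas yours is softer Banach-space theory but outsources the structural work to the cited closed-range theorem, whose own proof is essentially the same kernel/quotient analysis. One point in your favour is that you make the Krein--\v{S}mulian step explicit; the paper stops at weak$^*$-closedness of the unit ball, which does not by itself give weak$^*$-closedness of the subspace without that theorem. Your closing remark is also well judged: in this paper the W*-Hamana extension (Proposition \ref{extend}) is proved \emph{using} the present lemma, so the linking-algebra route you describe and set aside would indeed be circular here.
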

\begin{proof}
This is proved in Section 8.5.18 in \cite{BLM}. The main idea is as follows: the kernel of $\alpha$ is a sub-TRO of $\TX$. Thus it can be written as $f\TX$, where $f$ is a central projection in the von Neumann algebra $\la \TX \TX^* \ra ''$. The morphism $\alpha':(1-f)\TX \to \TY$ given by the restriction of $\alpha$ is then easily seen to be injective; moreover $\alpha'(\TX)=\alpha(\TX)$. Then we deduce that the unit ball of $\alpha'(\TX)$ is the image of the unit ball of $\TX$, hence it is weak$^*$-compact (so in particular weak $^*$-closed).
\end{proof}

Proposition 3.1 of \cite{SSTRO} shows that a TRO morphism $\alpha:\TX \to
\TY$ is non-degenerate if and only if the W*-TRO $\alpha(\TX)$ is
non-degenerately represented.
The last lemma can be used to note that Hamana extensions can be
considered also in the W*-category and moreover have the expected properties.

\begin{propn} \label{extend}
Let $\TX$ and $\TY$ be W*-TROs and let $\alpha:\TX \to \TY$ be a
normal TRO morphism. Then there exists a unique normal
$^*$-homomorphism $\beta:\lX\to \lY$ such that
\[ \beta \begin{pmatrix} 0 & x \\ 0 &0 \end{pmatrix} = \begin{pmatrix} 0 & \alpha(x) \\ 0 &0 \end{pmatrix}, \qquad x\in \TX. \]
Moreover $\beta|_{\CX}$ is the Hamana extension discussed above, the extension construction preserves the composition, and moreover
\begin{rlist}
\item $\alpha$ is  injective if and only if $\beta$ is injective;
 \item $\alpha$ is non-degenerate if and only if $\beta$ is unital.
 \end{rlist}
\end{propn}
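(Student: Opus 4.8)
The plan is to lift the Hamana extension to the von Neumann algebraic level. Writing $\gamma\colon\CX\to\CY$ for the $C^*$-algebraic Hamana extension of $\alpha$, so that $\gamma(x)=\alpha(x)$ and $\gamma(xy^*)=\alpha(x)\alpha(y)^*$ for $x,y\in\TX$, I would first dispose of uniqueness, since it pins down what $\beta$ must be. Any $^*$-homomorphism $\beta$ as in the statement is forced on $\TX$ by the displayed formula, hence on $\TX^*$ by taking adjoints and on the diagonal corners by e.g. $\beta\begin{pmatrix}xy^*&0\\0&0\end{pmatrix}=\begin{pmatrix}\alpha(x)\alpha(y)^*&0\\0&0\end{pmatrix}$; thus $\beta|_{\CX}=\gamma$ is determined, and since $\CX$ is weak$^*$-dense in $\lX$ while $\beta$ is normal, $\beta$ is unique. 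The same computation shows that once existence is known $\beta|_{\CX}$ coincides with $\gamma$, so the Hamana extension claim is automatic.

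For existence the real content is normality, and this is the step I expect to be the main obstacle. By the standard normal-extension theorem it suffices to check that $\gamma$ is weak$^*$-continuous on bounded subsets of $\CX$: Kaplansky density then lets one define $\beta$ on $\lX$ by weak$^*$-continuity, with the homomorphism and $^*$-properties passing to the limit. Cutting by the two diagonal corner projections of $\lX$ (which lie in $\lX$) is weak$^*$-continuous, so a bounded net in $\CX$ tends to $0$ weak$^*$ iff each of its four matrix entries does, and $\gamma$ respects this matrix decomposition; hence it is enough to treat the off-diagonal and the diagonal corners separately. On the off-diagonal corner $\TX$ the map $\gamma$ is exactly $\alpha$, so weak$^*$-continuity there is precisely the hypothesis that $\alpha$ is normal (the $\TX^*$ corner following by adjoining).

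The diagonal corners are where I would use the already proved Lemma. Consider a bounded net $(b_i)$ in $\la\TX\TX^*\ra$ with $b_i\to 0$ weak$^*$ in $\la\TX\TX^*\ra''$; I must show $\gamma(b_i)\to0$ weak$^*$. The key identity, valid at the $C^*$-level since $\gamma$ is multiplicative, is $\gamma(b)\,\alpha(y)=\alpha(by)$ for $b\in\la\TX\TX^*\ra$ and $y\in\TX$. Right multiplication by the fixed $y$ is weak$^*$-continuous, so $b_iy\to0$ weak$^*$ in $\TX$, whence $\gamma(b_i)\alpha(y)=\alpha(b_iy)\to0$ weak$^*$ by normality of $\alpha$. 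By the Lemma $\alpha(\TX)$ is a W*-TRO, so $\la\alpha(\TX)\alpha(\TX)^*\ra''$ has a support projection $p\in\lY$ with $\gamma(b_i)=p\,\gamma(b_i)\,p$, and the vectors $\alpha(y)\eta$ are dense in the range of $p$. Testing the bounded net $\gamma(b_i)$ against such vectors on both sides and using $\gamma(b_i)\alpha(y)\to0$ gives $\gamma(b_i)\to0$ weak$^*$. The $\la\TX^*\TX\ra$ corner is symmetric, so $\gamma$ extends to a normal $^*$-homomorphism $\beta\colon\lX\to\lY$.

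Finally the ``moreover'' statements. Preservation of composition follows from uniqueness: if $\alpha'\colon\TY\to\TZ$ is a further normal TRO morphism with extension $\beta'$, then $\beta'\circ\beta$ is a normal $^*$-homomorphism $\lX\to\lZ$ restricting to $\alpha'\circ\alpha$ on $\TX$, hence is the extension of $\alpha'\circ\alpha$. For (i), injectivity of $\beta$ trivially gives injectivity of its restriction $\alpha$; conversely $\ker\beta=z\lX$ for a central projection $z$, which commutes with the corner projections and so is diagonal, $z=\begin{pmatrix}z_1&0\\0&z_2\end{pmatrix}$ with $z_1\in\la\TX\TX^*\ra''$ central and $z_1x=xz_2$ for $x\in\TX$. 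Then $\begin{pmatrix}0&z_1x\\0&0\end{pmatrix}\in\ker\beta$ forces $\alpha(z_1x)=0$, so if $\alpha$ is injective then $z_1x=0$ for all $x$, giving $z_1\la\TX\TX^*\ra=0$ and hence $z_1=0$ (as $z_1$ is a projection in $\la\TX\TX^*\ra''$); symmetrically $z_2=0$, so $\beta$ is injective. For (ii), recall from the discussion before the proposition that $\alpha$ is non-degenerate iff $\gamma=\beta|_{\CX}$ is non-degenerate; since $1_{\lX}$ is the weak$^*$-limit of an approximate identity $(e_\lambda)$ of $\CX$ and $\beta$ is normal, $\beta(1_{\lX})=\lim_\lambda\gamma(e_\lambda)$, and this limit equals $1_{\lY}$ precisely when $\gamma$ is non-degenerate. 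Thus $\beta$ is unital iff $\alpha$ is non-degenerate.
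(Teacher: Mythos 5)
Your construction of $\beta$ is correct, but it follows a genuinely different route from the paper's. The paper first reduces to the case of surjective $\alpha$ (using the preceding lemma, so that $\mathsf{R}_{\alpha(\TX)}$ is a von Neumann subalgebra of $\lY$), and then either argues directly as in \cite{SSTRO} or quotients out $\ker\alpha$ and invokes Solel's results on isometries of Hilbert C*-modules \cite{Solel}, together with a verification that the map produced there agrees with the weak$^*$-continuous extension of the Hamana extension; statement (i) is then deduced from the characterisation of injective morphisms as isometries. You instead work corner-wise with the C*-level Hamana extension $\gamma$: normality of $\alpha$ handles the off-diagonal corners, and the identity $\gamma(b)\alpha(y)=\alpha(by)$ together with the support projection of $\alpha(\TX)$ handles the diagonal ones -- a self-contained and correct verification that $\gamma$ is weak$^*$-continuous on bounded sets -- and your central-projection proof of (i) likewise avoids \cite{Solel} entirely. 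What your approach buys is independence from external references; what it costs is that the ``standard normal-extension theorem'' you invoke becomes the load-bearing step. That principle is true, but it deserves justification, e.g.: extend $\gamma$ to a normal $^*$-homomorphism on the bidual $\CX^{**}$, and use your bounded weak$^*$-continuity together with Goldstine's theorem to show that this extension annihilates the kernel of the canonical normal surjection $\CX^{**}\to\lX$, hence factors through $\lX$; multiplicativity and self-adjointness of the factored map then follow from separate weak$^*$-continuity of multiplication.

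There is, however, a genuine (if small) gap in (ii). For morphisms between W*-TROs the paper defines non-degeneracy by \emph{weak$^*$-density} of the spans of $\alpha(\TX)\TY^*\TY$ and $\alpha(\TX)^*\TY\TY^*$, whereas the equivalence you cite from the preamble ($\alpha$ non-degenerate iff its Hamana extension $\gamma$ is non-degenerate) is stated and proved for the \emph{norm-density} notion. Your approximate-identity argument does correctly show that $\beta$ is unital iff $\gamma(\CX)\CY$ is norm-dense in $\CY$, and since norm-density of the spans implies weak$^*$-density, the implication ``$\beta$ unital $\Rightarrow$ $\alpha$ non-degenerate'' survives. But the converse, ``weak$^*$-density $\Rightarrow$ $\beta$ unital'', is not covered by anything you cite. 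It can be repaired in a few lines: $\beta(1_{\lX})$ is a diagonal projection acting as the identity on the left of $\alpha(\TX)$ and of $\alpha(\TX)^*$, hence on the left of the two spans above, hence -- by separate weak$^*$-continuity of multiplication and the assumed weak$^*$-density -- on the left of all of $\TY$ and $\TY^*$; as $\TY$ is non-degenerately represented this forces $\beta(1_{\lX})=1_{\lY}$. (The paper instead gets this from Proposition 3.1 of \cite{SSTRO}: $\alpha$ is non-degenerate iff $\alpha(\TX)$ is non-degenerately represented, iff $\beta(\lX)=\mathsf{R}_{\alpha(\TX)}$ contains the identity.)
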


\begin{proof}
There are at least two ways to see the first statement (the rest is
relatively easy). In the first step one observes that we can assume
that $\alpha$ is surjective, using the last lemma (indeed, if
$\TZ=\alpha(\TX)$, then $\lZ$ is a von Neumann subalgebra of
$\lY$).

Now we can either proceed directly, as in \cite{SSTRO}, using
non-degeneracy, or first pass to the situation where $\alpha$ is
isometric, quotienting out its kernel (this leads to another W*-TRO,
as follows from \cite{Hamana}) and then use the proof of Corollary
3.4 in \cite{Solel}. The reason we cannot use  this corollary directly
is that we need to verify that the map obtained there (or in fact
rather via Theorem 2.1 of that paper) coincides with the
weak$^*$-continuous extension of the Hamana extension. This however
can be checked directly, following the arguments in Corollary 3.4 and
Lemma 2.5 of \cite{Solel}.

Injectivity of $\alpha$ (respectively, $\beta$) is equivalent to $\alpha$ (respectively, $\beta$) being isometric; thus \cite{Solel} implies that injectivity of $\alpha$ is equivalent to that of $\beta$.

For a W*-TRO $\TX\subset B(\Hil;\Kil)$ it is elementary to check that
$\TX$ is non-degenerately represented if and only if $\lX$ contains
the unit of $B(\Kil \oplus \Hil)$. This together with the comments
before the proposition implies the last statement.
\end{proof}

Note that in the situation above by boundedness and normality of the maps in question,  we have the following consequence of the algebraic form of Hamana extensions (in which we view both $\TX$ and $\la \TX \TX^* \ra ''$ as  subspaces of $\lX$):
\begin{equation} \beta (z) \alpha(x) = \alpha(zx), \qquad x \in \TX,
  z\in \la \TX \TX^* \ra ''. \label{betaalpha}\end{equation}
If $\TX$ happens to be a von Neumann algebra, $\lX \cong M_2(\TX)$; if $\TY$ is another von Neumann algebra and we assume that $\alpha:\TX\to\TY$ is a $^*$-homomorphism, then $\beta$ is the usual matrix lifting of $\alpha$. Finally we note an easy observation which will be useful later.

\begin{cor} \label{extendconverse}
Let $\TX$ and $\TY$ be W*-TROs and let $\beta:\lX \to \lY$ be a normal
$^*$-homomorphism. Then $\beta$ is the Hamana extension of a normal
TRO morphism between $\TX$ and $\TY$ if and only if $\beta(\TX)\subset
\TY$. If $\TX$ and $\TY$ are respectively non-degenerately represented
in $B(\Hil_1;\Kil_1)$ and in $B(\Hil_2;\Kil_2)$, then the conditions
above are equivalent to the equality
\[ P_{\Kil_2}\beta(P_{\Kil_1} x P_{\Hil_1})P_{\Hil_2} = \beta(P_{\Kil_1} x P_{\Hil_1})\]
being valid for all $x \in \lX$.
\end{cor}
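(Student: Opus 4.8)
The plan is to establish the main equivalence first and then read off the concrete matrix reformulation, which is just an unwinding of what it means for an element of $\lY$ to sit in the corner copy of $\TY$. The forward implication is immediate from Proposition~\ref{extend}: if $\beta$ is the Hamana extension of a normal TRO morphism $\alpha\colon \TX \to \TY$, then by the defining property of that extension $\beta$ sends the upper right corner element corresponding to $x \in \TX$ to the corner element corresponding to $\alpha(x) \in \TY$, so $\beta(\TX) \subset \TY$ once both TROs are viewed inside their linking von Neumann algebras.

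For the converse I would argue as follows. Assume $\beta(\TX) \subset \TY$ and define $\alpha\colon \TX \to \TY$ as the restriction of $\beta$ to the corner copy of $\TX$; this lands in $\TY$ precisely by hypothesis. It is normal, since for $\omega = \psi|_{\TY}$ with $\psi \in (\lY)_*$ one has $\omega \circ \alpha = (\psi \circ \beta)|_{\TX}$, and $\psi \circ \beta \in (\lX)_*$ by normality of $\beta$, so $\omega \circ \alpha \in \TX_* = \{\psi'|_{\TX} : \psi' \in (\lX)_*\}$. The key point is that $\alpha$ is a TRO morphism: inside the linking algebra the ternary product $xy^*z$ of corner elements is computed by ordinary multiplication and adjoint, so since $\beta$ is a $^*$-homomorphism one gets $\alpha(xy^*z) = \beta(x)\beta(y)^*\beta(z) = \alpha(x)\alpha(y)^*\alpha(z)$. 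Finally $\beta$ is itself a normal $^*$-homomorphism sending the corner element $x$ to the corner element $\alpha(x)$, so by the uniqueness part of Proposition~\ref{extend} it must coincide with the Hamana extension of $\alpha$.

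For the matrix reformulation I would use two elementary observations about the matrix pictures of the linking algebras. As $x$ runs over $\lX$, the element $P_{\Kil_1} x P_{\Hil_1}$ runs over exactly the upper right corner, that is, over the copy of $\TX$ inside $\lX$. Dually, an element $w \in \lY \subset B(\Kil_2 \oplus \Hil_2)$ lies in the corner copy of $\TY$ if and only if $P_{\Kil_2} w P_{\Hil_2} = w$, since compressing by $P_{\Kil_2}$ on the left and $P_{\Hil_2}$ on the right extracts the $(1,2)$-entry and returns $w$ only when $w$ has no other entries. Applying this to $w = \beta(P_{\Kil_1} x P_{\Hil_1})$ shows that the displayed equality, required for all $x \in \lX$, is exactly the statement $\beta(\TX) \subset \TY$.

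I expect no serious obstacle, as the whole statement is essentially bookkeeping with corners of linking algebras once Proposition~\ref{extend} is available. The one point demanding a little care is the invocation of uniqueness in Proposition~\ref{extend}: one must verify that the given $\beta$ genuinely realises the defining corner relation for $\alpha$ (rather than merely agreeing with the Hamana extension on $\TX$), so that uniqueness applies and forces equality of the two $^*$-homomorphisms on all of $\lX$.
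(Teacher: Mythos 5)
Your proposal is correct and follows essentially the same route as the paper: restricting $\beta$ to the corner copy of $\TX$ gives a normal TRO morphism (since the ternary product in the linking algebra is just $xy^*z$ and $\beta$ is a $^*$-homomorphism), the equivalence then follows from the uniqueness part of Proposition~\ref{extend}, and the displayed matrix identity is precisely the bookkeeping statement that $\beta$ maps the $(1,2)$-corner of $\lX$ into the $(1,2)$-corner of $\lY$. The paper's proof is just a terser version of exactly this argument, leaving implicit the details (normality of the restriction, the corner characterisations) that you spell out.
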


\begin{proof}
A simple calculation shows that if $\beta(\TX) \subset \TY$, then
$\beta|_{\TX}$ is a TRO morphism. Then the equivalence follows from
the uniqueness of Hamana extensions and the second statement is an
easy consequence of the definitions of $\lX$ and $\lY$.
\end{proof}

Given two W*-TROs $\TX\subset B(\Hil_1;\Kil_1)$ and
$\TY\subset B(\Hil_2;\Kil_2)$, we can naturally consider the W*-TRO
$\TX \wot\TY$ defined as the weak$^*$ closure of the algebraic tensor
product $\TX \odot \TY \subset B(\Hil_1 \ot \Hil_2; \Kil_1\ot\Kil_2)$.
The fact that it is closed under the ternary product can be easily checked.
Note that if $\mlg$ is a von Neumann algebra, then  we have a natural
identification of $\mathsf{R}_{\TX \wot \mlg}$ with $\lX \wot \mlg$;
this will be of use later.

Similarly note for the future use that if $\mathsf{Z}\subset B(\Hil)$
is a weak$^*$-closed subalgebra and $P\in \mathsf{Z}$ is a projection,
then $P\mathsf{Z}$ is weak$^*$-closed. This implies that if say
$Q\in \mathsf{Z}$ is another projection and  $\mathsf{W}\subset B(\Kil)$ is
a weak$^*$-closed subalgebra, then
\[  P \mathsf{Z}Q \wot \mathsf{W} = (P \ot I_{\Kil})(\mathsf{Z} \wot
\mathsf{W}) (Q \ot I_{\Kil}).  \]
As usual, $\mathsf{Z} \wot \mathsf{W}$ denotes the
 weak$^*$ closure  of algebraic tensor product
$\mathsf{Z} \odot \mathsf{W}$ inside $B(\Hil) \wot B(\Kil)$.

Finally recall (for example from Chapter 7 of \cite{EfR}) that if $\TX$ and $\TY$ are dual operator spaces, then
their \emph{Fubini tensor product} $\TX \ot_F \TY$ is defined
abstractly as the operator space dual of
$\TX_* \widehat{\otimes} \TY_*$; if $\TX$ and $\TY$ are
weak$^*$-closed subspaces of say $B(\Hil)$ and $B(\Kil)$,
then $\TX \ot_F \TY$ can be realised as
\[
\{u\in B(\Hil) \wot B(\Kil) : (\omega\ot \id)u\in \TY
\text{ and } (\id \ot\sigma)u\in \TX\text{ for every }
\omega\in B(\Hil)_*, \sigma\in B(\Kil)_*\},
\]
Clearly, $\TX \wot \TY$ is contained in $\TX \ot_F \TY$.

\begin{lem} \label{lemma:fubini}
Let $\TX$ and $\TY$ be dual operator spaces that are weak$^*$
completely contractively complemented in von Neumann
algebras \textup{(}note that in particular
W*-TROs satisfy these assumptions\textup{)}. Then the natural
weak$^*$-continuous completely isometric embedding $\TX\wot \TY
\hookrightarrow \TX \ot_F \TY$ is in fact an isomorphism.
\end{lem}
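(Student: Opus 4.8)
The plan is to deduce the statement from the corresponding (and classical) equality for von Neumann algebras, transporting it to the complemented subspaces by means of the complementing projections. Write $\TX \subseteq \mlg \subseteq B(\Hil)$ and $\TY \subseteq \mathsf{N} \subseteq B(\Kil)$, where $\mlg, \mathsf{N}$ are von Neumann algebras and $E \colon \mlg \to \mlg$, $F \colon \mathsf{N} \to \mathsf{N}$ are normal completely contractive idempotents with ranges $\TX$ and $\TY$ respectively (for a W*-TRO non-degenerately represented in $B(\Hil';\Kil')$ one takes $\mlg = \lX$ and $E$ the compression $z \mapsto P_{\Kil'} z P_{\Hil'}$). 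The single external input I would use is the Effros--Ruan identification of the predual of a spatial tensor product, $(\mlg \wot \mathsf{N})_* = \mlg_* \widehat{\otimes} \mathsf{N}_*$ (Chapter 7 of \cite{EfR}); dualising it gives at once $\mlg \ot_F \mathsf{N} = (\mlg_* \widehat{\otimes} \mathsf{N}_*)^* = \mlg \wot \mathsf{N}$, i.e.\ for von Neumann algebras the Fubini and spatial tensor products coincide.

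Next I would manufacture the relevant projections on $\mlg \wot \mathsf{N}$. Since $E$ is normal it has a preadjoint $E_* \colon \mlg_* \to \mlg_*$, which is a complete contraction, so $E_* \widehat{\otimes} \id \colon \mlg_* \widehat{\otimes} \mathsf{N}_* \to \mlg_* \widehat{\otimes} \mathsf{N}_*$ is again a complete contraction; its adjoint is a normal complete contraction $E \ot \id$ on $\mlg \wot \mathsf{N}$, and a check on elementary tensors shows $(E \ot \id)(a \ot b) = E(a) \ot b$. Constructing $\id \ot F$ and $E \ot F = (E \ot \id)(\id \ot F)$ in the same fashion, all three are normal idempotents (because $E,F$ are), and by normality together with weak$^*$-density of the algebraic tensor product their ranges are exactly $\TX \wot \mathsf{N}$, $\mlg \wot \TY$ and $\TX \wot \TY$.

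For the heart of the argument I would fix $u \in \TX \ot_F \TY$; viewing $\TX \subseteq \mlg$, $\TY \subseteq \mathsf{N}$ we have $\TX \ot_F \TY \subseteq \mlg \ot_F \mathsf{N} = \mlg \wot \mathsf{N}$, so $u$ lives in the spatial tensor product. For every $\sigma \in B(\Kil)_*$ the identity $(\id \ot \sigma)((E \ot \id)u) = E((\id \ot \sigma)u)$ holds, as both sides are normal in $u$ and agree on elementary tensors. Because $u \in \TX \ot_F \TY$ we have $(\id \ot \sigma)u \in \TX$, which $E$ fixes, whence $(\id \ot \sigma)((E \ot \id)u) = (\id \ot \sigma)u$ for all $\sigma$; as the right slice maps separate the points of $\mlg \wot \mathsf{N}$, this forces $(E \ot \id)u = u$. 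Symmetrically $(\id \ot F)u = u$, and therefore $(E \ot F)u = (E \ot \id)(\id \ot F)u = u$. Since the range of $E \ot F$ is $\TX \wot \TY$, we conclude $u \in \TX \wot \TY$, giving the missing inclusion $\TX \ot_F \TY \subseteq \TX \wot \TY$ and hence the desired isomorphism.

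The step I expect to require the most care is the construction of $E \ot \id$ and $\id \ot F$ as genuine normal maps on the spatial tensor product and the verification that they intertwine with the slice maps as above; the predual/projective-tensor-product route sidesteps any ad hoc dilation and makes both this point and the von Neumann Fubini equality fall out of the same Effros--Ruan theorem, which is why I would organise the proof around it rather than around, say, a Paulsen-trick reduction to completely positive maps.
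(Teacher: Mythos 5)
Your proof is correct and takes essentially the same route as the paper: both reduce to the ambient von Neumann algebras, form the normal tensor product of the two complementing projections (the paper cites Chapter 7 of \cite{EfR} and Proposition 4.3 of \cite{EfR-kac} for its existence, while you construct it by dualising $E_*\widehat{\otimes}F_*$ on the projective tensor product of the preduals), show that this normal idempotent fixes every element of $\TX \ot_F \TY$, and conclude that such an element lies in $\TX \wot \TY$. The only differences are expository: you verify the fixed-point property by an explicit slice-map computation and use that the range of $E\ot F$ is $\TX\wot\TY$, whereas the paper appeals to uniqueness of normal extensions and finishes with a weak$^*$-approximating net in $R_\TX\odot R_\TY$.
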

\begin{proof}
By the assumptions there are von Neumann algebras $R_\TX$ and
$R_\TY$ containing $\TX$ and $\TY$, respectively,
and normal completely contractive projections
$P_\TX: R_\TX\to \TX$  and $P_\TY: R_\TY\to \TY$.
The algebraic tensor product $P_\TX\odot P_\TY$ extends uniquely to a
normal map $P_\TX\ot P_\TY$
from $R_\TX\wot R_\TY = R_\TX\ot_F R_\TY $ to $\TX \ot_F \TY$
(see Chapter 7 of \cite{EfR} and Proposition 4.3 of \cite{EfR-kac}).
As $\TX \ot_F \TY\subset R_\TX\wot R_\TY$, the uniqueness of extensions
implies that $P_\TX\ot P_\TY$ is the identity map when restricted to
$\TX \ot_F \TY$. Let $u\in \TX \ot_F \TY$ and let
$(u_i)_{i \in \Ind}$ be a net in the algebraic tensor product
$R_\TX\odot R_\TY$ that converges to $u$ in the weak* topology.
Then
\[
u = (P_\TX\ot P_\TY)(u) =
\mathrm{w\sp *-}\lim_{i \in \Ind} (P_\TX\odot P_\TY)(u_i)\in \TX\wot \TY.
\]
\end{proof}

Consider then a  TRO morphism $\alpha:\TX \to \TY$. It follows from
\cite{Hamana} that $\alpha$ is completely contractive. Moreover,
Proposition 1.1 of \cite{HamanaUnpubl}
implies that if $\TZ$ is another W*-TRO, then the map
$\id_{\TZ} \ot \alpha$ extends uniquely to a completely contraction
from $\TZ \ot_F \TX$ to $\TZ \ot_F \TY$ -- this does not require
that the original map is normal. If $\alpha$ is in addition
normal, the resulting extension is also normal, as follows for example from
the identification of the predual of the Fubini tensor product as the
projective tensor product of the preduals of the individual factors.
So when $\alpha$ is normal, we can view $\id_{\TZ}\ot\alpha$
as a normal TRO morphism from $\TZ  \wot \TX$ to $\TZ \wot \TY$.
If we want to stress that we are working with a not necessarily normal
extension we will write $\id_{\TZ} \ot_F \alpha$.

The celebrated Choi--Effros construction equips a fixed point space of a completely positive map with a C*-algebra structure. Below we present an analogous result for completely contractive maps and TRO structures.

The first proposition is essentially a theorem of Youngson
\cite{Youngson} (see also Theorem 4.4.9 in \cite{BLM}).

\begin{propn} \label{TROconst}
Let $\alg$ be a C*-algebra and let $P:\alg \to \alg$ be a completely contractive projection. Then $\wt{\TX}:=P(\alg)$ possesses a TRO structure, with the product given by the formula
\[ \{a,b,c \}:=P(ab^*c), \qquad a,b,c \in \wt{\TX}.\]
Denote the resulting TRO by $\TX$. Then the identity map $\iota:
\wt{\TX} \to \TX$ is a completely isometric isomorphism
\textup{(}where
$\wt{\TX}$ inherits the operator space structure from $\alg$
and $\TX$ is an operator space as a TRO\textup{)}. If
$\alg$ is a von Neumann algebra and $\wt{\TX}$ happens to be
weak$^*$-closed, then  $\TX$ is a W*-TRO and
$\iota:  \wt{\TX} \to \TX$ is in addition a homeomorphism for
weak$^*$ topologies.
\end{propn}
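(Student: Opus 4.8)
The plan is to read off the C*-algebraic assertions from Youngson's theorem and then bootstrap to the W*-setting via Zettl's abstract characterisation, the only genuine difficulty being the matching of weak* topologies when $P$ is not assumed normal.

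For the first part I would invoke Youngson's theorem as recorded in Theorem 4.4.9 of \cite{BLM}: for the completely contractive projection $P$ there is a complete isometry $J$ of $\wt{\TX}=P(\alg)$ onto a concrete TRO inside some $B(\Hil';\Kil')$ with $J\bigl(P(ab^*c)\bigr)=J(a)J(b)^*J(c)$ for all $a,b,c\in\wt{\TX}$. This single statement does all of the C*-level work at once. Pulling the concrete ternary product back through $J$ shows that $\{a,b,c\}:=P(ab^*c)$ is well defined and, since Zettl's axioms (2)--(4) hold in every concrete TRO and $J$ is isometric, that they hold for $\{\cdot,\cdot,\cdot\}$; axiom (1) is immediate. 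At the algebraic level the content is Youngson's identities $P(P(a)b^*c)=P(aP(b)^*c)=P(ab^*P(c))=P(ab^*c)$ (for $a\in\alg$ and the remaining arguments in $\wt{\TX}$), which permit inserting $P$ into any slot and thereby reduce the associativity axiom (2) to the ordinary associativity of $ab^*c$ in $\alg$. The same complete isometry $J$ exhibits $\iota\colon\wt{\TX}\to\TX$ as completely isometric, since the operator space structure of the abstract TRO $\TX$ is by definition the one carried by such a concrete realisation.

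For the von Neumann-algebraic statement, assume $\alg$ is a von Neumann algebra and $\wt{\TX}$ is weak*-closed. Then $\wt{\TX}$ is a dual Banach space with predual $\alg_*/\wt{\TX}_\perp$ and weak* topology inherited from $\alg$, and as a weak*-closed subspace of a von Neumann algebra it is a dual operator space. Thus $\TX$ is simultaneously an abstract TRO and a dual Banach space, hence an abstract W*-TRO in Zettl's sense, and Zettl's representation theorem realises it completely isometrically as a concrete weak*-closed TRO; this shows that $\TX$ is a W*-TRO.

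The main obstacle is the final assertion that $\iota$ is a weak*-homeomorphism, i.e.\ that the topology inherited from $\alg$ coincides with the intrinsic weak* topology of the W*-TRO $\TX$ (that of its predual $\TX_*$, the restrictions of $(\lX)_*$). This is delicate precisely because $P$ need not be normal, so $P(ab^*c)$ is not manifestly separately weak*-continuous for the $\alg$-topology. I would settle it by uniqueness of the predual: by the first part $\iota$ transports the predual of $\wt{\TX}$ to an operator space predual of $\TX$, and a W*-TRO has a unique predual -- its weak* topology is that of the $(1,2)$-corner $\TX$ of its linking von Neumann algebra $\lX$, whose predual is unique by Sakai's theorem -- so the transported predual must induce the intrinsic topology. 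Hence $\iota$ is weak*-continuous; being a weak*-continuous linear bijection of dual Banach spaces it automatically has weak*-continuous inverse, and is therefore a homeomorphism. (A posteriori, $P(ab^*c)$ is then separately weak*-continuous on $\wt{\TX}$ even though $P$ itself need not be normal.)
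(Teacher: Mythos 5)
Your proposal is correct in outline and follows essentially the same skeleton as the paper: Youngson's theorem supplies the TRO structure together with the (complete) isometry of $\iota$, and the weak$^*$ assertion is reduced to the uniqueness of the predual of a W*-TRO. The only real divergence is in how the complete isometry is obtained. You cite the operator-space form of Youngson's theorem (Theorem 4.4.9 of \cite{BLM}), which the paper itself indicates carries essentially the full statement, so this is legitimate; the paper instead starts from Youngson's original, merely isometric, theorem and upgrades it by hand, applying the same construction to the amplifications $P^{(n)}:M_n(\alg)\to M_n(\alg)$ and invoking Proposition 2.1 of \cite{Hamana} to see that the natural identification of $M_n(\wt{\TX})$ with $M_n(\TX)$ is isometric. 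Your route is shorter, the paper's is more self-contained; both are sound.

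The one step you should repair is the parenthetical justification of predual uniqueness. The fact you need -- that a TRO which is a dual Banach space has a unique predual, equivalently a unique weak$^*$ topology -- is true, and it is precisely what the paper cites (Proposition 2.4 of \cite{effros-ozawa-ruan}). But your derivation of it from Sakai's theorem applied to the linking von Neumann algebra does not work as stated: the linking algebra $\lX$ is only available once a concrete weak$^*$-closed realisation of $\TX$ has been fixed, and an arbitrary Banach-space predual of $\TX$ -- in particular the one transported by $\iota$ from $\alg_*/\wt{\TX}_\perp$, for which separate weak$^*$-continuity of the ternary product is exactly what is in question -- gives no a priori way to manufacture a predual of $\lX$ restricting to it. So Sakai's uniqueness for $\lX$ cannot be brought to bear directly, and taken literally this step is circular. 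Replacing the gloss by a citation of Proposition 2.4 of \cite{effros-ozawa-ruan} (or of the Barton--Timoney theorem on uniqueness of preduals of JBW*-triples) fixes this, and the remainder of your argument goes through; note that once the transported predual is identified with the canonical one, $\iota$ is already a weak$^*$-homeomorphism, so your final observation about weak$^*$-continuous bijections of dual spaces, while true, is not needed.
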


\begin{proof}
The fact that the displayed formula defines a TRO structure (with
the norm induced from $\alg$) is the Theorem of \cite{Youngson}
(p.\ 508) -- it follows also from the abstract description due to
Zettl mentioned earlier. The map $\iota$ is thus an isometry.
Applying the same construction to $P^{(n)}:M_n(\alg)\to M_n(\alg)$
gives a TRO based on $M_n(\wt\TX)$ and this TRO is naturally
isomorphic to $M_n(\TX)$.
Proposition 2.1 of \cite{Hamana} implies that this isomorphism is an
isometry. Thus $\iota$ is in fact a complete isometry.
The second part follows from the uniqueness of a predual of a W*-TRO
(Proposition 2.4 of \cite{effros-ozawa-ruan}).
\end{proof}

\begin{theorem} \label{CE_TRO}
Let $\mlg$ be a von Neumann algebra and let $P:\mlg \to \mlg$ be a completely contractive normal map. Consider the space $\Fix P = \{x\in \mlg: Px=x\}$. Then $\Fix P$ is a weak$^*$-closed subspace of $\mlg$, so in particular a dual operator space. It  possesses a unique ternary product which makes it a W*-TRO. It is explicitly given by the
formula
\[  \{a,b,c \}:= \wt{P}_{\beta} (ab^*c), \qquad a,b,c \in \Fix P,\]
where $\beta$ is a fixed free ultrafilter,
\[ \wt{P}_{\beta} (x) = \beta- \lim_{n\in \bn} \frac{1}{n} \sum_{k=0}^{n-1} P^k (x), \qquad x \in \mlg, \]
and the last limit is understood in a weak$^*$ topology.
\end{theorem}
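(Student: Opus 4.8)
The plan is to realise $\Fix P$ as the range of a single completely contractive \emph{idempotent} and then quote Proposition~\ref{TROconst}. The weak$^*$-closedness of $\Fix P$ comes for free: since $P$ is normal, $\id_{\mlg}-P$ is weak$^*$-continuous, so $\Fix P=\ker(\id_{\mlg}-P)$ is weak$^*$-closed and inherits a dual operator space structure from $\mlg$. The whole point is then to produce a completely contractive projection of $\mlg$ onto $\Fix P$, for which the ergodic averaging in the statement is the natural candidate.

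Next I would analyse the Ces\`aro averages $A_n:=\frac1n\sum_{k=0}^{n-1}P^k$. For fixed $x\in\mlg$ the net $(A_n(x))_n$ stays in the weak$^*$-compact ball of radius $\|x\|$, so $\wt{P}_{\beta}(x):=\beta\text{-}\lim_n A_n(x)$ exists along the free ultrafilter $\beta$, and linearity in $x$ is immediate. Each $A_n$ is a convex combination of the completely contractive maps $P^k$, hence completely contractive; since at every matrix level the ball of radius $\|[x_{ij}]\|$ is weak$^*$-closed and $[A_n(x_{ij})]$ lies in it, the pointwise weak$^*$-limit $\wt{P}_{\beta}$ is again completely contractive.

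I would then check that $\wt{P}_{\beta}$ is an idempotent with range exactly $\Fix P$. On the one hand $A_n$ fixes $\Fix P$ pointwise (if $Px=x$ then $A_n(x)=x$), so $\wt{P}_{\beta}$ restricts to the identity there. On the other hand the telescoping identity $PA_n-A_n=\tfrac1n(P^n-\id_{\mlg})$ gives $\|PA_n(x)-A_n(x)\|\le\tfrac2n\|x\|\to0$, and because normality lets $P$ commute with the ultrafilter weak$^*$-limit (checked by pairing with normal functionals, using $P_*$), one gets $P(\wt{P}_{\beta}(x))=\wt{P}_{\beta}(x)$, i.e.\ $\wt{P}_{\beta}(x)\in\Fix P$. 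Together these give $\wt{P}_{\beta}^2=\wt{P}_{\beta}$ and $\wt{P}_{\beta}(\mlg)=\Fix P$. Thus $\wt{P}_{\beta}$ is a completely contractive projection of the von Neumann algebra $\mlg$ onto the weak$^*$-closed space $\Fix P$, and Proposition~\ref{TROconst} applies directly -- crucially it asks only that the \emph{range} be weak$^*$-closed, not that the projection itself be normal -- endowing $\Fix P$ with a W*-TRO structure whose triple product is $\{a,b,c\}=\wt{P}_{\beta}(ab^*c)$, exactly as claimed.

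For the uniqueness clause I would invoke the rigidity of the ternary product: any ternary operation turning the fixed operator space $\Fix P$ into a W*-TRO has the \emph{same} underlying operator space structure (the one inherited from $\mlg$), and the ternary product of a TRO is determined by its operator space structure, as follows from the uniqueness of Hamana's triple envelope (see \cite{Hamana} and \cite{BLM}). Hence the identity map between any two such structures is a TRO isomorphism and the products coincide; in particular the product is independent of the choice of free ultrafilter $\beta$. The step I expect to be the main obstacle is precisely the interchange of $P$ with the ultrafilter weak$^*$-limit in the projection argument, where normality of $P$ is indispensable; the uniqueness clause also requires some care, since it rests on the genuine rigidity of W*-TRO products rather than on the norm axioms alone.
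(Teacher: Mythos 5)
Your proposal is correct and follows essentially the same route as the paper: it verifies that $\wt{P}_{\beta}$ is a completely contractive projection onto $\Fix P$ (the paper dismisses this as ``standard Ces\`aro limit arguments'' plus weak$^*$-limits of contractions, which your telescoping and normality argument fills in), then applies Proposition~\ref{TROconst}, and derives uniqueness from Hamana's rigidity of triple products, exactly as the paper does via Proposition~2.1 of \cite{Hamana}. The only difference is level of detail, not substance.
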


\begin{proof}
To show the existence of the ternary product described above it
suffices to verify that $P_{\beta}:\mlg \to \mlg$ is a completely
contractive projection onto $\Fix P$. The fact that $P_{\beta}$ is a
projection onto $\Fix P$ follows by standard Ces\`aro limit arguments;
the (complete) contractivity of $P_{\beta}$ follows from the analogous
property of $P$ and the fact that a weak$^*$ limit of contractions is
a contraction. The uniqueness of the ternary product follows once
again from Proposition 2.1 of \cite{Hamana}.
\end{proof}

\begin{remark}
Let us stress that Proposition \ref{TROconst} implies in particular
that the W*-TRO structure of  $\Fix P$ does not depend on the
choice of the ultrafilter in the above proof (although the map
$\wt{P}_{\beta}$ may well do).
\end{remark}

The following result is an abstract extension of Proposition 3.3.1 of \cite{Chu-Lau}, the proof is essentially the same.

\begin{propn} \label{uniqueifnormal}
Suppose that the assumptions of Theorem \ref{CE_TRO} hold.
If there exists a normal
\textup{(}i.e.\ weak$^*$--weak$^*$-continuous\textup{)}
projection $Q:\mlg \to \Fix P$ such that $Q \circ P = P\circ Q$,
then $\wt{P}_{\beta} = Q$ for any free ultrafilter $\beta$  .
\end{propn}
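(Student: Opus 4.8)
The plan is to prove that $\wt{P}_{\beta}(x) = Q(x)$ for every $x\in \mlg$ by computing $Q\bigl(\wt{P}_{\beta}(x)\bigr)$ in two different ways and comparing the results. Write $P_n = \tfrac{1}{n}\sum_{k=0}^{n-1} P^k$ for the Ces\`aro averages, so that $\wt{P}_{\beta}(x) = \beta\text{-}\lim_{n} P_n(x)$; this weak$^*$ limit exists because the sequence $(P_n(x))_n$ stays in the weak$^*$-compact ball of radius $\|x\|$, each $P_n$ being a contraction (as $P$ is completely contractive, hence contractive).

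First I would record the effect of $Q$ on the averages. Since $Q\circ P = P\circ Q$, iteration gives $Q\circ P^k = P^k\circ Q$ for every $k$, and because $Q(x)\in \Fix P$ we have $P^k(Q(x)) = Q(x)$. Averaging over $k$ and using linearity of $Q$, we obtain $Q(P_n(x)) = \tfrac{1}{n}\sum_{k=0}^{n-1} P^k(Q(x)) = Q(x)$ for every $n$; in particular the sequence $\bigl(Q(P_n(x))\bigr)_n$ is constant and equal to $Q(x)$.

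Next I would push $Q$ through the ultrafilter limit. As $Q$ is normal, it is weak$^*$-continuous, and continuous maps commute with ultrafilter limits, so $Q\bigl(\wt{P}_{\beta}(x)\bigr) = \beta\text{-}\lim_n Q(P_n(x)) = Q(x)$ by the previous step. On the other hand, by the proof of Theorem \ref{CE_TRO} the map $\wt{P}_{\beta}$ is a projection onto $\Fix P$, so $\wt{P}_{\beta}(x)\in \Fix P$; since $\Fix P$ is exactly the range of the idempotent $Q$, we also have $Q\bigl(\wt{P}_{\beta}(x)\bigr) = \wt{P}_{\beta}(x)$. Comparing the two computed values of $Q\bigl(\wt{P}_{\beta}(x)\bigr)$ yields $\wt{P}_{\beta}(x) = Q(x)$, as desired; independence from $\beta$ is then automatic, since $Q$ does not depend on $\beta$.

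The only delicate point is the interchange of $Q$ with the ultrafilter limit in the third paragraph, and this is precisely where the normality hypothesis on $Q$ is essential. Weak$^*$-continuity of $Q$ guarantees that for any weak$^*$-neighbourhood $V$ of $Q(y)$ with $y=\wt{P}_{\beta}(x)$, the preimage $Q^{-1}(V)$ is a weak$^*$-neighbourhood of $y$, so $\{n : P_n(x)\in Q^{-1}(V)\}=\{n : Q(P_n(x))\in V\}\in\beta$, which is exactly the statement that $Q\bigl(\beta\text{-}\lim_n P_n(x)\bigr)=\beta\text{-}\lim_n Q(P_n(x))$. Everything else is routine bookkeeping with the commutation relation and the projection property, so I expect no further obstacle.
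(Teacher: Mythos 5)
Your proof is correct and follows essentially the same route as the paper's: both arguments evaluate $Q\bigl(\wt{P}_{\beta}(x)\bigr)$ using (a) the fact that $Q$ fixes its range $\Fix P$, which contains $\wt{P}_{\beta}(x)$, (b) normality of $Q$ to pull it through the weak$^*$ ultrafilter limit, and (c) the commutation $Q\circ P^k = P^k\circ Q$ together with $Qx\in\Fix P$ to see that $Q(P_n(x))=Qx$ for all $n$. Your explicit justification of the interchange of $Q$ with the $\beta$-limit is a welcome detail the paper leaves implicit, but the argument is the same.
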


\begin{proof}
Recall that a normal projection is necessarily bounded. Thus we have for each $x \in \mlg$ (and a free ultrafilter $\beta$)
\begin{align*} \wt{P}_{\beta}(x) &= Q \wt{P}_{\beta}(x) =  Q (\beta{-} \lim_{n\in \bn} \frac{1}{n} \sum_{k=0}^{n-1} P^k (x)) \\&=  \beta{-} \lim_{n\in \bn} Q(\frac{1}{n} \sum_{k=0}^{n-1} P^k (x)) =
\beta{-}\lim_{n\in \bn} \frac{1}{n} \sum_{k=0}^{n-1} P^k (Qx) = Qx.\end{align*}
\end{proof}

\section{Actions of locally compact groups on W*-TROs and resulting
  crossed products}

In this section we discuss actions of locally compact groups on
W$^*$-TROs and the associated crossed products.
Analogous study in the operator space context was undertaken
in \cite{HamanaUnpubl}; we will comment on some specific analogies at
the end of this section.

\begin{deft} \label{def:group-act}
Let $G$ be a locally compact group, and let $\TX$ be a W*-TRO.
Denote by $\Aut(\TX)$ the set of all normal automorphisms of $\TX$,
i.e.\ normal bijective TRO morphisms from $\TX$ onto itself.
A (continuous) \emph{action} of $G$ on $\TX$ is a homomorphism
$\alpha:G \to \Aut(\TX)$ such that for each $x\in \TX$
the map $\alpha^x: G \to \TX$ defined by
\[ \alpha^x(s) = (\alpha (s)) (x),\qquad s \in G,\]
is weak$^*$-continuous.
We shall write $\alpha_s = \alpha(s)$ for $s\in G$.
\end{deft}

The continuity condition above has several equivalent formulations
which can be deduced from Sections 13.4 and 13.5 of
\cite{Stratila}. We record one of them in the following proposition.

\begin{propn} \label{propn:action-cts}
Let $G$ be a locally compact group, and let $\TX$ be a W*-TRO. A
homomorphism $\alpha:G \to \Aut(\TX)$  is a continuous action of $G$ on
$\TX$  if and only if the map
$G \times \TX_* \ni (s,\varphi) \mapsto \varphi \circ \alpha_s \in\TX_*$
is norm-continuous.
\end{propn}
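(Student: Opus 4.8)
The plan is to prove the equivalence by passing through the linking von Neumann algebra $\lX$, where analogous continuity results for W*-algebra actions are available (and where the relevant statements of \cite{Stratila} live). The key observation is that an action $\alpha$ of $G$ on the W*-TRO $\TX$ induces, via the Hamana extension of Proposition \ref{extend}, a family of normal $^*$-automorphisms $\beta_s \colon \lX \to \lX$; since the extension construction preserves composition and sends $\id$ to $\id$, the map $s \mapsto \beta_s$ is again a homomorphism from $G$ into $\Aut(\lX)$ (the automorphism group of the von Neumann algebra $\lX$). I would first check that $\alpha$ is a continuous action of $G$ on $\TX$ in the sense of Definition \ref{def:group-act} if and only if $\beta$ is a point-weak$^*$-continuous action of $G$ on $\lX$. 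One direction is immediate, because $\beta_s \bigl(\begin{smallmatrix} 0 & x \\ 0 & 0\end{smallmatrix}\bigr) = \bigl(\begin{smallmatrix} 0 & \alpha_s(x) \\ 0 & 0\end{smallmatrix}\bigr)$, so weak$^*$-continuity of $\beta^X$ on the relevant corner is exactly weak$^*$-continuity of $\alpha^x$. For the converse one uses that the corner $\TX$ and its adjoint generate $\lX$ together with the two corner C*-algebras $\la \TX \TX^* \ra''$ and $\la \TX^* \TX \ra''$, so continuity of $\beta$ on all of $\lX$ follows from continuity on $\TX$ by the multiplicativity of the ternary/$^*$-structure (for instance $\beta_s(xy^*) = \alpha_s(x)\alpha_s(y)^*$) together with the uniform boundedness of the $\beta_s$.

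Next I would invoke the von Neumann algebraic case directly. By Sections 13.4 and 13.5 of \cite{Stratila}, a homomorphism $\beta \colon G \to \Aut(\lX)$ is a point-weak$^*$-continuous action if and only if the map $G \times (\lX)_* \ni (s,\omega) \mapsto \omega \circ \beta_s \in (\lX)_*$ is norm-continuous. This is precisely the statement I want, transported to the algebra level. The remaining task is then purely one of translating this norm-continuity on $(\lX)_*$ back to the corresponding statement on $\TX_*$.

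For that translation I would use the identification $\TX_* = \{\omega|_{\TX} : \omega \in (\lX)_*\}$ recorded in the excerpt, together with the compatibility $\beta_s|_{\TX} = \alpha_s$. Given $\varphi \in \TX_*$, pick $\omega \in (\lX)_*$ restricting to $\varphi$; then $\varphi \circ \alpha_s = (\omega \circ \beta_s)|_{\TX}$, and since restriction $(\lX)_* \to \TX_*$ is a norm-contraction (indeed a quotient map), norm-continuity of $(s,\omega)\mapsto \omega\circ\beta_s$ pushes forward to norm-continuity of $(s,\varphi)\mapsto \varphi\circ\alpha_s$. Conversely, to recover norm-continuity of the $\lX$-map from that of the $\TX$-map, I would exploit that $\lX$ is assembled from $\TX$, $\TX^*$ and the two corner algebras, and that a functional on $\lX$ decomposes into its four matrix entries; norm-continuity of $s \mapsto \varphi\circ\alpha_s$ on each corner combines to give norm-continuity on the whole predual. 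The main obstacle I anticipate is precisely this converse matrix-entry bookkeeping: one must verify that norm-continuity on the off-diagonal corner $\TX$ genuinely forces joint norm-continuity on the diagonal C*-subalgebras $\la \TX \TX^*\ra''$ and $\la \TX^*\TX\ra''$, which is where the density of products $xy^*$ and the uniform boundedness of the automorphisms have to be used carefully, rather than in the superficially symmetric but easier forward direction.
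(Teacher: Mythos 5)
Your reduction to the linking algebra has a genuine gap, and it sits exactly at the step you wave through. To run the argument you must show that point-weak$^*$-continuity of $\alpha$ on $\TX$ forces point-weak$^*$-continuity of the extended homomorphism $\beta$ on all of $\lX$, and you justify this by "multiplicativity (e.g.\ $\beta_s(xy^*)=\alpha_s(x)\alpha_s(y)^*$) together with uniform boundedness of the $\beta_s$". This fails twice. First, multiplication is not jointly weak$^*$-continuous, even on bounded sets: in $B(\ell^2)$ the powers $S^n$ of the unilateral shift and their adjoints both tend to $0$ in the weak$^*$ topology, yet $S^{*n}S^n=I$ for all $n$; so weak$^*$-continuity of $s\mapsto\alpha_s(x)$ and of $s\mapsto\alpha_s(y)$ does not yield weak$^*$-continuity of $s\mapsto\alpha_s(x)\alpha_s(y)^*$. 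Second, even granting continuity on the linear span of such products, that span is only weak$^*$-dense in $\la \TX\TX^*\ra''$ (it is norm-dense merely in the C*-algebra $\la \TX\TX^*\ra$), whereas uniform boundedness of the $\beta_s$ only makes the set of elements with weak$^*$-continuous orbit map \emph{norm}-closed, not weak$^*$-closed; so the density argument does not close. The same two problems resurface in what you call the "matrix-entry bookkeeping" for the preduals, so the obstacle you flag at the end is not a technicality but the actual mathematical content, and the tools you propose for it are insufficient.

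The deeper structural issue is that the statement you need in your step (a) is precisely Theorem \ref{groupaction} of the paper, and the paper proves that theorem \emph{from} Proposition \ref{propn:action-cts}: for $z\in\la\TX\TX^*\ra''$ one writes $\beta_s(z)\alpha_s(\alpha_{s^{-1}}(x))=\alpha_s(z\alpha_{s^{-1}}(x))$ and controls the difference using the joint norm-continuity of $(s,\varphi)\mapsto\varphi\circ\alpha_s$ on $G\times\TX_*$ — i.e.\ using the very proposition being proved. So your route runs the paper's logic backwards, and the backward step is the unproven (hard) one. The intended proof needs no linking algebra at all: the equivalences in Sections 13.4 and 13.5 of \cite{Stratila}, though stated for von Neumann algebras, are Banach-space arguments, and they apply verbatim to $\TX$. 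Concretely: normal TRO automorphisms are isometric; since $(\TX_*)^*=\TX$, point-weak$^*$-continuity of $\alpha$ says each orbit map $s\mapsto\varphi\circ\alpha_s$ is weakly continuous in $\TX_*$; a uniformly bounded, weakly continuous representation of a locally compact group on a Banach space is point-norm continuous (vector-valued integration against $C_c(G)$ plus an approximate-identity argument); and the estimate $\|\varphi'\circ\alpha_{s'}-\varphi\circ\alpha_s\|\le\|\varphi'-\varphi\|+\|\varphi\circ\alpha_{s'}-\varphi\circ\alpha_s\|$, valid because each $\alpha_{s'}$ is isometric, upgrades this to joint norm-continuity. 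The easy direction of the proposition is, as you note, immediate by evaluating at fixed $x\in\TX$.
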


We are ready to connect the action of $G$ on $\TX$ with the action on $\lX$.

\begin{tw}\label{groupaction}
Let $\alpha$ be an action of a locally compact group $G$ on a W*-TRO $\TX$. Then it possesses a unique extension to an action of $G$ on $\lX$.
\end{tw}
\begin{proof}
First fix $g \in G$ and extend $\alpha_g \in \Aut (\TX)$ to  a normal automorphism $\beta_g \in \Aut(\lX)$ via Proposition \ref{extend}.

The uniqueness of the extensions implies that the resulting family
$\{\beta_g:g \in G\}$ defines a homomorphism $\beta:G \to
\Aut(\lX)$. It remains to check that it satisfies the continuity
requirement. We do it separately for each corner of the map $\beta$,
presenting the argument only for the upper left corner.

Take $z\in \la \TX \TX^* \ra ''$  and consider the map
$\beta^z:G \to \lX$. We need to show it is weak$^*$-continuous. As all
the maps in question are contractive and we may assume that
$\TX$ is non-degenerately represented in $B(\Hil;\Kil)$, it suffices
to check that for all $\xi \in \Kil$, $x \in \TX$, $\eta \in \Hil$,
and a net of elements $(s_i)_{i\in \Ind}$
of $G$ converging to $e\in G$, we have
\[ \la \xi, \beta_{s_i}(z) x \eta \rangle \stackrel{i\in \Ind}{\longrightarrow} \la \xi, zx \eta \ra.\]
Note that, by \eqref{betaalpha},
\begin{align*}
\la \xi, \beta_{s_i}(z) x \eta \rangle  = \la \xi, \beta_{s_i}(z) \alpha_{s_i} (\alpha_{s_i^{-1}} (x)) \eta \rangle =
\la \xi,  \alpha_{s_i} (z\alpha_{s_i^{-1}} (x)) \eta \rangle = \omega_{\xi,\eta} \circ \alpha_{s_i} \left( z\alpha_{s_i^{-1}} (x) \right),
\end{align*}
so putting $\omega:=\omega_{\xi, \eta}$ we obtain
\begin{align*}
\la \xi, \beta_{s_i}(z) x \eta \rangle  - \la \xi, zx \eta \ra =
(\omega \circ \alpha_{s_i} - \omega)(z\alpha_{s_i^{-1}} (x)) +
\omega   \left(z\alpha_{s_i^{-1}} (x) -zx \right).
\end{align*}
Applying Proposition~\ref{propn:action-cts}, we see that the upper
left corner of $\beta^z$ is weak*-continuous.
The remaining parts of the proof follow analogously.
\end{proof}

For an action $\alpha$ of $G$ on a W*-TRO $\TX$ we define the fixed point space $\Fix {\alpha}$ as
\[ \Fix{\alpha}=\{x\in \TX: \forall_{g \in G}\, \alpha_g(x) = x\}.\]
It is clear that $\Fix{\alpha}$ is a W*-sub-TRO of $\TX$.

\begin{cor} \label{fixedcorner}
Assume that $\TX$ is a W*-TRO that is non-degenerately represented
in some $B(\Hil;\Kil)$, $\alpha$ is an action of $G$ on $\TX$ and
$\beta$ is an action of $G$ on $\lX$ introduced in Theorem
\ref{groupaction}. Then $\Fix{\alpha} = P_{\Kil} (\Fix{\beta})
P_{\Hil}$.
\end{cor}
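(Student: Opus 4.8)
The plan is to reduce everything to the single structural fact that each extended automorphism $\beta_g$ commutes with compression to the upper-right corner of $\lX$. Write $E(z) = P_{\Kil}\,z\,P_{\Hil}$ for $z \in \lX$; identifying $\TX$ with the upper-right corner of $\lX$, the map $E$ is precisely the canonical projection of $\lX$ onto $\TX$, so the asserted equality reads $\Fix\alpha = E(\Fix\beta)$. Since $\TX$ is non-degenerately represented, $P_{\Kil}$ and $P_{\Hil}$ are the units of the diagonal corners $\la \TX\TX^* \ra''$ and $\la \TX^*\TX \ra''$, and in particular both lie in $\lX$ (their sum being the unit of $B(\Kil \oplus \Hil)$).

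First I would check that $\beta_g(P_{\Kil}) = P_{\Kil}$ and $\beta_g(P_{\Hil}) = P_{\Hil}$ for every $g \in G$. Indeed $\beta_g$ is a normal $*$-automorphism of $\lX$ extending $\alpha_g$, so by normality and $\beta_g(\TX) = \TX$ it carries $\la \TX\TX^* \ra'' = \la \beta_g(\TX)\beta_g(\TX)^* \ra''$ onto itself; restricted to this von Neumann subalgebra it is a $*$-automorphism and hence fixes its unit $P_{\Kil}$. Each $\alpha_g$, being an automorphism, is non-degenerate, so Proposition~\ref{extend}(ii) makes $\beta_g$ unital, whence $\beta_g(P_{\Hil}) = \beta_g(I) - \beta_g(P_{\Kil}) = I - P_{\Kil} = P_{\Hil}$. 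It then follows that, for all $z \in \lX$,
\[ \beta_g(E(z)) = \beta_g(P_{\Kil})\,\beta_g(z)\,\beta_g(P_{\Hil}) = P_{\Kil}\,\beta_g(z)\,P_{\Hil} = E(\beta_g(z)), \]
so that $E$ intertwines $\beta$ with itself; the same conclusion can be read off Corollary~\ref{extendconverse}.

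With this intertwining the two inclusions become formal. For $\Fix\alpha \subseteq E(\Fix\beta)$, given $x \in \Fix\alpha$ I would let $y$ be its image in the upper-right corner of $\lX$; by the defining property of $\beta$ in Proposition~\ref{extend}, $\beta_g(y)$ is the corner embedding of $\alpha_g(x) = x$, so $y \in \Fix\beta$ and $E(y) = x$. Conversely, for $y \in \Fix\beta$ one has $E(y) \in \TX$ and $\beta_g(E(y)) = E(\beta_g(y)) = E(y)$; since $\beta_g$ restricts to $\alpha_g$ on $\TX$, this says $\alpha_g(E(y)) = E(y)$, i.e.\ $E(y) \in \Fix\alpha$. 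The only step demanding any care --- and the main (mild) obstacle --- is the first one, namely that the extended automorphisms respect the corner grading of $\lX$, equivalently that they fix $P_{\Kil}$ and $P_{\Hil}$; this is exactly the compatibility of the Hamana extension with the matrix decomposition of the linking algebra, and once it is secured the rest is a direct computation.
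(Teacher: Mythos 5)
Your proof is correct and follows essentially the same route as the paper's: both inclusions are obtained from the two facts that $\beta_g$ restricts to $\alpha_g$ on $\TX$ and that $\beta_g$ fixes the corner projections $P_{\Kil}$ and $P_{\Hil}$. The only difference is that you actually verify the latter fact (via Proposition \ref{extend}(ii) and the restriction of $\beta_g$ to an automorphism of $\la \TX\TX^*\ra''$), whereas the paper dismisses it as holding ``by construction'' of the Hamana extension.
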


\begin{proof}
Let $x\in \TX$, $g \in G$. If $\alpha_g(x) =x$ then we also have
$\beta_g(x) =x$, and naturally $P_{\Kil} x  P_{\Hil}=x$, which proves
the inclusion `$\subset$' in the desired equality. On the other hand
if $x=P_{\Kil} z P_{\Hil}$ for some $z \in \Fix{\beta}$ then
\begin{align*}
\alpha_g(x) = \beta_g(x) = \beta_g (P_{\Kil} z P_{\Hil}) = \beta_g (P_{\Kil}) \beta_g (z) \beta_g(P_{\Hil}) =  P_{\Kil} z P_{\Hil} =x,
\end{align*}
where we used the fact that $\beta_g$ is a homomorphism and that (by construction) it preserves the projections $P_{\Kil}$ and $P_{\Hil}$.
\end{proof}

We now discuss the connection of pointwise actions defined above
with their integrated incarnations. The interplay between the two
plays a crucial role in \cite{HamanaUnpubl} -- the situation studied
there is however subtler, as the W*-context (as opposed to the
C*-problems studied by Hamana) and presence of linking von Neumann
algebras leads to certain simplifications. Recall that if $G$ is a
locally compact group, then $L^{\infty}(G)$ admits a natural coproduct
(see also Section \ref{LCQG}) $\Com:L^{\infty}(G) \to L^{\infty}(G)
\wot L^{\infty}(G)$, defined via the isomorphism $L^{\infty}(G) \wot
L^{\infty}(G) \cong L^{\infty}(G\times G)$ and the formula
\[ \Com(f)(g,h) = f(gh), \qquad f \in L^{\infty}(G), g, h \in G.\]

\begin{theorem}\label{avatars}
Suppose that $\alpha:G \to \Aut(\TX)$ is an action of a locally
compact  group $G$ on a W*-TRO $\TX$. Then there exists a unique
map $\pi_{\alpha}:\TX\to L^{\infty}(G) \wot \TX$ such that for each
$f\in L^1(G)$, $\phi \in \TX_*$ and $x \in \TX$ we have
\begin{equation}
(f \ot \phi )(\pi_{\alpha}(x)) = \int_G f(g) \phi(\alpha_{g^{-1}}(x))\, dg.
\label{point>int}
\end{equation}
Moreover, if we write $\gamma:=\pi_{\alpha}$, then $\gamma$ is an
injective, normal, non-degenerate TRO morphism such that
\begin{equation}
(\Com  \ot \id_{\TX})\circ \gamma = ( \id_{L^{\infty}(G)} \ot
  \id_{\TX})\circ \gamma. \label{actionequation}
\end{equation}
Conversely, if $\gamma:\TX \to  L^{\infty}(G)  \wot \TX$
is an injective, normal, non-degenerate TRO morphism
satisfying \eqref{actionequation},
then there exists a unique action $\alpha$ of $G$ on $\TX$ such that
$\gamma = \pi_{\alpha}$.
\end{theorem}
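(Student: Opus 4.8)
The plan is to reduce everything to the analogous, classical statement at the level of the linking von Neumann algebra $\lX$, on which $\alpha$ induces a genuine W*-action $\beta$ by Theorem~\ref{groupaction}, and then to transfer the conclusions back to the corner $\TX$ using the Fubini description of Lemma~\ref{lemma:fubini} together with Proposition~\ref{extend} and Corollary~\ref{extendconverse}. For the forward direction I first recall the well-known correspondence between point-actions and coactions of $L^\infty(G)$ on von Neumann algebras (Sections~13.4 and~13.5 of \cite{Stratila}): the action $\beta$ gives rise to a normal, injective, unital $*$-homomorphism $\pi_\beta\colon\lX\to L^\infty(G)\wot\lX$, uniquely determined by $\pi_\beta(z)(g)=\beta_{g^{-1}}(z)$ (equivalently by the integrated formula analogous to \eqref{point>int} tested against normal functionals on $\lX$) and satisfying $(\Com\ot\id)\circ\pi_\beta=(\id\ot\pi_\beta)\circ\pi_\beta$. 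I then set $\gamma:=\pi_\beta|_{\TX}$ and claim that $\gamma=\pi_\alpha$ does the job.

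To see that $\gamma$ takes values in $L^\infty(G)\wot\TX$, I identify the latter with $L^\infty(G)\ot_F\TX$ via Lemma~\ref{lemma:fubini} and check the slice conditions: for $x\in\TX$ and $f\in L^1(G)=L^\infty(G)_*$ the left slice $(f\ot\id)(\pi_\beta(x))=\int_G f(g)\,\alpha_{g^{-1}}(x)\,dg$ is a weak* integral of elements of $\TX$, hence lies in the weak*-closed space $\TX$, while the right slices automatically lie in $L^\infty(G)$ because $\pi_\beta(x)\in L^\infty(G)\wot\lX$. Normality and injectivity of $\gamma$ are inherited from $\pi_\beta$ (cf.\ Proposition~\ref{extend}(i)); since $\pi_\beta$ is a $*$-homomorphism carrying the corner $\TX$ into the corner $\TX$, Corollary~\ref{extendconverse} shows that $\gamma$ is a TRO morphism whose Hamana extension, under the identification $\mathsf{R}_{L^\infty(G)\wot\TX}=L^\infty(G)\wot\lX$, is exactly $\pi_\beta$; as the latter is unital, Proposition~\ref{extend}(ii) gives non-degeneracy of $\gamma$. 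Equation \eqref{actionequation} for $\gamma$ is then the restriction of the coassociativity identity for $\pi_\beta$, and \eqref{point>int} holds by construction. Finally, since the elementary tensors $f\ot\phi$ span a dense subspace of the predual $L^1(G)\widehat{\otimes}\TX_*$ of $L^\infty(G)\wot\TX$, formula \eqref{point>int} pins down $\pi_\alpha=\gamma$ uniquely.

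For the converse, given $\gamma$ as stated I apply Proposition~\ref{extend} to obtain its Hamana extension, a normal injective unital $*$-homomorphism $\delta\colon\lX\to\mathsf{R}_{L^\infty(G)\wot\TX}=L^\infty(G)\wot\lX$. Because the extension construction preserves composition and sends $\Com\ot\id$ and $\id\ot\gamma$ to $\Com\ot\id$ and $\id\ot\delta$, the coaction equation \eqref{actionequation} for $\gamma$ lifts to the same equation for $\delta$; the converse half of the correspondence in \cite{Stratila} then yields a unique W*-action $\beta$ of $G$ on $\lX$ with $\pi_\beta=\delta$. It remains to restrict $\beta$ to $\TX$. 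Since $\delta(x)=\pi_\beta(x)=\gamma(x)\in L^\infty(G)\wot\TX$ for $x\in\TX$, the Fubini slice condition gives $\int_G f(g)\,\beta_{g^{-1}}(x)\,dg\in\TX$ for every $f\in L^1(G)$; combined with the weak*-continuity of the orbit map $g\mapsto\beta_{g^{-1}}(x)$, a separation argument (a normal functional annihilating the weak*-closed $\TX$ but nonzero at some $\beta_{g_0^{-1}}(x)$ would be nonzero on an average with $f$ concentrated near $g_0$) forces $\beta_g(\TX)\subset\TX$ for all $g$. Thus $\alpha_g:=\beta_g|_{\TX}$ is a family of normal TRO automorphisms (Corollary~\ref{extendconverse}), continuous by restriction, i.e.\ an action with $\pi_\alpha=\pi_\beta|_{\TX}=\gamma$; uniqueness of $\alpha$ follows from that of $\beta$ via Theorem~\ref{groupaction}.

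The genuinely TRO-specific difficulty, beyond the bookkeeping of Hamana extensions, lies in this last passage in the converse direction: one must recover a pointwise action $\beta$ from the coaction $\delta$ -- where evaluation at a point of $G$ is not a normal operation, so one is forced to rely on the continuity theory of \cite{Stratila} -- and then show that each $\beta_g$ preserves the corner $\TX$. This is precisely where the weak*-continuity of the orbit maps and the Fubini slice description of Lemma~\ref{lemma:fubini} are indispensable, since membership in $\TX$ can be tested only through the integrated slices and not through pointwise evaluation.
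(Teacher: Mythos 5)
Your proof is correct and its skeleton is the same as the paper's: both directions are reduced, via Theorem~\ref{groupaction} and Proposition~\ref{extend}, to the pointwise/integrated correspondence of Section 18.6 of \cite{Stratila} applied to the induced action $\beta$ on the linking algebra $\lX$, and the crucial corner-preservation steps (that $\pi_{\beta}$ maps $\TX$ into $L^{\infty}(G)\wot\TX$ in the forward direction, and that each $\beta_g$ maps $\TX$ into $\TX$ in the converse, proved by the averaging-plus-weak$^*$-continuity separation argument) are identical in both texts. You deviate in two local steps, both legitimately. First, to prove $\pi_{\beta}(\TX)\subseteq L^{\infty}(G)\wot\TX$ the paper shows that compressing $\pi_{\beta}(x)$ by $I_{L^2(G)}\ot P_{\Kil}$ and $I_{L^2(G)}\ot P_{\Hil}$ changes nothing, using the description of $L^{\infty}(G)\wot\TX$ as the corresponding corner of $L^{\infty}(G)\wot\lX$, whereas you check the Fubini slice conditions and invoke Lemma~\ref{lemma:fubini}; this works, though strictly the Fubini condition quantifies over all $\omega\in B(L^2(G))_*$ rather than over $L^1(G)$ only --- harmless here because $\pi_{\beta}(x)\in L^{\infty}(G)\wot\lX$, so its left slices depend only on $\omega|_{L^{\infty}(G)}$, but this deserves a sentence. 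Second, to lift the coaction identity from $\gamma$ to its Hamana extension, the paper performs the explicit multiplicative computation on elements $xz^*$, $x,z\in\TX$, and concludes by weak$^*$-density, whereas you argue functorially from the fact that Hamana extension preserves composition, together with the identification of the Hamana extensions of $\Com\ot\id_{\TX}$ and $\id_{L^{\infty}(G)}\ot\gamma$ as $\Com\ot\id_{\lX}$ and $\id_{L^{\infty}(G)}\ot\delta$; this is tidier, but those two identifications themselves require a line of justification via the uniqueness clause of Proposition~\ref{extend}, which is precisely what the paper's hands-on computation avoids having to invoke.
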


\begin{proof}
We may assume that $\TX$ is non-degenerately represented in $B(\Hil;\Kil)$.

Assume first that we are given an action $\alpha:G \to \Aut(\TX)$ and
extend it pointwise, using Theorem \ref{groupaction}, to a continuous
action $\beta:G \to \Aut(\lX)$. Discussion in Section 18.6 of
\cite{Stratila} implies that there exists an injective, normal, unital
$^*$-homomorphism $\pi_{\beta}:\lX \to  L^{\infty}(G) \wot \lX$ such
that
for each $f\in L^1(G)$, $\phi \in (\lX)_*$ and $z \in \lX$ we have
\begin{equation}
(f \ot \phi)(\pi_{\beta}(z)) = \int_G f(g) \phi(\beta_{g^{-1}}(z))\,dg
\label{phifM}\end{equation}
and $( \Com \ot \id)\circ \pi_{\beta} = (\id \ot \pi_{\beta})\circ \pi_{\beta}$
(note that our formulas are formally different from Stratila's: the
difference is however only in the fact that we choose to work with
maps taking values in $L^{\infty}(G) \wot \lX$, and not in $  \lX \wot
L^{\infty}(G)$, which allows us to work with the standard coproduct of
$L^{\infty}(G)$, and not with the opposite one as it is in
\cite{Stratila}).
Consider the map $\pi_{\beta}|_{\TX}$. We want to show that it takes
values in $\TY:=L^{\infty}(G)\wot \TX$. Considerations before
Lemma \ref{lemma:fubini} imply that the latter space is a W*-TRO
equal to $( I_{L^{2}(G)} \ot P_{\Kil}) (L^{\infty}(G)\wot  \lX ) (
I_{L^{2}(G)} \ot P_{\Hil})$; moreover $\lY=  L^{\infty}(G) \wot \lX
$. Let then $x \in \TX$. It suffices to show that for any $f\in
L^1(G)$, $\phi \in (\lX)_*$ we have
\[ (f \ot \phi) (\pi_{\beta}(x)) = (f \ot \phi) \bigl(( I_{L^{2}(G)} \ot P_{\Kil}) \pi_{\beta}(x) ( I_{L^{2}(G)} \ot P_{\Hil})\bigr).\]
This however follows immediately from \eqref{phifM} once we note that
$\beta_g(x) = \alpha_g(x)$ for all $g \in G$ and that $y\mapsto
\phi(P_{\Kil} y P_{\Hil})$ is a normal functional on $\lX$.
Thus we showed that $\gamma:=\pi_{\beta}|_{\TX}$ maps $\TX$ into
$\TY$. It is an injective, normal TRO morphism satisfying
\eqref{actionequation} (as a restriction of an injective, normal
$^*$-homomorphism satisfying \eqref{actionequation}). By the
uniqueness of Hamana extensions and the above identification of $\lY$
we deduce that $\pi_{\beta}$ is the Hamana extension of $\gamma$, so
that non-degeneracy of $\gamma$ follows from unitality of
$\pi_{\beta}$ via Proposition \ref{extend}.

Assume now that  $\gamma:\TX \to  L^{\infty}(G) \wot \TX $
is an injective, normal, non-degenerate TRO morphism
satisfying  the action equation \eqref{actionequation}.
Again write $\TY= L^{\infty}(G) \wot \TX $ and let $\pi:\lX\to \lY=
L^{\infty}(G) \wot \lX $ denote the Hamana extension of
$\gamma$. Proposition \ref{extend} implies that $\pi$ is a unital,
injective normal $^*$-homomorphism.  Normality of $\pi$ and $\Com$
implies that it suffices to check the validity of the action equation with $\gamma$ replaced by $\pi$ on a weak$^*$-dense subset; this follows in
turn from the computations of the type ($x,z \in \TX$):
\begin{align*} (\Com \ot \id_{\TX} ) (\pi(xz^*)) &= (\Com \ot \id_{\TX} ) (\gamma(x) \gamma(z)^*) = (\Com \ot \id_{\TX} ) (\gamma(x)) (\Com \ot \id_{\TX} ) (\gamma(z))^* \\&=
 ( \id_{L^{\infty}(G)} \ot \gamma) (\gamma(x)) (\id_{L^{\infty}(G)} \ot \gamma)(\gamma(z))^* = (\id_{L^{\infty}(G)} \ot \pi)(\gamma(x) \gamma(z)^*) \\&=
 (\id_{L^{\infty}(G)} \ot \pi)(\pi(xz^*)). \end{align*}
The Proposition in Section 18.6 of \cite{Stratila} (or rather its
left version) implies that there exists an action $\beta:G \to
\Aut(\lX)$ such that $\pi=\pi_{\beta}$, where $\pi_{\beta}$ is defined
via formula \eqref{phifM}. It remains to show that for each $g \in G$
the map $\alpha_g:=\beta_g|_{\TX}$ takes values in $\TX$, as then it will be
easy to check that the family
$(\alpha_g)_{g\in G}$ defines an action of $G$ on
$\TX$ and that $\gamma$ arises from this action via the formulas given
in the theorem. Fix then $x\in \TX$ and define $z_g=\beta_{g^{-1}}(x)$
for each $g \in G$. Then $z:G \to \lX$ is a weak$^*$-continuous
function and we know that for all $f\in L^1(G)$ and all $\phi \in
(\lX)_*$ such that $\phi(\TX)=\{0\}$ we have
\[ \int_G f(g) \phi(z_g)\, dg =0.\]
But then we deduce immediately that $\phi(z_g)$ is $0$ for almost all $g \in G$, and as it is a continuous function it must actually be $0$ everywhere. This in turn means that $z_g \in \TX$ for all $g \in G$, which ends the proof.
\end{proof}

The next proposition is also familiar from the von Neumann algebraic context.

\begin{propn} \label{pialpha}
Suppose that $\alpha:G \to \Aut(\TX)$ is an action of a locally
compact  group $G$ on a W*-TRO $\TX$ and assume that $\TX$ is
non-degenerately represented in $B(\Hil;\Kil)$. Then the map
$\pi_{\alpha}$ introduced in Theorem \ref{avatars} may be viewed as a
faithful representation of  $\TX$ in $B(L^2(G; \Hil); L^2(G;\Kil))$,
and we have for all $x \in \TX$ and $\zeta \in L^2(G; \Hil)$
\[
(\pi_{\alpha}(x) (\zeta)) (g) = \alpha_{g^{-1}}(x) \zeta(g)
\qquad\text{for almost every } g \in G. \]
\end{propn}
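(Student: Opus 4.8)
The plan is to reduce everything to the linking von Neumann algebra $\lX$, where the corresponding statement is part of the standard theory of W*-dynamical systems. Recall from Theorem \ref{groupaction} that $\alpha$ extends to an action $\beta$ of $G$ on $\lX$, and from the proof of Theorem \ref{avatars} that $\pi_{\alpha} = \pi_{\beta}|_{\TX}$, where $\pi_{\beta}:\lX \to L^{\infty}(G) \wot \lX$ is the normal, injective, unital $^*$-homomorphism determined by \eqref{phifM}. It was already observed there that $L^{\infty}(G)\wot\TX = (I_{L^{2}(G)} \ot P_{\Kil})(L^{\infty}(G)\wot\lX)(I_{L^{2}(G)} \ot P_{\Hil})$ sits inside $B(L^2(G;\Hil);L^2(G;\Kil))$, which is exactly the ambient space in the statement.

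First I would recall the concrete realization of $\pi_{\beta}$ from Section 18.6 of \cite{Stratila}: identifying $L^{\infty}(G) \wot \lX$ with the weak$^*$-measurable essentially bounded $\lX$-valued functions acting as decomposable (multiplication) operators on $L^2(G;\Kil\oplus\Hil)$, the homomorphism $\pi_{\beta}$ is faithfully and normally realized by
\[ (\pi_{\beta}(z)\zeta)(g) = \beta_{g^{-1}}(z)\zeta(g), \qquad z\in\lX,\ \zeta\in L^2(G;\Kil\oplus\Hil), \]
for almost every $g$. I would then check that this is consistent with \eqref{phifM}: pairing the multiplication operator with symbol $g\mapsto \beta_{g^{-1}}(z)$ against an elementary functional $f\ot\phi$ gives exactly $\int_G f(g)\phi(\beta_{g^{-1}}(z))\,dg$, which is the defining formula.

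Next I would restrict to the corner. Since the extended action $\beta_g$ fixes the projections $P_{\Kil}$ and $P_{\Hil}$ (as used in the proof of Corollary \ref{fixedcorner}), the realization above forces $\pi_{\beta}(P_{\Kil}) = I_{L^{2}(G)}\ot P_{\Kil}$ and $\pi_{\beta}(P_{\Hil}) = I_{L^{2}(G)}\ot P_{\Hil}$, the orthogonal projections of $L^2(G;\Kil\oplus\Hil) = L^2(G;\Kil)\oplus L^2(G;\Hil)$ onto the two summands. For $x\in\TX = P_{\Kil}\lX P_{\Hil}$ we then have $\pi_{\alpha}(x) = \pi_{\beta}(x) = \pi_{\beta}(P_{\Kil})\pi_{\beta}(x)\pi_{\beta}(P_{\Hil})$, which therefore maps $L^2(G;\Hil)$ into $L^2(G;\Kil)$; reading off the displayed formula together with $\beta_g|_{\TX} = \alpha_g$ yields $(\pi_{\alpha}(x)\zeta)(g) = \alpha_{g^{-1}}(x)\zeta(g)$ for $\zeta\in L^2(G;\Hil)$ and almost every $g$. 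Faithfulness of the resulting representation of $\TX$ on $B(L^2(G;\Hil);L^2(G;\Kil))$ is immediate from the injectivity of $\pi_{\alpha}$ established in Theorem \ref{avatars}, equivalently from the faithfulness of $\pi_{\beta}$.

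The main obstacle I expect is not conceptual but a matter of care with the identifications: making precise the realization of $L^{\infty}(G)\wot\lX$ as decomposable operators on $L^2(G;\Kil\oplus\Hil)$ and confirming that it matches \eqref{phifM} exactly, including the appearance of $g^{-1}$ (which stems from our use of the standard rather than the opposite coproduct on $L^{\infty}(G)$, as noted after \eqref{phifM}) and the almost-everywhere measurability bookkeeping for a general, possibly non-separable, locally compact group. All of this is handled by leaning on the cited results of \cite{Stratila} rather than reproving them.
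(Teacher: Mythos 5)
Your proposal is correct, but it takes a genuinely different route from the paper's own proof. You reduce everything to the linking von Neumann algebra: you invoke Str\u{a}til\u{a}'s concrete realization of the integrated form $\pi_{\beta}$ of the extended action $\beta$ on $\lX$ as the decomposable operator with symbol $g \mapsto \beta_{g^{-1}}(z)$, check that this realization satisfies \eqref{phifM} (which pins $\pi_{\beta}$ down uniquely, since functionals of the form $f \ot \phi$ span a norm-dense subspace of the predual of $L^{\infty}(G) \wot \lX$), and then restrict to the corner using the $\beta_g$-invariance of $P_{\Kil}$ and $P_{\Hil}$, exactly as in Corollary \ref{fixedcorner}. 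The paper instead stays entirely at the TRO level: it computes the matrix coefficients $\la h \ot \eta, \pi_{\alpha}(x)(f\ot\xi)\ra$ for $f,h \in L^2(G)$, $\xi \in \Hil$, $\eta \in \Kil$ directly from the defining formula \eqref{point>int}, applied with the $L^1$-function $f\conj{h}$ and the vector functional $\omega_{\eta,\xi}$, and concludes by density of elementary tensors. The paper's computation is shorter and more self-contained: it needs neither the identification of $L^{\infty}(G)\wot\lX$ with essentially bounded weak$^*$-measurable $\lX$-valued functions (the measure-theoretic point you rightly flag as the delicate part of your argument) nor any corner bookkeeping. What your argument buys in exchange is conceptual clarity: it exhibits the proposition as nothing more than the corner restriction of the standard von Neumann algebraic crossed-product picture, which is precisely the strategy the paper itself deploys elsewhere (Propositions \ref{crossedcorner} and \ref{crossedfixed}), and it yields the analogous formula for $\pi_{\beta}$ on all of $\lX$ as a by-product.
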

\begin{proof}
The fact that $\pi_{\alpha}$ can be viewed as a faithful representation
of $\TX$ in $B(L^2(G; \Hil); L^2(G;\Kil))$ follows from Theorem
\ref{avatars}.

It remains to prove the displayed formula.
We identify $L^2(G; \Hil)$ with $L^2(G) \ot \Hil$
and $L^2(G; \Kil)$ with $ L^2(G) \ot \Kil$, and let $\xi\in \Hil$,
$\eta\in \Kil$ and $f,h\in\ltwo(G)$.
Then
\[
\la h \ot \eta, \pi_{\alpha}(x)(f \ot \xi) \ra
= \int \la  h(g) \eta, \bigl(\pi_{\alpha}(x)(f \ot \xi)\bigr)(g)  \ra \, dg.
\]
By Theorem~\ref{avatars}, the left-hand side of the previous identity
is equal to
\begin{align*}
\int f(g)\conj{h(g)}\la \eta , \alpha_{g\inv}(x)\xi \ra\, dg
= \int  \la h(g) \eta , f(g)\alpha_{g\inv}(x)\xi  \ra\, dg.
\end{align*}
Then the displayed formula follows by density.
\end{proof}

In the next lemma we show how implemented actions of $G$ on W*-TROs look like.

\begin{lem}
Assume that $\TX$ is a concrete W*-TRO in $B(\Hil;\Kil)$, that $\sigma:G \to B(\Hil)$, $\tau:G \to B(\Kil)$ are so-continuous representations of $G$ and
that for each $g\in G$ and $x \in \TX$ the operator $\tau_g x \sigma_{g}^*$ belongs to $\TX$. Then the map $\alpha: G \to \Aut(\TX)$ defined by
\[\alpha_g(x) = \tau_g x \sigma_{g}^* , \qquad g \in G, x \in \TX,\]
is an action of $G$ on $\TX$.
\end{lem}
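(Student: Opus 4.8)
The plan is to verify in turn the three defining requirements of an action from Definition~\ref{def:group-act}: that each $\alpha_g$ is a normal automorphism of $\TX$, that $g\mapsto\alpha_g$ is a group homomorphism, and that the continuity condition holds. Throughout I use that $\sigma$ and $\tau$ are unitary representations, so that $\sigma_g^*=\sigma_{g^{-1}}=\sigma_g^{-1}$ and likewise for $\tau$; this is what makes the formula $\alpha_g(x)=\tau_g x\sigma_g^*$ land in the TRO category.

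First I would check that each $\alpha_g$ is a TRO morphism. A direct computation using unitarity gives, for $a,b,c\in\TX$,
\[ \alpha_g(a)\,\alpha_g(b)^*\,\alpha_g(c) = \tau_g a \sigma_g^*\,\sigma_g b^* \tau_g^*\,\tau_g c \sigma_g^* = \tau_g(ab^*c)\sigma_g^* = \alpha_g(ab^*c),\]
so $\alpha_g$ preserves the ternary product, and it is obviously linear. By hypothesis $\alpha_g$ maps $\TX$ into $\TX$, and the same applies to $\alpha_{g^{-1}}$; since $\alpha_{g^{-1}}\circ\alpha_g$ and $\alpha_g\circ\alpha_{g^{-1}}$ both act as the identity on $B(\Hil;\Kil)$ (again by unitarity), each $\alpha_g$ is a bijection of $\TX$ onto itself with inverse $\alpha_{g^{-1}}$. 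Normality is immediate: $T\mapsto\tau_g T\sigma_g^*$ is the composition of left multiplication by the fixed operator $\tau_g$ and right multiplication by the fixed operator $\sigma_g^*$, both weak$^*$-continuous on $B(\Hil;\Kil)$, and $\TX$ is weak$^*$-closed. Hence $\alpha_g\in\Aut(\TX)$. The homomorphism property then follows from the representation identities, since $\alpha_g(\alpha_h(x))=\tau_g\tau_h x\sigma_h^*\sigma_g^*=\tau_{gh}x\sigma_{gh}^*=\alpha_{gh}(x)$.

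It remains to establish continuity, which is the only point requiring an argument. Since every $\alpha_g$ is isometric (as $\|\tau_g x\sigma_g^*\|=\|x\|$), the range of $\alpha^x$ is norm-bounded, and on bounded subsets of $B(\Hil;\Kil)$ the weak$^*$ topology agrees with the weak operator topology. Thus it suffices to prove that $g\mapsto\langle\eta,\tau_g x\sigma_g^*\xi\rangle$ is continuous for each $\xi\in\Hil$ and $\eta\in\Kil$. Rewriting this matrix coefficient as $\langle\tau_{g^{-1}}\eta,\,x\,\sigma_{g^{-1}}\xi\rangle$ and using that inversion is continuous on $G$ together with the strong continuity of $\sigma$ and $\tau$, we see that $\tau_{g^{-1}}\eta\to\tau_{g_0^{-1}}\eta$ and $\sigma_{g^{-1}}\xi\to\sigma_{g_0^{-1}}\xi$ in norm as $g\to g_0$; joint continuity of the inner product (with $x$ fixed and bounded) then yields the claim. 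This shows $\alpha^x$ is weak$^*$-continuous, so $\alpha$ is indeed an action of $G$ on $\TX$.

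The main (and essentially the only) obstacle is this continuity step: one must pass from the strong continuity of the representations to weak$^*$-continuity of $\alpha^x$, and the key simplification is that the maps $\alpha_g$ are uniformly bounded, indeed isometric, so that the weak$^*$ and weak operator topologies coincide on the relevant bounded set. Everything else reduces to routine consequences of the unitarity of $\sigma$ and $\tau$.
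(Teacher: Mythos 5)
Your proof is correct and follows exactly the route the paper takes: the paper's own proof merely states that one checks each $\alpha_g$ is a normal TRO automorphism, that $\alpha$ is a homomorphism, and that the continuity condition holds, calling these ``straightforward checks.'' Your write-up fills in those checks correctly, including the one point that genuinely needs an argument --- reducing weak$^*$-continuity of $\alpha^x$ to weak operator continuity on bounded sets and then using strong continuity of the unitary representations $\sigma$ and $\tau$.
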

\begin{proof}
Straightforward checks: we first observe that $\alpha_g$ is indeed a
normal TRO automorphism of $\TX$ and then verify $\alpha$ is a
homomorphism and that the continuity conditions are satisfied.
\end{proof}

In fact all actions of groups on TROs can be put in this form, at the
cost of extending of the TRO in question. This is analogous to the
crossed product construction for the actions of groups on von Neumann
subalgebras.

\begin{lem}\label{equivcrprod}
Let $\alpha: G \to \Aut(\TX)$  be an action of a locally compact group $G$ on a W*-TRO $\TX$. Assume that $\TX$ is concretely represented as a W*-sub-TRO of $B(\Hil;\Kil)$. Let $\pi:=\pi_{\alpha}:\TX\to B(L^2(G) \ot \Hil; L^2(G) \ot \Kil)$ be the representation of $\TX$ introduced Proposition \ref{pialpha}
and let $\tau=\lambda \ot I_{\Kil}$, $\sigma = \lambda \ot I_{\Hil}$ denote the respective amplifications of the left regular representation of $G$.
Then the space
\begin{equation}
 \mathrm{w\sp *-}\textup{cl }  \textup{Lin} \{(\VN(G) \ot  I_{\Kil})\pi(\TX)  \} \label{crossedform} \end{equation}
is equal to
\begin{align*}
\mathrm{w\sp *-}\textup{cl } \textup{Lin} \{\tau_g \pi(x) : g \in G, x \in
\TX\}
&=\mathrm{w\sp *-}\textup{cl }  \textup{Lin} \{\tau_g \pi(x) \sigma_{g'}: g,g' \in G\}\\
&=\mathrm{w\sp *-}\textup{cl } \textup{Lin} \{ \pi(x) \sigma_{g}: g \in
G, x \in \TX\}
\end{align*}
and is a W*-TRO. Moreover, we have the following formula:
\begin{equation} \pi(\alpha_g(x)) =
\tau_g \pi(x) \sigma_{g}^*,\qquad g \in G, x \in \TX.
\label{implementedalpha} \end{equation}
\end{lem}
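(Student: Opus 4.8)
The plan is to base everything on the covariance identity \eqref{implementedalpha}, which I would establish first and then use as the engine for all the remaining assertions. I would prove $\pi(\alpha_g(x)) = \tau_g \pi(x) \sigma_g^*$ by a direct computation from the explicit formula $(\pi(x)\zeta)(h) = \alpha_{h^{-1}}(x)\zeta(h)$ of Proposition~\ref{pialpha}, together with $(\tau_g\psi)(h) = \psi(g^{-1}h)$ and $(\sigma_g^*\zeta)(h) = \zeta(gh)$. Applying both sides to $\zeta \in L^2(G;\Hil)$ and evaluating at $h$, the right-hand side gives $\alpha_{(g^{-1}h)^{-1}}(x)\zeta(h) = \alpha_{h^{-1}g}(x)\zeta(h)$, while the left-hand side gives $\alpha_{h^{-1}}(\alpha_g(x))\zeta(h) = \alpha_{h^{-1}g}(x)\zeta(h)$ since $\alpha$ is a homomorphism; the two agree. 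The only thing to watch here is the bookkeeping of inverses and the built-in $\alpha_{g^{-1}}$ already carried by $\pi$.

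Two equivalent rewritings of \eqref{implementedalpha}, namely $\tau_g\pi(x) = \pi(\alpha_g(x))\sigma_g$ and $\pi(x)\sigma_{g'} = \tau_{g'}\pi(\alpha_{g'^{-1}}(x))$, then yield the equality of the four spaces. Denote by $S_1$, $S_2$, $S_3$ the three spaces in the displayed chain, in order. First, \eqref{crossedform} coincides with $S_1$: the linear span of $\{\tau_g : g\in G\}$ is weak$^*$-dense in $\VN(G)\ot I_{\Kil}$, and right multiplication by the fixed bounded operator $\pi(x)$ is weak$^*$-continuous, so every $T\pi(x)$ with $T\in\VN(G)\ot I_{\Kil}$ lies in the weak$^*$-closed linear span of the $\tau_g\pi(x)$. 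For the three-term chain, the inclusions $S_1\subseteq S_2$ and $S_3\subseteq S_2$ are immediate (take $g'=e$, resp.\ $g=e$), while the rewritings give $\tau_g\pi(x)\sigma_{g'} = \tau_{gg'}\pi(\alpha_{g'^{-1}}(x)) = \pi(\alpha_g(x))\sigma_{gg'}$, exhibiting a generator of $S_2$ simultaneously as a generator of $S_1$ and of $S_3$; hence all three are equal to a single space $S$.

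For the W*-TRO assertion I would argue directly that $S$ is closed under the ternary product. Taking generators $a=\tau_{g_1}\pi(x_1)$, $b=\tau_{g_2}\pi(x_2)$, $c=\tau_{g_3}\pi(x_3)$ and writing $m=g_2^{-1}g_3$, one has $ab^*c = \tau_{g_1}\pi(x_1)\pi(x_2)^*\tau_m\pi(x_3)$, and applying $\tau_m\pi(x_3) = \pi(\alpha_m(x_3))\sigma_m$ gives $ab^*c = \tau_{g_1}\pi(x_1)\pi(x_2)^*\pi(\alpha_m(x_3))\sigma_m$. Since $\pi$ is a TRO morphism realized concretely (Proposition~\ref{pialpha}), $\pi(x_1)\pi(x_2)^*\pi(\alpha_m(x_3)) = \pi\bigl(x_1 x_2^*\alpha_m(x_3)\bigr)$, so $ab^*c = \tau_{g_1}\pi(y)\sigma_m$ with $y\in\TX$, again a generator of $S$. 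Because the ternary product is separately weak$^*$-continuous in each argument, a standard successive-approximation argument propagates closure from the generators to all of $S$, and weak$^*$-closedness of $S$ holds by construction. Alternatively, and more in the spirit of the paper, one identifies $S$ with the corner $P\,\mathsf{M}\,Q$ of the crossed-product von Neumann algebra $\mathsf{M}$ generated by $\{(\lambda_g\ot I)\pi_{\beta}(z): g\in G,\, z\in\lX\}$, where $\beta$ is the extension from Theorem~\ref{groupaction} and $P$, $Q$ are the amplifications of $P_{\Kil}$, $P_{\Hil}$; this uses that $\pi_{\beta}$ fixes the corner projections and restricts to $\pi$ on $\TX$, so that $P\,\mathsf{M}\,Q = S$, and corners of von Neumann algebras are W*-TROs (cf.\ the discussion before Lemma~\ref{lemma:fubini}).

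I expect no serious obstacle, since given Proposition~\ref{pialpha} every step is routine. The one point demanding genuine care is the covariance computation itself, where the inverses and the built-in $\alpha_{g^{-1}}$ must be tracked precisely. For the W*-TRO structure, the only non-formal inputs are, respectively, the separate weak$^*$-continuity of the ternary product (for the direct argument) or the verification that $\pi_{\beta}|_{\TX}=\pi$ and that $\pi_{\beta}$ fixes $P_{\Kil}$, $P_{\Hil}$, together with weak$^*$-closedness of the corner (for the linking-algebra argument).
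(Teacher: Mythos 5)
Your proposal is correct and follows essentially the same route as the paper: the paper's entire proof consists of establishing the covariance formula \eqref{implementedalpha} by exactly your pointwise computation with $(\pi(x)\zeta)(h)=\alpha_{h^{-1}}(x)\zeta(h)$, declaring ``the rest is based on easy checks.'' Your explicit verification of those checks --- the rewritings $\tau_g\pi(x)=\pi(\alpha_g(x))\sigma_g$ and $\pi(x)\sigma_{g'}=\tau_{g'}\pi(\alpha_{g'^{-1}}(x))$ giving equality of the spans, and the ternary-product closure of the generators (or, equivalently, the corner identification inside $G\ltimes_\beta\lX$, which is how the paper itself proceeds in Proposition \ref{crossedcorner}) --- is sound and fills in precisely what the paper omits.
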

\begin{proof}
It suffices to prove the formula \eqref{implementedalpha}, the rest is
based on easy checks.

For $x\in \TX$, $\xi\in L^2(G; \Hil)$ and a.e.\ $g,h\in G$, we have
\begin{align*}
(\tau_g \pi(x) \sigma_g^* \xi)(h) &= (\pi(x)\sigma_{g\inv} \xi)(g\inv h)
 = \alpha_{h\inv g}(x)\bigl((\sigma_{g\inv} \xi)(g\inv h)\bigr)
 \\&= \alpha_{h\inv}\bigl(\alpha_{g}(x)\bigr) \bigl(\xi(h)\bigr)
 = \pi\bigl(\alpha_{g}(x)\xi\bigr)(h),
\end{align*}
as claimed.
\end{proof}

\begin{deft} \label{defcrprod}
Let $\alpha: G \to \Aut(\TX)$  be an action of a locally compact group
$G$ on a W*-TRO $\TX$. The W*-TRO described by  formula
\eqref{crossedform} above is called the \emph{crossed product} of $\TX$ by
$\alpha$ and is denoted $G \ltimes_{\alpha} \TX$.
\end{deft}

\begin{propn} \label{crossedcorner}
Let $\alpha: G \to \Aut(\TX)$  be an action of a locally compact group
$G$ on a W*-TRO $\TX$ and let $\beta:G \to \Aut(\lX)$ be an action of
$G$ on $\lX$ introduced in Theorem \ref{groupaction}. Then the crossed
product $G \ltimes_{\alpha} \TX$ is the corner in the crossed product
$G \ltimes_{\beta} \lX$: if we start from $\TX$ represented
non-degenerately in $B(\Hil;\Kil)$, we obtain
\[ (I_{L^2(G)} \ot P_{\Kil})(G \ltimes_{\beta} \lX)  (I_{L^2(G)} \ot P_{\Hil}) = G \ltimes_{\alpha} \TX.\]
\end{propn}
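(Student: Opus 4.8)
The plan is to express the crossed product $G\ltimes_\beta \lX$ as a concrete W*-algebra acting on $L^2(G)\ot(\Kil\oplus\Hil)$ and then compute the corner cut out by the projection $I_{L^2(G)}\ot P_\Kil$ on the left and $I_{L^2(G)}\ot P_\Hil$ on the right, showing it equals the concrete description of $G\ltimes_\alpha\TX$ from Definition~\ref{defcrprod}. First I would invoke Lemma~\ref{equivcrprod} applied to the action $\beta$ on $\lX$ (which, being a von Neumann algebra, has $\lX$ as its own linking algebra up to the identification $\mathsf{R}_{\lX}\cong M_2(\lX)$): the representation $\pi_\beta$ realizes $\lX$ on $B(L^2(G)\ot(\Kil\oplus\Hil))$ and $G\ltimes_\beta\lX$ is the weak$^*$-closed span of $(\VN(G)\ot I_{\Kil\oplus\Hil})\pi_\beta(\lX)$. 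The key compatibility to record is that $\pi_\beta$ restricts on the corner $\TX\subset\lX$ to the representation $\pi_\alpha$ of Proposition~\ref{pialpha}; this follows from the defining formula \eqref{point>int}--\eqref{phifM} together with the fact, established inside the proof of Theorem~\ref{avatars}, that $\pi_\beta|_{\TX}=\pi_\alpha$ after the identification $\lY=L^\infty(G)\wot\lX$.

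Next I would use the block-decomposition machinery already set up in the excerpt. Since $\beta_g$ fixes the corner projections $P_\Kil, P_\Hil$ (as noted in the proof of Corollary~\ref{fixedcorner}), the amplified representation $\pi_\beta$ commutes in the appropriate sense with $I_{L^2(G)}\ot P_\Kil$ and $I_{L^2(G)}\ot P_\Hil$, and the regular-representation factors $\tau,\sigma$ act only on the $L^2(G)$ leg, hence also commute with these projections. The crucial computation is therefore
\begin{align*}
(I_{L^2(G)}\ot P_\Kil)\,\tau_g\,\pi_\beta(z)\,(I_{L^2(G)}\ot P_\Hil)
&= \tau_g\,(I_{L^2(G)}\ot P_\Kil)\,\pi_\beta(z)\,(I_{L^2(G)}\ot P_\Hil)\\
&= \tau_g\,\pi_\beta(P_\Kil z P_\Hil),
\end{align*}
using Corollary~\ref{extendconverse} (or equivalently the block structure of $\lX$) to see that cutting $\pi_\beta(z)$ by the corner projections reproduces $\pi_\beta$ evaluated on the $(1,2)$-corner $P_\Kil z P_\Hil\in\TX$. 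As $z$ ranges over $\lX$, the element $P_\Kil z P_\Hil$ ranges over all of $\TX$, so the corner of the spanning set $\{\tau_g\pi_\beta(z):g\in G,\ z\in\lX\}$ is exactly $\{\tau_g\pi_\alpha(x):g\in G,\ x\in\TX\}$, whose weak$^*$-closed span is $G\ltimes_\alpha\TX$ by Lemma~\ref{equivcrprod}.

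Finally I would pass from the spanning sets to their weak$^*$-closed linear spans. The inclusion of the corner of $G\ltimes_\beta\lX$ into $G\ltimes_\alpha\TX$ is immediate from the computation above together with normality (weak$^*$-continuity) of the map $w\mapsto (I_{L^2(G)}\ot P_\Kil)\,w\,(I_{L^2(G)}\ot P_\Hil)$, which carries weak$^*$-limits to weak$^*$-limits and hence maps the closed span into the closed span. The reverse inclusion follows because each $\tau_g\pi_\alpha(x)$ already lies in the corner of $G\ltimes_\beta\lX$, being the corner-cut of $\tau_g\pi_\beta(z)$ for any $z$ with $P_\Kil z P_\Hil=x$. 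I expect the main obstacle to be the interchange of the corner-compression with weak$^*$-closure: one must make sure that compressing the weak$^*$-closed span yields precisely the weak$^*$-closed span of the compressions, not merely something between the two. This is handled cleanly by the remark in the excerpt that for a projection $P$ in a weak$^*$-closed subalgebra the corner $P\mathsf{Z}Q$ is again weak$^*$-closed and by the identity $P\mathsf{Z}Q\wot\mathsf{W}=(P\ot I)(\mathsf{Z}\wot\mathsf{W})(Q\ot I)$ recorded just before Lemma~\ref{lemma:fubini}, which guarantees that the corner of the weak$^*$-closure coincides with the weak$^*$-closure of the corner; invoking it finishes the proof.
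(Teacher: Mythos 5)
Your proposal is correct and takes essentially the same approach as the paper: the paper's own (very brief) proof likewise compresses the generating set $(\VN(G)\ot I_{\Kil\oplus\Hil})\pi_{\beta}(\lX)$ to $(\VN(G)\ot I_{\Kil})\pi_{\alpha}(\TX)$, relying on the compatibility $\pi_{\beta}|_{\TX}=\pi_{\alpha}$ established in the proof of Theorem \ref{avatars}, and then concludes by ``weak$^*$ density of respective spaces''. The extra care you take in interchanging the corner compression with weak$^*$ closure (weak$^*$-continuity of $w\mapsto (I_{L^2(G)}\ot P_{\Kil})w(I_{L^2(G)}\ot P_{\Hil})$ in one direction, weak$^*$-closedness of the compressed algebra in the other) is precisely what the paper's one-line density remark leaves implicit.
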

\begin{proof}
An immediate consequence of the fact that  the space
\[(I_{L^2(G)} \ot P_{\Kil})
\bigl((\VN(G) \ot  I_{\Kil \oplus \Hil})\pi_{\beta}(\lX)\bigr)
 (I_{L^2(G)} \ot P_{\Hil})\]
coincides with
$(\VN(G) \ot  I_{\Kil})\pi_{\alpha}(\TX)$, established in the proof
of Theorem \ref{avatars} and the weak$^*$ density of respective
spaces.
\end{proof}

The following corollary is now easy to observe, once again using the von
Neumann algebra result and referring to the properties of Hamana extensions.

\begin{cor}
The crossed product $G \ltimes_{\alpha} \TX$ does not depend on the choice of the original faithful non-degenerate representation of $\TX$.
\end{cor}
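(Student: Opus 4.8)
The plan is to reduce the statement to the known uniqueness of the von Neumann algebraic crossed product, using the corner description of Proposition~\ref{crossedcorner}. Suppose then that $\TX$ is represented non-degenerately both in $B(\Hil_1;\Kil_1)$ and in $B(\Hil_2;\Kil_2)$, with corresponding linking von Neumann algebras $\mathsf{R}_1,\mathsf{R}_2$ and corner projections $P_{\Kil_1},P_{\Hil_1}$ and $P_{\Kil_2},P_{\Hil_2}$. The identity map on $\TX$, viewed as a TRO morphism from the first copy to the second, is a normal, injective, non-degenerate TRO morphism, so by Proposition~\ref{extend} it extends to a normal $^*$-isomorphism $\Phi:\mathsf{R}_1\to\mathsf{R}_2$ which is moreover unital; applying the same to $\id_{\TX}$ in the opposite direction gives its inverse. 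Because $\Phi$ is the Hamana extension of $\id_{\TX}$, it carries each block to the corresponding block (cf.\ \eqref{betaalpha}), so it restricts to a normal $^*$-isomorphism of the upper-left corner algebras and in particular sends the unit $P_{\Kil_1}$ of $\la\TX\TX^*\ra''_1$ to $P_{\Kil_2}$, and likewise $P_{\Hil_1}$ to $P_{\Hil_2}$.

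First I would check that $\Phi$ intertwines the two extended actions. Let $\beta^{(1)}$ and $\beta^{(2)}$ be the actions of $G$ on $\mathsf{R}_1$ and $\mathsf{R}_2$ obtained from $\alpha$ via Theorem~\ref{groupaction}. For each $g\in G$ both $\Phi\circ\beta^{(1)}_g$ and $\beta^{(2)}_g\circ\Phi$ are normal $^*$-homomorphisms $\mathsf{R}_1\to\mathsf{R}_2$ restricting to $\alpha_g$ on $\TX$, so by the uniqueness part of Proposition~\ref{extend} they coincide. Thus $\Phi$ is an isomorphism of $G$--von Neumann algebras from $(\mathsf{R}_1,\beta^{(1)})$ onto $(\mathsf{R}_2,\beta^{(2)})$.

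The classical fact that the crossed product of a von Neumann algebra by a $G$-action is independent of the representation now yields a normal $^*$-isomorphism $\wt\Phi: G\ltimes_{\beta^{(1)}}\mathsf{R}_1\to G\ltimes_{\beta^{(2)}}\mathsf{R}_2$ that is canonical in the sense that it fixes the copy of $\VN(G)$ and satisfies $\wt\Phi(\pi_{\beta^{(1)}}(z))=\pi_{\beta^{(2)}}(\Phi(z))$ for $z\in\mathsf{R}_1$. The crucial point is that $\wt\Phi$ respects the corner structure. Since each $\beta_g$ fixes the corner projections (as recorded in Corollary~\ref{fixedcorner}), the formula of Proposition~\ref{pialpha} gives $\pi_{\beta}(P_\Kil)=I_{L^2(G)}\ot P_\Kil$ and similarly $\pi_{\beta}(P_\Hil)=I_{L^2(G)}\ot P_\Hil$; hence $\wt\Phi(I_{L^2(G)}\ot P_{\Kil_1})=\pi_{\beta^{(2)}}(\Phi(P_{\Kil_1}))=I_{L^2(G)}\ot P_{\Kil_2}$ and likewise for the $\Hil$-projections.

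Compressing by these projections and invoking Proposition~\ref{crossedcorner}, $\wt\Phi$ restricts to a normal $^*$-isomorphism of corners
\[ (I_{L^2(G)}\ot P_{\Kil_1})(G\ltimes_{\beta^{(1)}}\mathsf{R}_1)(I_{L^2(G)}\ot P_{\Hil_1}) \longrightarrow (I_{L^2(G)}\ot P_{\Kil_2})(G\ltimes_{\beta^{(2)}}\mathsf{R}_2)(I_{L^2(G)}\ot P_{\Hil_2}), \]
that is, to a TRO isomorphism between the two realisations of $G\ltimes_\alpha\TX$. I expect the main obstacle to be precisely the verification that the canonical crossed-product isomorphism carries corner projections to corner projections; this hinges on identifying $I_{L^2(G)}\ot P_\Kil$ with $\pi_\beta$ applied to the fixed projection $P_\Kil\in\mathsf{R}$, so that the intertwining property of $\wt\Phi$ can be applied. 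The remainder is a routine transport of structure through the corner compression.
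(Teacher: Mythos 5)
Your proposal is correct and takes essentially the same approach the paper intends: the paper gives no detailed argument but points exactly to this route, namely Hamana-extending the identity map to an equivariant $^*$-isomorphism of the two linking von Neumann algebras, invoking the representation-independence of the von Neumann algebraic crossed product, and compressing back via Proposition \ref{crossedcorner}. Your extra verification that the canonical isomorphism respects the corner projections (using $\beta_g(P_{\Kil})=P_{\Kil}$ and hence $\pi_\beta(P_{\Kil})=I_{L^2(G)}\ot P_{\Kil}$) is precisely the ``properties of Hamana extensions'' detail the paper leaves to the reader.
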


The final result in this section explains the connection between the
definition of the crossed product introduced above and that of Hamana
in \cite{HamanaUnpubl}. Before we formulate it we need to introduce
another action: if $\alpha:G\to \Aut(\TX)$ is an action, then
$\Ad_{\rho} \ot \alpha $ is an action of $G$ on the W*-TRO
$B(L^2(G))\wot \TX$ given by the formula
\begin{equation}
(\Ad_{\rho} \ot \alpha)_g (z) = (\Ad_{\rho_g} \ot \alpha_g )(z),
  \qquad z \in  B(L^2(G)) \wot \TX, \label{tensoraction}
\end{equation}
where $\rho:G \to B(L^2(G))$ is the right regular representation and
we take the convention that $\Ad_{\rho_g}(z) = \rho_g z\rho_g^*$ for
$z \in  B(L^2(G))$. Note for further use the following fact: if we
write $\delta:=\Ad_{\rho} \ot \alpha$, then the corresponding map
$\pi_{\delta}:  B(L^2(G)) \wot \TX \to L^{\infty}(G) \wot B(L^2(G))
\wot \TX$ is given explicitly by the formula:
\begin{equation} \label{Adrhoalpha}
\pi_{\delta} (z) = \chi_{12} (V_{12}^* (\id_{B(L^2(G))} \ot \pi_{\alpha})(z)V_{12}), \qquad z \in B(L^2(G)) \wot \TX ,
\end{equation}
where $V\in B(L^2(G) \ot L^2(G))$ is the right multiplicative unitary
of $G$ (see Subsection \ref{LCQG}) and $\chi_{12}$ flips the first two
legs of the tensor product.

\begin{propn} \label{crossedfixed}
If $\alpha:G \to \Aut(\TX)$ is an action of $G$ on a W*-TRO $\TX$,
then  the following equality holds:
\[ G \ltimes_{\alpha} \TX = \Fix( \Ad_{\rho} \ot \alpha). \]
\end{propn}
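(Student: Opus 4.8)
The plan is to reduce the identity to its von Neumann algebraic counterpart by passing to the linking von Neumann algebra, in the same spirit as the other results of this section. Concretely, I would first record the underlying fact for von Neumann algebras: for the action $\beta$ of $G$ on $\lX$ furnished by Theorem \ref{groupaction}, one has
\[ G \ltimes_{\beta} \lX = \Fix(\Ad_{\rho} \ot \beta), \]
the fixed point space being computed inside $B(L^2(G)) \wot \lX$. (Recall that for a von Neumann algebra Definition \ref{defcrprod} reduces to the usual von Neumann crossed product, so this is a statement about ordinary crossed products.) This is classical: writing $\beta_g=\Ad(u_g)$ for the canonical spatial implementation and $\widehat{u}$ for the pointwise operator $(\widehat{u}\xi)(s)=u_{s^{-1}}\xi(s)$, so that $\pi_{\beta}(z)=\widehat{u}(I\ot z)\widehat{u}^*$, a short computation using $(\rho_g \ot u_g)\widehat{u} = \widehat{u}(\rho_g \ot I)$ shows that each generator $\lambda(f)\ot I$ and $\pi_{\beta}(z)$ is fixed by $\Ad_{\rho_g}\ot\beta_g$, giving $G \ltimes_{\beta}\lX \subset \Fix(\Ad_{\rho}\ot\beta)$. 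The reverse inclusion is exactly the commutation theorem for crossed products, which identifies $\Fix(\Ad_{\rho}\ot\beta)$ with $\{\rho_g \ot u_g: g \in G\}' \cap (B(L^2(G)) \wot \lX)$ and hence with $G \ltimes_\beta \lX$ (see Section 18 of \cite{Stratila}).

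Next I would set up the corner picture. Representing $\TX$ non-degenerately in $B(\Hil;\Kil)$, put $\mathsf{W} := B(L^2(G)) \wot \TX$; by the tensor identification noted before Lemma \ref{lemma:fubini} its linking von Neumann algebra is $\mathsf{R}_{\mathsf{W}} = B(L^2(G)) \wot \lX$, with corner projections $I_{L^2(G)} \ot P_{\Kil}$ and $I_{L^2(G)} \ot P_{\Hil}$. The one step requiring care is to check that $\Ad_\rho \ot \beta$ is the extension of the action $\Ad_\rho \ot \alpha$ on $\mathsf{W}$ in the sense of Theorem \ref{groupaction}. Since each $\beta_g$ fixes the projections $P_{\Kil}$ and $P_{\Hil}$, the automorphism $\Ad_{\rho_g}\ot\beta_g$ preserves the corner $\mathsf{W}$ and restricts there to $\Ad_{\rho_g}\ot\alpha_g$; being a normal automorphism of $\mathsf{R}_{\mathsf{W}}$, it must then coincide with the unique extension of $\Ad_{\rho_g}\ot\alpha_g$ by Proposition \ref{extend}.

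With these two ingredients the conclusion is assembled from three equalities. Applying Corollary \ref{fixedcorner} to the action $\Ad_\rho\ot\alpha$ on $\mathsf{W}$ and its extension $\Ad_\rho\ot\beta$ on $\mathsf{R}_{\mathsf{W}}$ gives
\[ \Fix(\Ad_\rho \ot \alpha) = (I_{L^2(G)}\ot P_{\Kil})\,\Fix(\Ad_\rho\ot\beta)\,(I_{L^2(G)}\ot P_{\Hil}); \]
the von Neumann algebraic identity of the first paragraph rewrites the middle factor as $G\ltimes_\beta\lX$; and Proposition \ref{crossedcorner} identifies the resulting corner with $G\ltimes_\alpha\TX$. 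Chaining these yields $\Fix(\Ad_\rho\ot\alpha) = G\ltimes_\alpha\TX$. The only genuinely analytic input is the von Neumann algebraic equality, and this is where I expect the real work to sit; everything else is the corner and uniqueness bookkeeping that the linking-algebra formalism is built to absorb. A direct alternative, working entirely within the TRO via the formula \eqref{Adrhoalpha} for $\pi_{\delta}$ and the characterisation of fixed points as $\pi_\delta(z) = I \ot z$, would in effect reprove that same commutation theorem and seems less economical.
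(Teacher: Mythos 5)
Your proposal is correct and follows essentially the same route as the paper: extend to the linking von Neumann algebra, identify $\Ad_{\rho}\ot\beta$ as the canonical extension of $\Ad_{\rho}\ot\alpha$ via uniqueness of Hamana extensions, invoke the Takesaki--Digernes commutation theorem for von Neumann algebraic crossed products (the paper cites Corollary 19.13 of \cite{Stratila} where you sketch the argument), and then pass to corners using Proposition \ref{crossedcorner} and Corollary \ref{fixedcorner}. The paper's proof is exactly this chain of identifications, so your version differs only in spelling out details the paper leaves to citations.
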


\begin{proof}
Assume that $\TX$ is non-degenerately represented in $B(\Hil;\Kil)$ and
consider  the extension of $\alpha$ to an action $\beta$ of $G$ on the
von Neumann algebra $\lX$ given by Lemma \ref{groupaction}. The
uniqueness of Hamana extensions shows that the action $ \Ad_{\rho} \ot
\beta$ of $G$ on $ B(L^2(G)) \wot \lX$ is the canonical extension of $
\Ad_{\rho} \ot \alpha$, the action of $G$ on $ B(L^2(G)) \wot
\TX$. The left version of Corollary 19.13 in \cite{Stratila},
attributed there to M.\,Takesaki and T.\,Digernes, shows that  $G
\ltimes_{\beta} \lX = \Fix ( \Ad_{\rho} \ot \beta)$.
In view of Proposition \ref{crossedcorner} it remains to show that
\[ \Fix( \Ad_{\rho} \ot \alpha)=  (I_{L^2(G)} \ot P_{\Kil})\, \Fix ( \Ad_{\rho} \ot \beta)\, (I_{L^2(G)} \ot P_{\Hil}).\]
This however follows from Corollary \ref{fixedcorner} in view of the
comments above.
\end{proof}

\begin{remark}
In \cite{HamanaUnpubl} Hamana defines the crossed product for an action of a group on an operator space directly via the fixed point formula of the type above. Note however that his definition does not coincide explicitly with ours, as he follows the approach of \cite{NakTak}, where everything is formulated in terms of the \emph{right} invariant Haar measure of $G$ (so that the crossed product contains the amplification of the \emph{right} group von Neumann algebra).
\end{remark}

It should be clear from the above discussions that it is also possible to develop the TRO crossed product construction in the C*-setting, we will however not need it in the sequel.

\section{Actions of locally compact quantum groups on W*-TROs and
  resulting crossed products} \label{LCQG}

In this section we discuss actions of locally compact quantum groups on W*-TROs and define associated crossed products.

We follow the von Neumann algebraic approach to locally compact
quantum groups due to Kustermans and Vaes
\cite{KV}, see also \cite{KNR} and \cite{KNR2} for more background.
A \emph{locally  compact quantum group} $\QG$, effectively a virtual object, is
studied via the von Neumann algebra $L^{\infty}(\QG)$, playing the
role of the algebra of essentially bounded measurable functions on $\QG$,
equipped with a \emph{coproduct}
$\Com:L^{\infty}(\QG) \to L^{\infty}(\QG) \wot L^{\infty}(\QG)$, which
is a unital normal coassociative $^*$-homomorphism. A locally compact
quantum group $\QG$ is by definition assumed to
admit  a left Haar weight $\phi$ and a right Haar weight $\psi$ --
these are faithful, normal semifinite weights on $L^{\infty}(\QG)$
satisfying suitable invariance conditions.
The GNS representation space for the left Haar weight will be denoted
by $\ltwo(\QG)$. All the information about $\QG$ is contained in the
\emph{right multiplicative unitary}
$V \in B(\ltwo(\QG) \ot \ltwo(\QG))$; it is
a unitary operator such that  we have
\[ \Com(x) = V(x \ot I_{L^2(\QG)}) V^*, \qquad x \in \Linf(\QG).\]
This fact enables us to define a natural extension of the coproduct,
the map $\wt{\Com}: B(\ltwo(\QG)) \to B(\ltwo(\QG) \ot \ltwo(\QG))$
given by the same formula:
\begin{equation}
\wt{\Com}(y) = V(y \ot I_{L^2(\QG)}) V^*, \qquad y \in B(\ltwo(\QG)).
\label{Comext}
\end{equation}
In fact, $\wt{\Com}$ takes values in $B(\ltwo(\QG)) \wot \Linf(\QG)$
as $V\in \Linf(\hQG)'\wot\Linf(\QG)$, where
$\hQG$ is the \emph{dual} locally compact quantum group of $\QG$
(the algebra $L^{\infty}(\hQG)$ acts naturally on $\ltwo(\QG)$).
If $\QG=G$ happens to be a locally compact group, then
$L^{\infty}(\hQG)=\VN(G)$. Finally note that by analogy with the
classical situation we denote the predual of $L^{\infty}(\QG)$ by $L^1(\QG)$.

Recall the standard definition of an action of $\QG$ on a von Neumann
algebra $\mlg$. A (continuous, left) action of $\QG$ on $\mlg$ is an
injective, normal, unital $^*$-homomorphism $\beta:\mlg \to
L^{\infty}(\QG) \wot \mlg $ satisfying the action equation
\[ (\id_{L^{\infty}(\QG)} \ot \beta)\circ \beta = (\Com_{\QG} \ot \id_{\mlg})\circ \beta.\]

Replacing a von Neumann algebra with a W*-TRO yields no extra complications.

\begin{deft}
Let $\TX$ be a W*-TRO and let $\QG$ be a locally compact quantum
group. An \emph{action} of $\QG$ on $\TX$ is an injective,  normal,
non-degenerate TRO morphism
$\alpha:\TX \to  L^{\infty}(\QG) \wot \TX$ such that
\[ (\id_{L^{\infty}(\QG)} \ot \alpha)\circ \alpha = (\Com_{\QG} \ot
\id_{\mlg})\circ \alpha.\]
\end{deft}

Theorem \ref{avatars} implies that if $\QG=G$ happens to be a
classical locally compact group, the definition above
agrees with Definition \ref{def:group-act}.

\begin{propn} \label{QHamextactions}
Let $\TX$ be a W*-TRO and let $\QG$ be a locally compact quantum group
acting on $\TX$ via $\alpha:\TX \to  L^{\infty}(\QG) \wot \TX$. Then
the extension provided by Proposition \ref{extend}
defines an action $\beta=\lX \to  L^{\infty}(\QG) \wot \lX$ of $\QG$ on $\lX$.
\end{propn}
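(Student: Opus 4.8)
The plan is to apply Proposition \ref{extend} directly and then to verify that the resulting $^*$-homomorphism satisfies the action equation; the non-trivial content lies entirely in the latter.

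First I would identify the target linking von Neumann algebra. The map $\alpha$ takes values in $\TY := L^{\infty}(\QG)\wot\TX$, and, arguing as in the remark preceding Lemma \ref{lemma:fubini} but with the von Neumann algebraic factor on the left, one sees that $\lY = \mathsf{R}_{L^{\infty}(\QG)\wot\TX}$ is canonically $L^{\infty}(\QG)\wot\lX$: the two diagonal corners of the linking algebra of $L^{\infty}(\QG)\wot\TX$ are $L^{\infty}(\QG)\wot\la\TX\TX^*\ra''$ and $L^{\infty}(\QG)\wot\la\TX^*\TX\ra''$, and after the obvious flip $\mathcal{L}\ot\Kil\oplus\mathcal{L}\ot\Hil\cong\mathcal{L}\ot(\Kil\oplus\Hil)$ the matrix picture is exactly $L^{\infty}(\QG)\wot\lX$. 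With this identification, Proposition \ref{extend} yields a unique normal $^*$-homomorphism $\beta:\lX\to L^{\infty}(\QG)\wot\lX$ restricting to $\alpha$ on the upper-right corner. Since $\alpha$ is injective, part (i) gives that $\beta$ is injective; since $\alpha$ is non-degenerate, part (ii) gives that $\beta$ is unital. Hence $\beta$ is an injective, normal, unital $^*$-homomorphism, and only the action equation
\[
(\id_{L^{\infty}(\QG)}\ot\beta)\circ\beta = (\Com\ot\id_{\lX})\circ\beta
\]
remains to be checked.

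Both sides are normal $^*$-homomorphisms from $\lX$ into $L^{\infty}(\QG)\wot L^{\infty}(\QG)\wot\lX$, so it suffices to check equality on a weak$^*$-generating subset. The upper-right corner $\TX$ generates $\lX$ as a von Neumann algebra: adjoints recover $\TX^*$, while the products $xy^*$ and $x^*y$ span weak$^*$-densely the diagonal corners $\la\TX\TX^*\ra''$ and $\la\TX^*\TX\ra''$. Thus it is enough to test the identity on $x\in\TX$. There, because $\beta$ restricts to $\alpha$ and the leg maps restrict correctly (namely $(\id_{L^{\infty}(\QG)}\ot\beta)|_{L^{\infty}(\QG)\wot\TX}=\id_{L^{\infty}(\QG)}\ot\alpha$ and $(\Com\ot\id_{\lX})|_{L^{\infty}(\QG)\wot\TX}=\Com\ot\id_{\TX}$), the two sides reduce to $(\id_{L^{\infty}(\QG)}\ot\alpha)\circ\alpha$ and $(\Com\ot\id_{\TX})\circ\alpha$, which agree by the action equation for $\alpha$. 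Two normal $^*$-homomorphisms agreeing on a weak$^*$-generating set coincide, which finishes the verification.

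The step demanding most care is this reduction to the corner: one must confirm that, under the identification of the first step, $\beta$ genuinely lands in $L^{\infty}(\QG)\wot\lX$ with $\TX$-corner equal to $\alpha(\TX)$, that the leg maps restrict as claimed, and that the two triple tensor products are matched compatibly (this is where coassociativity of the ambient structure enters). An alternative that sidesteps the explicit corner bookkeeping is to invoke uniqueness of Hamana extensions: since the extension construction preserves composition and commutes with tensoring by $\id$, both $(\id\ot\beta)\circ\beta$ and $(\Com\ot\id)\circ\beta$ are $^*$-homomorphic extensions of the single TRO morphism $(\id\ot\alpha)\circ\alpha=(\Com\ot\id)\circ\alpha$, hence equal by the uniqueness clause of Proposition \ref{extend}.
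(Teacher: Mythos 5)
Your proposal is correct and takes essentially the same route as the paper: the paper's own proof just says it is ``similar to the proof of Theorem \ref{avatars}'', whose converse half consists exactly of your steps --- identifying the linking von Neumann algebra of $L^{\infty}(\QG)\wot\TX$ with $L^{\infty}(\QG)\wot\lX$, applying Proposition \ref{extend} to obtain an injective, unital, normal $^*$-homomorphism, and then transferring the action equation from $\TX$ to $\lX$. The only cosmetic difference is that the paper verifies that equation by an explicit multiplicative computation on elements $xz^*$ ($x,z\in\TX$) spanning a weak$^*$-dense $^*$-subalgebra, which your generating-set (equivalently, uniqueness-of-Hamana-extension) argument packages abstractly.
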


\begin{proof}
Similar to the proof of Theorem \ref{avatars}: effectively we use the
fact that if we denote the W*-TRO $ L^{\infty}(\QG) \wot \TX$ by $\TY$,
then we have $\lY\cong L^{\infty}(\QG) \wot \lX$.
\end{proof}

If $\beta:\mlg \to  L^{\infty}(\QG) \wot \mlg$ is an action of $\QG$
on a von Neumann algebra $\mlg$, the crossed product $\QG
\ltimes_{\beta} \mlg$ is defined as the von Neumann algebra $(
(L^{\infty}(\hQG) \ot I_{\mlg})\beta (\mlg) )''$. Equivalently,
\[ \QG \ltimes_{\beta} \mlg =  \mathrm{w^*-}\textup{cl } \textup{Lin} \{(y \ot I_{\mlg})\beta(m): y \in L^{\infty}(\hQG), m \in \mlg\}.\]
The last equality amounts to the fact that the weak$^*$ closure of
$(L^{\infty}(\hQG)\ot I_{\mlg})\beta (\mlg)$ is a $^*$-subspace of
$B(L^2(\QG)) \wot \mlg$. This is a well-known fact, formulated for example in Proposition 2.3 of \cite{KS}: it can be shown
using  a simpler version of the C*-algebraic calculations in
\cite{SkalZac} after Definition 2.4.

\begin{deft} \label{DefTROcrossed}
Let $\TX$ be a W*-TRO non-degenerately represented in $B(\Hil;\Kil)$
and let $\QG$ be a locally compact quantum group acting on $\TX$ via
$\alpha:\TX \to  L^{\infty}(\QG) \wot \TX$. The \emph{W*-TRO crossed
product} $\QG \ltimes_{\alpha} \TX$  is defined as the weak$^*$ closure
of the linear span of $(L^{\infty}(\hQG) \ot I_{\Kil})\alpha(\TX)$.
\end{deft}

We will soon note that again the crossed product has several other
descriptions, but we first need to record the quantum version of
Proposition \ref{crossedcorner}.

\begin{propn} \label{Qcrossedcorner}
Let $\TX$ be a W*-TRO non-degenerately represented in $B(\Hil;\Kil)$
and let $\QG$ be a locally compact quantum group acting on $\TX$ via
$\alpha:\TX \to  L^{\infty}(\QG) \wot \TX$.  Let $\beta:\lX \to
L^{\infty}(\QG) \wot \lX$ be the action of $\QG$ on $\lX$ provided by
Proposition \ref{QHamextactions}. Then
\[
(I_{L^2(G)} \ot P_{\Kil})(\QG \ltimes_{\beta} \lX )(I_{L^2(G)} \ot
P_{\Hil}) = \QG \ltimes_{\alpha} \TX.\]
\end{propn}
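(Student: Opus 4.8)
The plan is to reduce this quantum-group statement to a corner computation exactly parallel to the classical Proposition \ref{crossedcorner}, exploiting the fact that $\beta$ is the Hamana extension of $\alpha$. First I would recall from the construction in Proposition \ref{QHamextactions} that, writing $\TY = L^\infty(\QG) \wot \TX$, the linking von Neumann algebra satisfies $\lY \cong L^\infty(\QG) \wot \lX$, and that under this identification $\alpha = \beta|_{\TX}$ takes values in the corner
\[
(I_{L^2(\QG)} \ot P_{\Kil})(L^\infty(\QG) \wot \lX)(I_{L^2(\QG)} \ot P_{\Hil}).
\]
More precisely, by \eqref{betaalpha} and the way Hamana extensions interact with the corner projections (Corollary \ref{extendconverse}), we have $\alpha(x) = (I_{L^2(\QG)} \ot P_{\Kil})\,\beta(x)\,(I_{L^2(\QG)} \ot P_{\Hil})$ for all $x \in \TX$, where $x$ is viewed inside $\lX$ via the upper-right corner embedding.

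The crux of the argument is then the following algebraic identity at the level of the spanning sets, before taking weak$^*$ closures. The crossed product $\QG \ltimes_\beta \lX$ is the weak$^*$ closure of the span of $(L^\infty(\hQG) \ot I_{\Kil \oplus \Hil})\beta(\lX)$, while $\QG \ltimes_\alpha \TX$ is the weak$^*$ closure of the span of $(L^\infty(\hQG) \ot I_{\Kil})\alpha(\TX)$. The key step is to verify that
\[
(I_{L^2(\QG)} \ot P_{\Kil})\bigl((L^\infty(\hQG) \ot I_{\Kil \oplus \Hil})\beta(\lX)\bigr)(I_{L^2(\QG)} \ot P_{\Hil})
\]
coincides with the span of $(L^\infty(\hQG) \ot I_{\Kil})\alpha(\TX)$. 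Here I would use that the elements of $L^\infty(\hQG)$ act on $L^2(\QG)$ and hence commute with the corner projections $P_{\Kil}$, $P_{\Hil}$ acting on the second leg, so the projections pass freely through the factor $y \ot I$; combined with the corner formula for $\alpha$ above, the left projection converts $I_{\Kil \oplus \Hil}$ into $I_{\Kil}$ and the right projection picks out exactly $\beta(\lX)(I \ot P_{\Hil})$, whose relevant part is $\alpha(\TX)$ once one checks that the other three corners of $\beta(\lX)$ are annihilated. This is precisely the quantum analogue of the corner identity established in the proof of Theorem \ref{avatars}, and I would cite that computation rather than redo it.

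Finally I would need to confirm that compressing by the projections is compatible with taking the weak$^*$ closure, i.e.\ that the corner of the weak$^*$ closure equals the weak$^*$ closure of the corner. This follows from the observation recorded just before Lemma \ref{lemma:fubini}: compression by a fixed pair of projections $(I \ot P_{\Kil})\,\cdot\,(I \ot P_{\Hil})$ is a normal (weak$^*$-continuous) map, and moreover for a weak$^*$-closed subalgebra $\mathsf{Z}$ and projections $P, Q$ the space $P\mathsf{Z}Q$ is already weak$^*$-closed, so the compression of $\QG \ltimes_\beta \lX$ is weak$^*$-closed and contains the dense spanning set of $\QG \ltimes_\alpha \TX$. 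I expect the main obstacle to be bookkeeping the identification $\lY \cong L^\infty(\QG) \wot \lX$ carefully enough that the corner projections on the $\lX$-leg match up correctly with the projections defining $\alpha$ as a corner of $\beta$; once that identification is pinned down via Corollary \ref{extendconverse} and the remarks preceding Lemma \ref{lemma:fubini}, the remainder is the same formal corner manipulation as in the classical Proposition \ref{crossedcorner}.
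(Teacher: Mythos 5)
Your proposal is correct and follows essentially the same route as the paper: the paper's proof simply invokes the classical Proposition \ref{crossedcorner}, whose argument is exactly your key step — that compressing the spanning set $(L^{\infty}(\hQG)\ot I_{\Kil\oplus\Hil})\beta(\lX)$ by $(I\ot P_{\Kil})\,\cdot\,(I\ot P_{\Hil})$ yields precisely $(L^{\infty}(\hQG)\ot I_{\Kil})\alpha(\TX)$, followed by the weak$^*$-density/closure bookkeeping. Your handling of the closure step (normality of the compression plus weak$^*$-closedness of corners of a von Neumann algebra) is in fact spelled out more carefully than in the paper, which compresses this into the phrase ``and the weak$^*$ density of respective spaces.''
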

\begin{proof}
Follows exactly as in the case of Proposition \ref{crossedcorner}.
\end{proof}

\begin{cor}
Under the assumptions of Definition \ref{DefTROcrossed}, we have the
following equalities:
\begin{align*}
  \QG \ltimes_{\alpha} \TX &= \mathrm{w^*-}\textup{cl }  \textup{Lin} \{(L^{\infty}(\hQG) \ot  I_{\Kil})\alpha(\TX)(L^{\infty}(\hQG)  \ot  I_{\Hil})\}
   \\&= \mathrm{w^*-}\textup{cl }  \textup{Lin} \{\alpha(\TX)(L^{\infty}(\hQG)  \ot  I_{\Hil})\}. \end{align*}
Moreover, $\QG \ltimes_{\alpha} \TX$ is the W*-TRO generated in
$B(L^2(\QG) \ot \Hil;L^2(\QG)\ot \Kil)$ by the set $(L^{\infty}(\hQG)
\ot I_{\Kil})\alpha (\TX)$. It does not depend \textup{(}up to an
isomorphism\textup{)} on the initial choice of a faithful
non-degenerate representation of $\TX$.

\end{cor}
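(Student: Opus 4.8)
The plan is to deduce everything from Proposition~\ref{Qcrossedcorner}, which presents $\QG \ltimes_{\alpha}\TX$ as the corner $(I_{L^2(\QG)}\ot P_{\Kil})(\QG\ltimes_{\beta}\lX)(I_{L^2(\QG)}\ot P_{\Hil})$ of the von Neumann algebraic crossed product, combined with the standard descriptions of the latter. First I would record the von Neumann algebra facts recalled just before Definition~\ref{DefTROcrossed}: the weak$^*$-closed linear span of $(L^{\infty}(\hQG)\ot I)\beta(\lX)$ is a $^*$-subspace (Proposition~2.3 of \cite{KS}), and since it contains the generating sets $L^{\infty}(\hQG)\ot I$ and $\beta(\lX)$ it is in fact all of $\QG\ltimes_{\beta}\lX$. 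Taking adjoints and using multiplicativity then gives the equal forms
\begin{align*}
\QG\ltimes_{\beta}\lX
&= \mathrm{w^*-}\textup{cl}\,\textup{Lin}\,(L^{\infty}(\hQG)\ot I)\beta(\lX)
= \mathrm{w^*-}\textup{cl}\,\textup{Lin}\,\beta(\lX)(L^{\infty}(\hQG)\ot I)\\
&= \mathrm{w^*-}\textup{cl}\,\textup{Lin}\,(L^{\infty}(\hQG)\ot I)\beta(\lX)(L^{\infty}(\hQG)\ot I).
\end{align*}

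Next I would compute the corner of each of these. The two key points are that $L^{\infty}(\hQG)\ot I_{\Kil\oplus\Hil}$ commutes with $I_{L^2(\QG)}\ot P_{\Kil}$ and with $I_{L^2(\QG)}\ot P_{\Hil}$ (because $L^{\infty}(\hQG)$ and the corner projections act on different tensor legs), and that the extended action $\beta$ preserves the corner projections, $\beta(P_{\Kil})=I_{L^2(\QG)}\ot P_{\Kil}$ and $\beta(P_{\Hil})=I_{L^2(\QG)}\ot P_{\Hil}$; this last fact is built into the Hamana extension of Proposition~\ref{QHamextactions} and is exactly what underlies Proposition~\ref{Qcrossedcorner}. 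A typical element $(y\ot I)\beta(m)(y'\ot I)$ of the two-sided span, cut by $I\ot P_{\Kil}$ on the left and $I\ot P_{\Hil}$ on the right, becomes $(y\ot I)\beta(P_{\Kil}mP_{\Hil})(y'\ot I)$ by multiplicativity; since $P_{\Kil}mP_{\Hil}$ ranges over $\TX$ and $\beta|_{\TX}=\alpha$, while the surviving factors $y\ot P_{\Kil}$ and $y'\ot P_{\Hil}$ are identified with elements of $L^{\infty}(\hQG)\ot I_{\Kil}$ and $L^{\infty}(\hQG)\ot I_{\Hil}$ acting on $L^2(\QG)\ot\Kil$ and $L^2(\QG)\ot\Hil$, this equals $(y\ot I_{\Kil})\alpha(x)(y'\ot I_{\Hil})$ with $x\in\TX$ arbitrary. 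As cutting by fixed projections is weak$^*$-continuous and linear, applying it to the three displayed descriptions produces the three asserted descriptions of the corner $\QG\ltimes_{\alpha}\TX$: the defining left form, the two-sided form, and the right one-sided form.

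Finally, the corner $(I\ot P_{\Kil})(\QG\ltimes_{\beta}\lX)(I\ot P_{\Hil})$ of a von Neumann algebra is automatically a weak$^*$-closed subspace stable under $a b^* c$, hence a W*-TRO; as it equals the weak$^*$-closed linear span of $(L^{\infty}(\hQG)\ot I_{\Kil})\alpha(\TX)$, and every W*-TRO containing a set contains the weak$^*$-closed linear span of that set, this span is precisely the W*-TRO generated by $(L^{\infty}(\hQG)\ot I_{\Kil})\alpha(\TX)$. For independence of the representation I would invoke uniqueness of the linking von Neumann algebra (cf.\ \cite{BLM}): two faithful non-degenerate representations of the abstract W*-TRO $\TX$ yield linking algebras related by a normal $^*$-isomorphism that carries corner projections to corner projections and restricts to the identity on $\TX$; by uniqueness of Hamana extensions (Proposition~\ref{QHamextactions}) this isomorphism intertwines the two extended actions, hence induces an isomorphism of the von Neumann crossed products $\QG\ltimes_{\beta}\lX$ intertwining the amplified corner projections $I\ot P_{\Kil}$ and $I\ot P_{\Hil}$, which restricts to the desired W*-TRO isomorphism of the TRO crossed products. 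Granting the earlier results, the only genuinely delicate point is this last step, since the corner projections are tied to the chosen representation and one must check that the comparison isomorphism is both equivariant and corner-preserving; everything else is bookkeeping built on Proposition~\ref{Qcrossedcorner}.
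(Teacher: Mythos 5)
Your proof is correct and takes essentially the same approach as the paper, whose entire proof is the single sentence that the corollary ``follows immediately from the analogous facts for the von Neumann crossed products and Proposition~\ref{Qcrossedcorner}''. You have merely made explicit the bookkeeping the paper leaves implicit: the three equivalent forms of the von Neumann crossed product, the corner computation using that the extended action fixes the corner projections, and the equivariant comparison isomorphism giving representation-independence.
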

\begin{proof}
Follows immediately from the analogous facts for the von Neumann
crossed products and Proposition \ref{Qcrossedcorner}.
\end{proof}

If $\alpha: \TX \to L^{\infty}(\QG) \wot \TX$ is an action of $\QG$ on
a W*-TRO $\TX$, then the fixed point space of $\alpha$ is defined as
\[ \Fix \alpha = \{x \in \TX: \alpha(x) = I_{L^2(\QG)} \ot x\}.\]

\begin{propn} \label{qfixedcorner}
Assume that $\TX$ is a W*-TRO non-degenerately represented in some
$B(\Hil;\Kil)$, $\alpha$ is an action of $\QG$ on $\TX$ and $\beta$ is
an action of $\QG$ on $\lX$ introduced in Proposition
\ref{QHamextactions}. Then $\Fix{\alpha} = P_{\Kil} (\Fix{\beta})
P_{\Hil}$.
\end{propn}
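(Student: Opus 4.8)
The plan is to follow the argument of Corollary~\ref{fixedcorner}, with the family of pointwise conditions $\{\beta_g(z)=z\}$ used there replaced by the single coaction condition $\beta(z)=I_{L^2(\QG)}\ot z$ appropriate to the quantum setting. Two preliminary facts carry the proof. First, by Proposition~\ref{QHamextactions} the action $\beta$ is the Hamana extension of $\alpha$, so $\beta(x)=\alpha(x)$ for every $x\in\TX$ viewed in the upper right corner of $\lX$. Second, $\beta$ preserves the corner projections, in the sense that $\beta(P_{\Kil})=I_{L^2(\QG)}\ot P_{\Kil}$ and $\beta(P_{\Hil})=I_{L^2(\QG)}\ot P_{\Hil}$.

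Granting these, the inclusion $\Fix\alpha\subseteq P_{\Kil}(\Fix\beta)P_{\Hil}$ is immediate: if $x\in\Fix\alpha$ then $\beta(x)=\alpha(x)=I_{L^2(\QG)}\ot x$, so $x\in\Fix\beta$, and since $x$ lies in the upper right corner we have $x=P_{\Kil}xP_{\Hil}$. For the reverse inclusion, take $z\in\Fix\beta$ and put $x=P_{\Kil}zP_{\Hil}$, which lies in $\TX$; using that $\beta$ is a $^*$-homomorphism fixing the corner projections one computes
\[
\alpha(x)=\beta(P_{\Kil}zP_{\Hil})=\beta(P_{\Kil})\,\beta(z)\,\beta(P_{\Hil})=(I_{L^2(\QG)}\ot P_{\Kil})(I_{L^2(\QG)}\ot z)(I_{L^2(\QG)}\ot P_{\Hil})=I_{L^2(\QG)}\ot x,
\]
so that $x\in\Fix\alpha$.

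The only point needing genuine verification is the second preliminary fact. Here I would note that, $\alpha$ being non-degenerate, Proposition~\ref{extend}(ii) makes $\beta$ unital, whence $\beta(P_{\Kil})+\beta(P_{\Hil})=I=I_{L^2(\QG)}\ot P_{\Kil}+I_{L^2(\QG)}\ot P_{\Hil}$. Both $\beta(P_{\Kil})$ and $\beta(P_{\Hil})$ are projections, and by the algebraic form of the Hamana extension (recall $\beta(xx^*)=\alpha(x)\alpha(x)^*$, and symmetrically on $\TX^*$) together with the normality of $\beta$ they lie respectively in the upper left corner $L^{\infty}(\QG)\wot\la\TX\TX^*\ra''$ and the lower right corner $L^{\infty}(\QG)\wot\la\TX^*\TX\ra''$ of $\lY=L^{\infty}(\QG)\wot\lX$. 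Matching these two decompositions of the identity across orthogonal corners forces $\beta(P_{\Kil})=I_{L^2(\QG)}\ot P_{\Kil}$ and $\beta(P_{\Hil})=I_{L^2(\QG)}\ot P_{\Hil}$.

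I expect no real obstacle: the argument is formally identical to the classical case, the sole substantive difference being that the pointwise preservation of $P_{\Kil},P_{\Hil}$ invoked there is replaced by the coaction identities above, established by the corner-matching argument. The proof thus reduces to recording the two inclusions once this preservation has been noted.
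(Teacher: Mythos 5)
Your proof is correct and is essentially the paper's own argument: the paper's proof of this proposition is literally ``Follows as in Corollary~\ref{fixedcorner}'', i.e.\ the two-inclusion corner argument you give, transplanted from the pointwise conditions $\beta_g(z)=z$ to the coaction condition $\beta(z)=I_{L^2(\QG)}\ot z$. Your extra verification that $\beta(P_{\Kil})=I_{L^2(\QG)}\ot P_{\Kil}$ and $\beta(P_{\Hil})=I_{L^2(\QG)}\ot P_{\Hil}$ (unitality of the Hamana extension plus matching the two decompositions of the identity across the orthogonal corners) correctly supplies the detail that the classical proof dismisses as holding ``by construction''.
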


\begin{proof}
Follows as in Corollary \ref{fixedcorner}.
\end{proof}

Let $\alpha:\TX \to  L^{\infty}(\QG) \wot \TX$ be an action of
$\QG$ on $\TX$ and consider the following map $\delta: B(L^2(\QG))\wot \TX \to L^{\infty}(\QG) \wot B(L^2(\QG))\wot \TX$:
\begin{equation} \delta(z) = \chi_{12} (V_{12}^* (\id_{B(L^2(G))} \ot \alpha)(z)V_{12}), \qquad z \in B(L^2(\QG)) \wot \TX, \label{Qdelta}\end{equation}
where $V$ is the right multiplicative unitary of $\QG$
(compare to the formula \eqref{Adrhoalpha}, remembering that for quantum groups we denote simply by $\alpha$ what used to be $\pi_{\alpha}$).

The following result is a quantum version of Proposition \ref{crossedfixed}, this time following from the von Neumann algebraic result due to Enock \cite{Enock}, see also \cite{KNR2}.

\begin{tw}\label{EnockTRO}
Let $\TX$ be a W*-TRO  and let $\QG$ be a locally compact quantum
group acting on $\TX$ via $\alpha:\TX \to  L^{\infty}(\QG) \wot\TX$. The map
$\delta: B(L^2(\QG))\wot \TX \to L^{\infty}(\QG) \wot B(L^2(\QG))\wot\TX$
defined by \eqref{Qdelta} is an action of $\QG$
on the W*-TRO $B(L^2(\QG))\wot \TX $. Moreover we have the following
equality:
\[ \QG \ltimes_{\alpha} \TX = \Fix \delta.\]
\end{tw}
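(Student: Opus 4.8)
The plan is to reduce the TRO statement to the corresponding von Neumann algebraic result, exactly as was done for classical groups in Proposition \ref{crossedfixed}, by passing to the linking von Neumann algebra $\lX$ and identifying everything through Hamana extensions. First I would invoke Proposition \ref{QHamextactions} to extend $\alpha$ to an action $\beta:\lX \to L^{\infty}(\QG) \wot \lX$ of $\QG$ on the von Neumann algebra $\lX$. The key point is then that the map $\delta$ on $B(L^2(\QG)) \wot \TX$ defined by \eqref{Qdelta} is the restriction (to the relevant corner) of the analogous map
\[
\wt\delta(z) = \chi_{12}\bigl(V_{12}^*\,(\id_{B(L^2(\QG))} \ot \beta)(z)\,V_{12}\bigr), \qquad z \in B(L^2(\QG)) \wot \lX,
\]
which, by the von Neumann algebraic theory (Enock \cite{Enock}, see also \cite{KNR2}), is a genuine action of $\QG$ on $B(L^2(\QG)) \wot \lX$. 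By uniqueness of Hamana extensions (Proposition \ref{extend}), $\wt\delta$ is precisely the extension of $\delta$ to the linking von Neumann algebra of $B(L^2(\QG)) \wot \TX$, using the identification $\mathsf{R}_{B(L^2(\QG)) \wot \TX} \cong B(L^2(\QG)) \wot \lX$ from the remarks before Lemma \ref{lemma:fubini}. This already shows that $\delta$ is an action of $\QG$ on the W*-TRO $B(L^2(\QG)) \wot \TX$, proving the first assertion.

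For the main equality, I would combine two facts. On the von Neumann side, Enock's theorem gives $\QG \ltimes_{\beta} \lX = \Fix \wt\delta$. On the TRO side, Proposition \ref{qfixedcorner} (the quantum analogue of Corollary \ref{fixedcorner}), applied to the action $\delta$ and its extension $\wt\delta$, yields
\[
\Fix \delta = (I_{L^2(\QG)} \ot P_{\Kil})\,(\Fix \wt\delta)\,(I_{L^2(\QG)} \ot P_{\Hil}),
\]
where $P_{\Kil}, P_{\Hil}$ are the defining projections of the corner $B(L^2(\QG)) \wot \TX$ inside $B(L^2(\QG)) \wot \lX$. Meanwhile Proposition \ref{Qcrossedcorner} identifies the crossed product $\QG \ltimes_{\alpha} \TX$ as the same corner of $\QG \ltimes_{\beta} \lX$. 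Chaining these identifications gives
\[
\QG \ltimes_{\alpha} \TX = (I_{L^2(\QG)} \ot P_{\Kil})\,(\QG \ltimes_{\beta} \lX)\,(I_{L^2(\QG)} \ot P_{\Hil}) = (I_{L^2(\QG)} \ot P_{\Kil})\,(\Fix \wt\delta)\,(I_{L^2(\QG)} \ot P_{\Hil}) = \Fix \delta,
\]
which is the claim.

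I expect the main obstacle to be the bookkeeping that makes the reduction clean: one must verify carefully that the projections cutting out the TRO corner of $B(L^2(\QG)) \wot \TX$ really are $(I_{L^2(\QG)} \ot P_{\Kil})$ and $(I_{L^2(\QG)} \ot P_{\Hil})$, and that $\wt\delta$ genuinely coincides with the canonical action whose fixed-point space Enock computes — the potential trap is the presence of the flip $\chi_{12}$ and the placement of $V_{12}$ in formula \eqref{Qdelta}, so one must check that Enock's convention (phrased via the dual and the multiplicative unitary) matches ours, just as the classical case in Proposition \ref{crossedfixed} had to reconcile with Takesaki--Digernes. Once the identification of $\wt\delta$ with the canonical von Neumann algebraic action is pinned down and the corner projections are confirmed to be preserved by $\wt\delta$ (which follows because $\beta$ is the Hamana extension and so fixes $P_{\Kil}, P_{\Hil}$, exactly as in the computation in the proof of Corollary \ref{fixedcorner}), the remaining steps are routine and the proof can simply cite the relevant earlier propositions.
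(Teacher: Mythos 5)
Your proposal is correct and follows essentially the same route as the paper: both pass to the linking-algebra action $\beta$ via Proposition \ref{QHamextactions}, invoke Theorem 2.3 of \cite{KNR2} (Enock's theorem) to get that the map $\wt\delta$ (the paper's $\gamma$ of \eqref{Qgamma}) is an action on $B(L^2(\QG))\wot\lX$ with $\QG\ltimes_{\beta}\lX=\Fix\wt\delta$, identify $\delta$ as its corner restriction so that $\wt\delta$ is the Hamana extension of $\delta$ (the paper makes this step explicit through Corollary \ref{extendconverse} and Proposition \ref{extend}), and then conclude by chaining Propositions \ref{Qcrossedcorner} and \ref{qfixedcorner}. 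The convention issue you flag is harmless here, since the cited Theorem 2.3 of \cite{KNR2} is stated with exactly the formula \eqref{Qgamma} mirroring \eqref{Qdelta}.
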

\begin{proof}
Let $\TX$ be non-degenerately represented in $B(\Hil;\Kil)$ and let
$\beta$ be the action of $\QG$ on $\lX$ provided in Theorem
\ref{QHamextactions}. Then Theorem 2.3 of \cite{KNR2}, which is a
simplified version of Theorem 11.6 of \cite{Enock}, says that the map
$\gamma: B(L^2(\QG))\wot \lX \to L^{\infty}(\QG) \wot B(L^2(\QG))\wot \lX$:
\begin{equation} \gamma(t) = \chi_{12} (V_{12}^* (\id_{B(L^2(G))} \ot \beta)(t)V_{12}), \qquad t \in B(L^2(\QG)) \wot \lX, \label{Qgamma}\end{equation}
is an action of $\QG$ on $\lX$ and
\begin{equation} \label{Fixgamma}\QG \ltimes_{\beta} \lX = \Fix \gamma.\end{equation}
It is easy to verify that in fact for
$z \in B(L^2(\QG)) \wot \TX \subset  B(L^2(\QG)) \wot \lX$ we have
\[ \delta(z) = (I_{\ltwo(\QG)} \ot I_{\ltwo(\QG)}\ot P_{\Kil})
       \gamma(z) (I_{\ltwo(\QG)} \ot I_{\ltwo(\QG)} \ot P_{\Hil}).\]
This implies, via Corollary \ref{extendconverse}, that $\delta$ is a normal TRO morphism, whose Hamana extension is $\gamma$. An explicit computation and another application of Proposition \ref{extend} show that $\delta$ is in fact an action of $\QG$ on the W*-TRO $B(L^2(\QG))\wot \TX$, with $\gamma$ clearly being the extension of $\delta$ provided by Proposition \ref{QHamextactions}. Then formula \eqref{Fixgamma} and Propositions \ref{Qcrossedcorner} and \ref{qfixedcorner} end the proof.
\end{proof}

We finish this section by discussing certain connections between the TROs arising as fixed point spaces of completely contractive maps, studied in Section 1,  and (quantum) group actions.

\begin{propn} \label{almostaction}
Let $\mlg$ be a von Neumann algebra, and let $\beta:\mlg \to \Linf(\QG)\wot
\mlg  $ be an action of a locally compact quantum group on $\mlg$.
Let $\wt{\TX}$ be a weak$^*$-closed subspace of $\mlg$, and let
$P:\mlg \to \mlg$ be a completely contractive idempotent map such
 that $P(\mlg) = \wt{\TX}$  and
\begin{equation} \beta \circ P = (\id_{\Linf(\QG)} \ot_F P) \circ \beta.  \label{commutPalpha} \end{equation}
Then the formula \textup{(}see Proposition \ref{TROconst}\/\textup{)}
\[   \alpha = (\id_{\Linf(\QG)}\wot\iota )\circ \beta \circ  \iota^{-1}\]
defines a normal, injective TRO morphism $\alpha:\TX \to \Linf(\QG) \wot \TX$ satisfying the action equation
\[(\id_{L^{\infty}(\QG)} \ot \alpha)\circ \alpha = (\Com_{\QG} \ot \id_{\mlg})\circ \alpha.\]
\end{propn}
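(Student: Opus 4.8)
The plan is to realise $\alpha$ as a restriction of $\beta$ and to deduce all four required properties (well-definedness, normality, injectivity, ternary-multiplicativity, and the action equation) from the corresponding properties of $\beta$ together with the interplay between $P$ and the Youngson construction of Proposition \ref{TROconst}.

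First I would check that the formula defining $\alpha$ even makes sense, i.e.\ that $\beta$ maps $\wt{\TX}$ into $\Linf(\QG) \wot \wt{\TX}$. Since $P$ is idempotent with range $\wt{\TX}$, for $x \in \wt{\TX}$ we have $Px = x$, so by \eqref{commutPalpha}
\[ \beta(x) = \beta(Px) = (\id_{\Linf(\QG)} \ot_F P)(\beta(x)). \]
The Fubini extension $\id_{\Linf(\QG)} \ot_F P$ takes values in $\Linf(\QG) \ot_F \wt{\TX}$, which by Lemma \ref{lemma:fubini} (applied with the identity projection on the von Neumann algebra $\Linf(\QG)$ and the projection $P$ onto $\wt{\TX}$) equals the spatial tensor product $\Linf(\QG) \wot \wt{\TX}$. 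Hence $\beta(x) \in \Linf(\QG)\wot\wt{\TX}$, and $\alpha = (\id_{\Linf(\QG)} \wot \iota) \circ \beta|_{\wt{\TX}} \circ \iota^{-1}$ is well defined as a map $\TX \to \Linf(\QG) \wot \TX$. As $\iota,\iota^{-1}$ are normal complete isometries (Proposition \ref{TROconst}), the map $\id_{\Linf(\QG)} \wot \iota$ is a normal complete isometry of the corresponding spatial tensor products (by functoriality of the normal spatial tensor product for normal complete isometries of dual operator spaces, cf.\ Lemma \ref{lemma:fubini} and Chapter 7 of \cite{EfR}), and since $\beta$ is a normal injective $^*$-homomorphism, $\alpha$ is automatically normal and injective, indeed a complete isometry.

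The heart of the argument, and the main obstacle, is ternary-multiplicativity: I must match the ternary product of the target $\Linf(\QG)\wot\TX$ with the one produced by the Youngson construction on $\Linf(\QG)\wot\wt{\TX}$. Write $Q := \id_{\Linf(\QG)} \ot_F P$, a completely contractive idempotent (idempotent because $P$ is) on the von Neumann algebra $\Linf(\QG)\wot\mlg$ with range $\Linf(\QG)\wot\wt{\TX}$ by Lemma \ref{lemma:fubini}. Applying Proposition \ref{TROconst} to $Q$ endows $\Linf(\QG)\wot\wt{\TX}$ with a W*-TRO structure whose ternary product is $\{U,V,W\} = Q(UV^*W)$ and whose operator space structure is the one inherited from $\Linf(\QG)\wot\mlg$. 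Since $\iota$ is a complete isometry, the spatial W*-TRO $\Linf(\QG)\wot\TX$ carries the same operator space structure, and $\id_{\Linf(\QG)}\wot\iota$ is a completely isometric weak$^*$-homeomorphism between these two W*-TROs. By the uniqueness of the W*-TRO structure compatible with a given operator space structure (Proposition 2.1 of \cite{Hamana}, exactly as invoked in Proposition \ref{TROconst} and Theorem \ref{CE_TRO}), this map is therefore a TRO isomorphism. Granting this identification, the computation is direct: for $a,b,c \in \TX$, using that $\iota^{-1}$ is the identity on underlying elements, the Youngson formula on $\TX$, the intertwining relation \eqref{commutPalpha}, and that $\beta$ is a $^*$-homomorphism,
\begin{align*}
\alpha(\{a,b,c\}) &= (\id_{\Linf(\QG)}\wot\iota)\bigl(\beta(P(ab^*c))\bigr) = (\id_{\Linf(\QG)}\wot\iota)\bigl(Q(\beta(a)\beta(b)^*\beta(c))\bigr) \\
&= (\id_{\Linf(\QG)}\wot\iota)\bigl(\{\beta(a),\beta(b),\beta(c)\}\bigr) = \{\alpha(a),\alpha(b),\alpha(c)\},
\end{align*}
where $ab^*c$ is computed in $\mlg$, the third equality uses that $\beta(a),\beta(b),\beta(c)$ lie in $\Linf(\QG)\wot\wt{\TX}$ so that $Q$ computes the Youngson product, and the last uses that $\id_{\Linf(\QG)}\wot\iota$ is a TRO isomorphism. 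Thus $\alpha$ is a normal injective TRO morphism.

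Finally, the action equation follows by transporting that of $\beta$. Since $\beta(\wt{\TX}) \subset \Linf(\QG)\wot\wt{\TX}$ and all maps involved are normal, the composite $(\id_{\Linf(\QG)}\ot\iota^{-1})\circ(\id_{\Linf(\QG)}\wot\iota)$ is the identity on $\Linf(\QG)\wot\wt{\TX}$, which gives $(\id_{\Linf(\QG)}\ot\alpha)\circ\alpha = (\id_{\Linf(\QG)}\ot\id_{\Linf(\QG)}\wot\iota)\circ(\id_{\Linf(\QG)}\ot\beta)\circ\beta\circ\iota^{-1}$ on $\TX$. Applying the action equation $(\id\ot\beta)\circ\beta = (\Com\ot\id)\circ\beta$ for $\beta$, and using that $\Com\ot\id$ acts on the $\Linf(\QG)$-legs while $\iota$ acts on the $\wt{\TX}$-leg (so the two commute), this equals $(\id_{\Linf(\QG)}\ot\id_{\Linf(\QG)}\wot\iota)\circ(\Com\ot\id)\circ\beta\circ\iota^{-1} = (\Com\ot\id_{\TX})\circ\alpha$, as required. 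Beyond routine bookkeeping of the tensor legs, the only points needing care are the normality that legitimises restricting and extending these maps to the tensor products, both already secured above.
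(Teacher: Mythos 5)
Your proof follows essentially the same route as the paper's: the paper's own proof consists precisely of your central computation
\begin{align*}
\alpha(\{\iota(x),\iota(y),\iota(z)\})
&= (\id\wot\iota)\bigl(\beta(P(xy^*z))\bigr)
= (\id\wot\iota)\bigl((\id\ot_F P)(\beta(x)\beta(y)^*\beta(z))\bigr)\\
&= \{\alpha(\iota(x)),\alpha(\iota(y)),\alpha(\iota(z))\},
\end{align*}
with normality attributed to $\iota$ being a weak$^*$-homeomorphism, and injectivity plus the action equation left as easy checks from the corresponding properties of $\beta$. What you add --- the verification that $\beta(\wt\TX)\subset\Linf(\QG)\wot\wt\TX$ so that the formula makes sense, and the identification of the Youngson product induced by $Q=\id\ot_F P$ on $\Linf(\QG)\wot\wt\TX$ with the concrete ternary product of $\Linf(\QG)\wot\TX$ via Proposition 2.1 of \cite{Hamana} --- is exactly what the paper leaves implicit, and your appeal to Hamana's uniqueness is in the same spirit as its use in Proposition \ref{TROconst} and Theorem \ref{CE_TRO}.

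One justification, however, is wrong as stated and needs repair. You invoke Lemma \ref{lemma:fubini} ``applied with the identity projection on $\Linf(\QG)$ and the projection $P$ onto $\wt{\TX}$''. The hypothesis of that lemma is weak$^*$ (i.e.\ normal) completely contractive complementation, and its proof genuinely uses normality of the witnessing projections (they must extend normally to the von Neumann algebraic tensor product); but $P$ is not assumed normal --- indeed the remark following the proposition stresses that in the intended applications it cannot be. So $P$ cannot witness the lemma's hypothesis. The conclusion you want, namely $\Linf(\QG)\ot_F\wt\TX = \Linf(\QG)\wot\wt\TX$, is nevertheless correct: $\wt\TX$ is completely isometrically and weak$^*$-homeomorphically identified via $\iota$ with the W*-TRO $\TX$, and W*-TROs do satisfy the lemma's assumptions through the normal corner compression in the linking von Neumann algebra --- which is precisely the parenthetical note in Lemma \ref{lemma:fubini}. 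A related small gloss: your claim that $Q$ has range equal to (rather than merely contained in) $\Linf(\QG)\wot\wt\TX$ cannot use normality of $Q$ either; it follows instead from the slice characterisation of $\id\ot_F P$, which shows that $Q$ fixes every element of $\Linf(\QG)\ot_F\wt\TX$. With these substitutions your argument is complete.
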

\begin{proof}
For $x,y,z\in \wt \TX$, we have
\begin{align*}
\alpha(\{\iota(x),\iota(y),\iota(z)\}_\TX)
&=(\id_{\Linf(\QG)}\wot\iota )\circ \beta (P(xy^* z))
\\&= (\id_{\Linf(\QG)}\wot\iota )\circ (\id_{\Linf(\QG)} \ot_F P)
(\beta (x)\beta(y)^*\beta(z))\\&= \{\alpha(\iota(x)),\alpha(\iota(y)),\alpha(\iota(z))\}_ {\Linf(\QG)\wot  \TX}.
\end{align*}
Hence $\alpha$ is a TRO morphism, and it is normal
because $\iota$ is a weak*-homeomorphism.
It is also easy to check that $\alpha$ is injective and satisfies the
coassociativity condition, using the corresponding  properties of
$\beta$.
\end{proof}

\begin{remark}
We do not know whether the map constructed above is an action of $\QG$
on $\TX$, as it is not clear whether it is non-degenerate. Let us
sketch a natural approach to proving non-degeneracy, so that it is
clear where it breaks down. Using the notations of the last
proposition we should show that the weak*-closed linear span of
elements of the form
\[
 \{ \alpha(\iota(x)), a\ot \iota(y), b\ot \iota(z) \}_{\Linf(\QG)\wot  \TX},
\qquad x,y,z\in \wt \TX, a,b\in \Linf(\QG),
\]
is equal to $\Linf(\QG)\wot  \TX$.
Writing $x = P(m)$ for $m\in \mlg$, we have
\begin{align*}
\{ \alpha(\iota(x)), a\ot \iota(y), b\ot \iota(z) \}_{\Linf(\QG)\wot  \TX}
&= \{ (\id_{\Linf(\QG)}\ot \iota P)\beta(m), a\ot \iota(y), b\ot \iota(z) \}_{\Linf(\QG)\wot
  \TX} \\
&= (\id_{\Linf(\QG)} \ot \iota P)\bigl(\beta(m)(a^* b \ot y^* z)\bigr)
\end{align*}
(where we used Youngson's identity
$P(P(m_1)P(m_2)^*P(m_3))=P(m_1P(m_2)^*P(m_3))$). In other words, it
suffices to show that
$(\id \ot P) (\beta(\mlg)(\Linf(\QG) \wot \wt{\TX}^* \wt{\TX}))$
is linearly weak$^*$-dense in $\Linf(\QG) \wot \wt{\TX}$. Now we know on one hand via
Proposition 2.9 of \cite{KS} that $\beta(\mlg)(\Linf(\QG)\ot 1)$ is linearly
weak$^*$-dense in $\Linf(\QG)\wot \mlg$  and on the other hand that $P(\mlg \wt{\TX}^* \wt{\TX})$ is weak$^*$-dense in $\wt{\TX}$ (essentially because $\TX$ is a W*-TRO). Combining these two facts brings us close to completing the proof, but $P$ is not assumed to be
normal (and cannot be for applications).
Note that in the positive case (by which we mean the case where $P$ is a completely positive projection and
$\TX$ is a von Neumann algebra) the argument goes through simply by choosing
$m = 1$.

\end{remark}

 The problem disappears in the case $\QG$ is a classical group, as then we can rather use the pointwise picture of the actions.

\begin{tw} \label{actiononFixedPoints}
Let $\mlg$ be a von Neumann algebra, and let $\beta:G \to \Aut(\mlg) $ be
an action of a locally compact group $G$ on $\mlg$.
 Let $\wt{\TX}$ be a weak$^*$-closed subspace of $\mlg$, and let
 $P:\mlg \to \mlg$ be a completely contractive idempotent map such
 that $P(\mlg) = \wt{\TX}$  and
\begin{equation} \beta_g \circ P =  P \circ \beta_g, \qquad g \in G.  \label{commutPpoint} \end{equation}
Then the formula \textup{(}see Proposition \ref{TROconst}\textup{)}
\[   \alpha_g = \iota \circ \beta_g \circ  \iota^{-1}, \qquad g \in G,\]
defines an action $\alpha$ of $G$ on the W*-TRO $\TX$.
\end{tw}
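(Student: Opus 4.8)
The plan is to transport the pointwise action $\beta$ of $G$ on $\mlg$ through the Choi--Effros isomorphism $\iota:\wt\TX\to\TX$ and verify the two defining properties of a group action on a W*-TRO: that each $\alpha_g$ is a normal TRO automorphism, and that the orbit maps $\alpha^x$ are weak$^*$-continuous. Unlike the quantum case treated in Proposition~\ref{almostaction} and the subsequent remark, the classical pointwise framework sidesteps the troublesome non-degeneracy issue entirely, because Definition~\ref{def:group-act} requires only that $\alpha:G\to\Aut(\TX)$ be a homomorphism with weak$^*$-continuous orbit maps -- there is no separate non-degeneracy hypothesis to check.

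First I would verify that for each fixed $g\in G$ the map $\alpha_g=\iota\circ\beta_g\circ\iota^{-1}$ is a normal TRO automorphism of $\TX$. Normality is immediate since $\iota$ and $\iota^{-1}$ are weak$^*$-homeomorphisms (Proposition~\ref{TROconst}) and $\beta_g$ is normal. Bijectivity is clear. The key point is that $\alpha_g$ preserves the ternary product; this is where the commutation relation \eqref{commutPpoint} enters. Writing $P$ for the completely contractive idempotent onto $\wt\TX$, and recalling that the ternary product on $\TX$ is given via $\{a,b,c\}=P(ab^*c)$ transported through $\iota$, I would compute for $x,y,z\in\wt\TX$
\[ \alpha_g\bigl(\{\iota(x),\iota(y),\iota(z)\}_\TX\bigr) = \iota\bigl(\beta_g(P(xy^*z))\bigr) = \iota\bigl(P(\beta_g(x)\beta_g(y)^*\beta_g(z))\bigr), \]
using \eqref{commutPpoint} together with the fact that $\beta_g$ is a normal $^*$-automorphism of $\mlg$, hence respects products and adjoints. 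The right-hand side equals $\{\alpha_g(\iota(x)),\alpha_g(\iota(y)),\alpha_g(\iota(z))\}_\TX$, confirming that $\alpha_g$ is a TRO morphism. The homomorphism property $\alpha_g\circ\alpha_h=\alpha_{gh}$ follows routinely, since the conjugations by $\iota$ telescope and $\beta$ is itself a homomorphism.

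Next I would establish the continuity requirement of Definition~\ref{def:group-act}: for each $X\in\TX$ the orbit map $g\mapsto\alpha_g(X)$ is weak$^*$-continuous. Writing $X=\iota(x)$ with $x=\iota^{-1}(X)\in\wt\TX$, we have $\alpha_g(X)=\iota(\beta_g(x))$. Since $\beta$ is an action of $G$ on $\mlg$, the map $g\mapsto\beta_g(x)$ is weak$^*$-continuous into $\mlg$; composing with the weak$^*$-continuous map $\iota$ gives weak$^*$-continuity of $g\mapsto\alpha_g(X)$ into $\TX$. This is the only place the continuity of the original action is used, and it passes through transparently because $\iota$ is a weak$^*$-homeomorphism.

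I expect the main (indeed only) subtlety to be the verification that $\alpha_g$ preserves the ternary product, and specifically the correct use of Youngson's identity in the form recorded in the excerpt, namely $P(P(m_1)P(m_2)^*P(m_3))=P(m_1 P(m_2)^* m_3)$ and its variants, to ensure that applying $P$ after $\beta_g$ lands back in $\wt\TX$ with the transported product computed correctly. Everything else -- normality, bijectivity, the homomorphism property, and continuity -- reduces to formal manipulations with the weak$^*$-homeomorphism $\iota$ and the established properties of $\beta$. No genuine obstacle arises here, in contrast to the quantum setting, precisely because the pointwise description of the action lets us avoid any appeal to non-degeneracy of an integrated map.
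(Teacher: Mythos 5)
Your proposal is correct and follows essentially the same route as the paper: transport $\beta_g$ through the weak$^*$-homeomorphism $\iota$, verify the ternary-product preservation from the commutation relation \eqref{commutPpoint} exactly as in Proposition \ref{almostaction}, get the homomorphism property by telescoping, and obtain continuity of the orbit maps from that of $\beta$ composed with $\iota$. (One small remark: the ternary-product computation needs only \eqref{commutPpoint} and multiplicativity of $\beta_g$, not Youngson's identity, which enters only in establishing Proposition \ref{TROconst} itself.)
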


\begin{proof}
The fact that each $\alpha_g$ ($g \in G$) is a normal TRO morphism follows as in the last proposition; as we have for $g,h \in G$
\[ \alpha_g \circ \alpha_h = \iota \circ \beta_g \circ  \iota^{-1} \circ  \iota \circ \beta_h \circ  \iota^{-1} =  \iota \circ \beta_g \circ  \beta_h \circ  \iota^{-1} =
 \iota \circ \beta_{gh} \circ  \iota^{-1} = \alpha_{gh}\]
 and $\alpha_e=\id_{\TX}$, each $\alpha_g$ is in fact an automorphism, and $\alpha: G \to \Aut(\TX)$ is a homomorphism. Finally the continuity condition follows from that for $\beta$: if $(g_i)_{i \in \Ind}$ is a net of elements of $G$ converging to $g \in G$ and $x\in \TX$, then, as $\iota$ is a homeomorphism with respect to weak $^*$-topologies, we have
 \begin{align*} \mathrm{w^*-}\lim_{i \in \Ind} \alpha^x(g_i)
   &= \mathrm{w^*-}\lim_{i \in \Ind} \alpha_{g_i}(x)
    = \mathrm{w^*-}\lim_{i \in \Ind} \iota (\beta_{g_i} (\iota^{-1}(x)))
    = \iota (\mathrm{w^*-}\lim_{i \in \Ind} \beta_{g_i}
    (\iota^{-1}(x)))
    \\&= \iota ( \beta_{g} (\iota^{-1}(x))) = \alpha^x(g).\end{align*}
\end{proof}

Note that although the projection $P$ features in one of the conditions both in Proposition \ref{almostaction} and Theorem \ref{actiononFixedPoints}, the actual maps constructed there depend only on its image.

\section{Poisson boundaries associated with contractive functionals in $C_0^u(\QG)^*$}

Let $\QG$ be a locally compact quantum group,
let $C_0\sp u(\QG)$  be the universal C*-algebra
associated with $\QG$, and
let $\cop_u$ be the coproduct on $C_0\sp u(\QG)$ (see  \cite{kus:univ}).
The Banach space dual of $C_0\sp u(\QG)$ will be denoted $M\sp u(\QG)$ and
called the \emph{measure algebra of $\QG$}.
It is a Banach algebra with the product defined by
\[
\mu \star \nu := (\mu \ot \nu) \circ \Com_u, \quad\mu,\nu
\in M\sp u(\QG).
\]

Given $\mu\in M\sp u(\QG)$ the associated \emph{right convolution}
operator $R_{\mu}:L\sp\infty(\QG) \to L\sp\infty(\QG)$
is defined by the formula
\[
\la R_{\mu} (x),\omega\ra = \la \omega\conv\mu, x\ra, \quad
x\in \Linf(\QG), \omega\in\lone(\QG).
\]
This is well-defined as $\lone(\QG)$ is an ideal in $M\sp u(\QG)$.
Moreover, $ R_\mu$ is normal.

We are ready to apply the results of the earlier sections to
the construction of
extended Poisson boundaries for contractive (not-necessarily
positive) quantum measures.
We say that $\mu\in M\sp u(\QG)$ is \emph{contractive} if $\|\mu\|\le 1$.

\begin{theorem} \label{extended}
Let $\mu \in M\sp u(\QG)$ be contractive.
Then the fixed point space $\Fix R_{\mu}:=\{x\in L^{\infty}(\QG):
R_{\mu}(x) = x\}$ has a unique \textup{(}up to a weak$^*$-continuous complete
isometry\textup{)} structure of a W*-TRO, which we will denote $\TX_{\mu}$.
\end{theorem}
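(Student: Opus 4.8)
The plan is to realise $\Fix R_\mu$ as the fixed point space of a single normal completely contractive map on a von Neumann algebra and then to invoke Theorem~\ref{CE_TRO} directly. The key observation is that $R_\mu$ is itself a map on $L^\infty(\QG)$ of exactly the type treated there: the paper has already recorded that $R_\mu$ is normal, and contractivity of $\mu$ (meaning $\|\mu\|\le 1$) should give complete contractivity of $R_\mu$. First I would verify this last point. Since $R_\mu$ is defined by pre-composing functionals with right convolution by $\mu$, its matrix amplifications $R_\mu^{(n)}$ on $M_n(L^\infty(\QG))$ are given by the analogous formula with $\mu$ acting trivially on the matrix legs; one identifies $R_\mu^{(n)}$ with a right convolution operator associated with $\mu$ on $M_n(L^\infty(\QG)) \cong L^\infty(\QG) \wot M_n(\bc)$, and the norm bound $\|\mu\|\le 1$ propagates to the amplifications. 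Thus $R_\mu$ is a completely contractive normal map.

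With complete contractivity established, Theorem~\ref{CE_TRO} applies verbatim with $\mlg = L^\infty(\QG)$ and $P = R_\mu$. It gives that $\Fix R_\mu$ is a weak$^*$-closed subspace of $L^\infty(\QG)$ carrying a \emph{unique} ternary product making it a W*-TRO, explicitly realised via the Ces\`aro limit $\wt{R_\mu}_\beta$ of Theorem~\ref{CE_TRO}. I would simply set $\TX_\mu := \Fix R_\mu$ equipped with this structure, invoking the cited theorem for both existence and uniqueness of the product. The uniqueness statement of Theorem~\ref{CE_TRO} is precisely the assertion that there is a unique ternary product turning this dual operator space into a W*-TRO, which is the content we need; the qualification ``up to a weak$^*$-continuous complete isometry'' in the present theorem matches the intrinsic (representation-independent) nature of the abstract W*-TRO structure guaranteed by Zettl's characterisation and Proposition~\ref{TROconst}.

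The only genuine verification, and hence the main (though minor) obstacle, is the complete contractivity of $R_\mu$ from the hypothesis $\|\mu\|\le 1$. One must check that the matricial amplifications are again convolution operators of the same form so that the operator-space norm of $R_\mu$ is controlled by $\|\mu\|$; this uses that the coproduct $\cop_u$ and the duality pairing behave well under amplification, so that $R_\mu^{(n)}$ is the convolution operator associated to $\mu$ regarded as a functional on $C_0^u(\QG) \ot M_n(\bc)$ that is supported on the first leg. I expect this to be a routine but necessary computation; everything else is a direct citation of Theorem~\ref{CE_TRO}.
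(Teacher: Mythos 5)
Your proposal is correct and follows exactly the paper's own route: the paper's proof is a one-line appeal to Theorem~\ref{CE_TRO} with $\mlg = L^\infty(\QG)$ and $P = R_\mu$, asserting (as you verify in slightly more detail via the amplifications, or equivalently via writing $R_\mu$ as a slice of a $^*$-homomorphism by $\mu$) that $R_\mu$ is a normal complete contraction whenever $\|\mu\|\le 1$.
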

\begin{proof}
This is an immediate consequence of Theorem \ref{CE_TRO}, as
$R_{\mu}:L^{\infty}(\QG) \to L^{\infty}(\QG)$ is a complete
contraction.
\end{proof}

We will call the space $\Fix R_{\mu}$ with the W*-TRO structure
induced via Theorem \ref{extended} an \emph{extended Poisson boundary
  associated} to $\mu$.

\begin{lem} Let $\mu \in M\sp u(\QG)$ be contractive. The extended Poisson
  boundary $\Fix R_{\mu}$ is a unital subspace of $\Linf (\QG)$ iff
  $\mu$ is a state.
\end{lem}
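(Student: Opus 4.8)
The plan is to reduce the statement to a single computation of $R_\mu$ on the unit $1$ of $\Linf(\QG)$ and then to invoke a classical positivity criterion for functionals on C*-algebras.

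First I would compute $R_\mu(1)$. Recall that for $\omega\in L^1(\QG)$ the convolution $\omega\conv\mu$ again lies in $L^1(\QG)$, so that $\la R_\mu(1),\omega\ra = \la \omega\conv\mu, 1\ra$ is the ordinary duality pairing of an element of $L^1(\QG)$ with $1\in\Linf(\QG)$. Writing $\omega\conv\mu = (\omega\ot\mu)\circ\cop_u$ and using that the (multiplier extension of the) coproduct is unital, $\cop_u(1)=1\ot 1$, I obtain
\[ \la R_\mu(1),\omega\ra = (\omega\ot\mu)(1\ot 1) = \la\omega,1\ra\,\la\mu,1\ra, \]
where $\la\mu,1\ra$ denotes the value at the unit of the unique norm-preserving strictly continuous extension of $\mu$ to $M(C_0^u(\QG))$, i.e.\ the total mass of $\mu$. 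Since this holds for every $\omega$, it yields $R_\mu(1)=\la\mu,1\ra\,1$. The point requiring care here is the passage to the multiplier algebra, as $C_0^u(\QG)$ is non-unital: one must check that $\omega\conv\mu$ pairs with $1\in\Linf(\QG)$ exactly as the strict extension of $\omega\conv\mu$ evaluates at $1\in M(C_0^u(\QG))$, and that $\cop_u(1)=1\ot 1$ indeed holds at the multiplier level.

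With this in hand the equivalence follows in both directions. A subspace of $\Linf(\QG)$ is unital precisely when it contains $1$, and since $\Fix R_\mu = \{x : R_\mu(x)=x\}$ by definition, we have $1\in\Fix R_\mu$ if and only if $R_\mu(1)=1$, i.e.\ if and only if $\la\mu,1\ra = 1$. If $\mu$ is a state, its total mass is $1$, so $1\in\Fix R_\mu$ and the boundary is unital. Conversely, if $\Fix R_\mu$ is unital then $\la\mu,1\ra = 1$; combined with the contractivity hypothesis this gives $1 = |\la\mu,1\ra| \le \|\mu\| \le 1$, so that $\|\mu\| = \la\mu,1\ra = 1$.

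Finally I would invoke the standard fact that a bounded functional $f$ on a unital C*-algebra with $\|f\| = f(1)$ is automatically positive (hence a state when $f(1)=1$). Applying this to the extension of $\mu$ on the unital C*-algebra $M(C_0^u(\QG))$ shows that $\mu$ is positive on $C_0^u(\QG)$ with $\mu(1)=\|\mu\|=1$, i.e.\ $\mu$ is a state, completing the equivalence. I expect the only genuinely delicate step to be the first one — making the total-mass computation rigorous across the non-unital $C_0^u(\QG)$ and its multiplier algebra — while the reformulation of ``unital'' and the positivity criterion are routine.
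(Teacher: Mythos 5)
Your proof is correct and takes essentially the same route as the paper's: both reduce unitality of $\Fix R_\mu$ to the identity $R_\mu(I_{L^\infty(\QG)})=I_{L^\infty(\QG)}$, translate this into $\mu(I_{MC_0^u(\QG)})=1$ via the (unital, strictly extended) coproduct, and then use contractivity together with the standard norm-attainment criterion to conclude that $\mu$ is a state. The paper merely records this chain of equivalences in one line; you have supplied the multiplier-algebra details it leaves implicit.
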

\begin{proof}
  Follows from the equivalence $R_{\mu}(I_{L^\infty(\QG)})=I_{L^\infty(\QG)}$
  iff $\mu(I_{MC_0^u(\QG)})=1$ iff $\mu$ is a state
  (here $MC_0^u(\QG)$ denotes the multiplier algebra of $C_0^u(\QG)$).
\end{proof}

\begin{cor} \label{trivbound}
A locally compact quantum group $\QG$ is amenable if and only if there
exists a contractive $\mu \in M\sp u(\QG)$ such that
$\Fix R_{\mu} = \bc I_{L^\infty(\QG)}$.
\end{cor}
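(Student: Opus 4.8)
The plan is to prove both implications by choosing convenient contractive measures. For the ``if'' direction, I would argue contrapositively: suppose $\QG$ is \emph{not} amenable and take any contractive $\mu \in M^u(\QG)$; I want to show $\Fix R_\mu$ cannot reduce to $\bc I_{L^\infty(\QG)}$. The natural idea is that a nontrivial amount of ``mass at the identity'' should force extra fixed points. More concretely, I would try to show that if $\Fix R_\mu$ were trivial, then the absolute value $|\mu|$ (or some positive functional dominated by $\mu$) would yield, via the Poisson boundary construction of \cite{KNR2} for states, a trivial boundary, and that triviality of a boundary for a suitable state characterises amenability. Since we are allowed to invoke the classical (state) theory, the cleanest route is to relate $\Fix R_\mu$ to the harmonic elements for an associated positive operator.

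For the ``only if'' direction, I would assume $\QG$ is amenable and construct an explicit contractive $\mu$ with trivial extended boundary. The most economical candidate is to take $\mu = \phi$ to be a \emph{state} whose Poisson boundary is already known to be trivial: amenability of $\QG$ is equivalent (by the theory recalled via \cite{KNR2} and the references therein) to the existence of a state $\phi \in M^u(\QG)$ for which the space of $R_\phi$-harmonic elements is $\bc I_{L^\infty(\QG)}$. For such a state, the preceding Lemma tells us $\Fix R_\phi$ is a unital subspace, and since it coincides with the classical (von Neumann algebraic) Poisson boundary, its triviality gives exactly $\Fix R_\phi = \bc I_{L^\infty(\QG)}$. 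Thus the state $\phi$ furnishes the required contractive $\mu$.

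The key technical bridge, and the step I expect to be the main obstacle, is establishing the equivalence ``$\QG$ amenable $\iff$ some state has trivial Poisson boundary'' in a form I can actually cite. For discrete quantum groups this is the classical Izumi--Kaimanovich--Vershik type statement, but for \emph{general} locally compact quantum groups the relationship between amenability and triviality of a Poisson boundary of \emph{some} state is more delicate, and one must be careful about which notion of amenability (of $\QG$ versus of $\hQG$) is in play. I would therefore spend most of the effort pinning down the correct formulation: amenability of $\QG$ should be equated with the existence of a right-invariant mean on $L^\infty(\QG)$, and this mean should be realised as (a weak$^*$ cluster point of Ces\`aro averages of) $R_\phi$ for a well-chosen state $\phi$, forcing the harmonic space to collapse. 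The contractive (non-state) case in the first implication then reduces to the state case by passing to a dominating positive functional, using that any nontrivial fixed point of $R_\mu$ survives in the boundary of an associated state.

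The remaining verifications---that the constructed $\mu$ is indeed contractive, that $\Fix R_\mu$ carries the W*-TRO structure from Theorem \ref{extended}, and that the scalar multiples of the identity always lie in $\Fix R_\mu$ when $\mu$ is a state---are routine and follow from the preceding Lemma together with the definition of $R_\mu$.
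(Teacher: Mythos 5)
Your ``only if'' direction follows the paper's route: amenability yields a state whose Poisson boundary is trivial, and a state is in particular contractive. The equivalence you were worried about having to establish by hand (``$\QG$ amenable $\iff$ some state has trivial fixed point space $\bc I_{L^\infty(\QG)}$'') is not an obstacle at all: it is available in citable form for general locally compact quantum groups as Theorem 4.2 of \cite{KNR} (note: \cite{KNR}, not \cite{KNR2}), and that citation is exactly what the paper uses.

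The genuine gap is in your ``if'' direction. You propose to take an arbitrary contractive $\mu$ with $\Fix R_\mu = \bc I_{L^\infty(\QG)}$ and reduce to the state case by passing to $|\mu|$ or some dominating positive functional, asserting that ``any nontrivial fixed point of $R_\mu$ survives in the boundary of an associated state''. This step is unjustified: the polar decomposition of a functional does not intertwine $R_\mu$ with $R_{|\mu|}$, and there is no general inclusion between $\Fix R_\mu$ and $\Fix R_{|\mu|}$ in either direction, so the proposed reduction would fail. Moreover, the reduction is unnecessary, and this is the observation you are missing: the hypothesis $\Fix R_\mu = \bc I_{L^\infty(\QG)}$ already forces $I_{L^\infty(\QG)} \in \Fix R_\mu$, i.e.\ $R_\mu(I_{L^\infty(\QG)}) = I_{L^\infty(\QG)}$, hence $\mu(I_{MC_0^u(\QG)}) = 1$; together with $\|\mu\| \le 1$ this means $\mu$ is itself a \emph{state}. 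That is precisely the content of the lemma preceding the corollary --- you invoke that lemma only in the easy direction (state $\Rightarrow$ the fixed point space is unital), whereas it is the converse direction that does all the work here. Once $\mu$ is known to be a state, Theorem 4.2 of \cite{KNR} applies to $\mu$ directly and yields amenability, with no contrapositive argument and no passage to absolute values.
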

\begin{proof}
Follows from the last lemma and Theorem 4.2 of \cite{KNR}.
\end{proof}

Given a contractive $\mu \in M\sp u(\QG)$ we can also consider an associated
convolution operator $\Theta_{\mu}$ acting on $B(\ltwo(\QG))$, defined
in \cite{JNR} (see also \cite{KNR2}): it is a unique normal completely
bounded map such that \begin{equation} \wt{\Com} \circ \Theta_{\mu} =
  (\id_{B(L\sp2(\QG))} \ot R_{\mu}) \circ \wt{\Com}. \label{ThetaL} \end{equation}

\begin{theorem} \label{BL2TRO}
Let $\mu \in M\sp u(\QG)$ be contractive.
Then the fixed point space $\Fix \Theta_{\mu}:=\{x\in B(L^2(\QG)):
\Theta_{\mu}(x) = x\}$ has a unique \textup{(}up to a weak$^*$-continuous
complete isometry\textup{)} structure of a W*-TRO, which we will denote
$\TY_{\mu}$.
\end{theorem}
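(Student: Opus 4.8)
The plan is to obtain this exactly as Theorem \ref{extended} was deduced, namely as a direct application of Theorem \ref{CE_TRO}, this time to the von Neumann algebra $\mlg = B(\ltwo(\QG))$ and the map $P = \Theta_{\mu}$. Since $\Theta_{\mu}$ is normal by its very definition in \cite{JNR}, the only point requiring genuine verification is that $\Theta_{\mu}$ is a \emph{complete contraction}, not merely completely bounded. Once this is in place, Theorem \ref{CE_TRO} equips $\Fix \Theta_{\mu}$ with a unique ternary product (given by the Choi--Effros type formula of that theorem via Proposition \ref{TROconst}) making it a W*-TRO; the asserted uniqueness up to a weak$^*$-continuous complete isometry is then precisely the uniqueness clause of Theorem \ref{CE_TRO}, which ultimately rests on Hamana's uniqueness of TRO structures.

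To establish complete contractivity I would exploit the defining intertwining relation \eqref{ThetaL}, that is $\wt{\Com}\circ \Theta_{\mu} = (\id_{B(\ltwo(\QG))} \ot R_{\mu})\circ \wt{\Com}$. The key structural observation is that $\wt{\Com}(y) = V(y \ot I_{\ltwo(\QG)})V^*$ is an injective normal $^*$-homomorphism from $B(\ltwo(\QG))$ into $B(\ltwo(\QG)) \wot \Linf(\QG)$ (the image landing in the latter because $V \in \Linf(\hQG)' \wot \Linf(\QG)$, as recalled before \eqref{Comext}), and hence a weak$^*$-continuous complete isometry onto its range. Combining this with the fact that $R_{\mu}$ is a normal complete contraction whenever $\mu$ is contractive --- the very fact used in the proof of Theorem \ref{extended} --- and that the normal amplification $\id_{B(\ltwo(\QG))} \ot R_{\mu}$ inherits complete contractivity from $R_{\mu}$, I can argue as follows. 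Since the range of $(\id_{B(\ltwo(\QG))} \ot R_{\mu})\circ \wt{\Com}$ is contained in that of $\wt{\Com}$ (by \eqref{ThetaL} it equals $\wt{\Com}(\Theta_{\mu}(\,\cdot\,))$ and $\wt{\Com}$ is injective), one has
\[ \Theta_{\mu} = \wt{\Com}^{-1}\circ (\id_{B(\ltwo(\QG))} \ot R_{\mu})\circ \wt{\Com}, \]
where $\wt{\Com}^{-1}$ is the completely isometric inverse of $\wt{\Com}$ on its range. Applying $\id_{M_n}\ot(\,\cdot\,)$ throughout and using that the two outer factors stay completely isometric while the middle factor stays completely contractive yields $\|\id_{M_n}\ot\Theta_{\mu}\|\le 1$ for every $n$, so $\Theta_{\mu}$ is a complete contraction and Theorem \ref{CE_TRO} applies with $\mlg=B(\ltwo(\QG))$, giving the W*-TRO $\TY_{\mu}$.

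The step I expect to need the most care is the justification that the normal amplification $\id_{B(\ltwo(\QG))} \ot R_{\mu}$ is completely contractive on $B(\ltwo(\QG)) \wot \Linf(\QG)$: this is where one uses that $R_{\mu}$ is normal together with the identity $\|\id_{B(H)}\ot \Phi\|_{\mathrm{cb}} = \|\Phi\|_{\mathrm{cb}}$ for a normal completely bounded $\Phi$, so that the extension of $\id\odot R_\mu$ to the von Neumann algebraic tensor product does not increase the cb-norm. Everything else --- the $^*$-homomorphism property and injectivity of $\wt{\Com}$, and the final appeal to Theorem \ref{CE_TRO} --- is routine, so the genuine content of the argument is the transfer of contractivity from $\mu$ (equivalently from $R_{\mu}$) to $\Theta_{\mu}$ across the intertwiner $\wt{\Com}$.
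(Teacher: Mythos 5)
Your proposal is correct and takes essentially the same route as the paper: the paper's entire proof is the observation that $\Theta_{\mu}$ is a normal complete contraction, followed by a direct appeal to Theorem \ref{CE_TRO}. Your extra step --- establishing complete contractivity by factoring $\Theta_{\mu} = \wt{\Com}^{-1}\circ(\id_{B(\ltwo(\QG))}\ot R_{\mu})\circ\wt{\Com}$ through the completely isometric normal $^*$-homomorphism $\wt{\Com}$ --- is a sound filling-in of a fact the paper simply asserts (deferring to \cite{JNR} and \cite{KNR2}).
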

\begin{proof}
This is an immediate consequence of Theorem \ref{CE_TRO}, as
$\Theta_{\mu}:B(L^2(\QG)) \to B(L^2(\QG))$ is a normal  complete contraction.
\end{proof}

Let $\mu \in M\sp u(\QG)$ be contractive. Fix a free ultrafilter $\beta$ and use
it as in Theorem \ref{CE_TRO} to construct completely contractive
projections $P$ from $\Linf(\QG)$ onto $\Fix R_{\mu}$ and $P_{\Theta}$
from $B(\ltwo(\QG))$ onto $\Fix \Theta_{\mu}$. It is then not
difficult to see that due to \eqref{ThetaL} we have also
\begin{equation} \wt{\Com} \circ P_\Theta = (\id_{B(L\sp2(\QG))}  \ot_F P) \circ
  \wt{\Com}.\label{comPlevel} \end{equation}

\begin{propn}\label{gammaembed}
Let $\QG$ and $\mu$ be as above and let $\iota: \Fix R_\mu \to
\TX_\mu$, $\kappa: \Fix \Theta_\mu \to \TY_\mu$ denote respective
weak$^*$ homeomorphisms.  Then the formula
\[ \gamma= (\id_{B(L\sp2(\QG))} \ot \iota) \circ  \wt{\Com} \circ \kappa^{-1}\]
defines an injective normal TRO morphism
$\gamma: \TY_{\mu} \to B(L\sp2(\QG)) \wot \TX_{\mu}$.
\end{propn}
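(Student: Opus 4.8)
The plan is to verify that $\gamma$ is well-defined (lands in the claimed W*-TRO), normal, injective, and multiplicative for the ternary product, exploiting the compatibility relation \eqref{comPlevel} together with the structure results for the Choi--Effros TROs.

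Let me think about the setup. We have two Choi--Effros TROs: $\TX_\mu$ coming from $\Fix R_\mu \subset L^\infty(\QG)$, and $\TY_\mu$ coming from $\Fix\Theta_\mu \subset B(L^2(\QG))$. The maps $\iota$ and $\kappa$ are the identity-as-TRO-morphism weak*-homeomorphisms from Proposition \ref{TROconst}. The ternary products on the abstract TROs are transported via these maps: $\{a,b,c\}_{\TX_\mu} = \iota(P(\iota^{-1}(a)\,\iota^{-1}(b)^*\,\iota^{-1}(c)))$ and similarly with $P_\Theta$ and $\kappa$ for $\TY_\mu$. The candidate map is $\gamma = (\id \ot \iota)\circ\wt\Com\circ\kappa^{-1}$.

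First I would check that $\gamma$ actually takes values in $B(L^2(\QG))\wot\TX_\mu$. The map $\wt\Com$ sends $B(L^2(\QG))$ into $B(L^2(\QG))\wot L^\infty(\QG)$ by \eqref{Comext}, and $\id\ot\iota$ is applied to the second leg, so I must know that $\wt\Com(\Fix\Theta_\mu)$ lands inside $B(L^2(\QG))\ot_F \Fix R_\mu$ (the Fubini tensor product), so that applying $\id\ot\iota$ makes sense and produces an element of $B(L^2(\QG))\wot\TX_\mu$ (using Lemma \ref{lemma:fubini} to identify Fubini and spatial tensor products, since both $B(L^2(\QG))$ and the fixed-point spaces are weak*-completely-contractively complemented). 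This is exactly where \eqref{comPlevel} enters: for $y\in\Fix\Theta_\mu$ we have $\wt\Com(y)=\wt\Com(P_\Theta y)=(\id\ot_F P)(\wt\Com y)$, which shows the second leg of $\wt\Com(y)$ lies in $\Fix R_\mu=P(L^\infty(\QG))$; the first leg lies in $B(L^2(\QG))$ automatically. Hence $\wt\Com(y)\in B(L^2(\QG))\ot_F\Fix R_\mu$, and $\gamma$ is well-defined.

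Next I would verify the ternary-product property. For $a,b,c\in\TY_\mu$, set $y_a=\kappa^{-1}(a)$ etc.\ in $\Fix\Theta_\mu$. Then $\kappa^{-1}(\{a,b,c\}_{\TY_\mu})=P_\Theta(y_a y_b^* y_c)$, so
\[
\gamma(\{a,b,c\}_{\TY_\mu})=(\id\ot\iota)\,\wt\Com\bigl(P_\Theta(y_a y_b^* y_c)\bigr)=(\id\ot\iota)(\id\ot_F P)\,\wt\Com(y_a y_b^* y_c),
\]
using \eqref{comPlevel}. Since $\wt\Com$ is a $^*$-homomorphism, $\wt\Com(y_a y_b^* y_c)=\wt\Com(y_a)\wt\Com(y_b)^*\wt\Com(y_c)$, and applying $(\id\ot\iota P)$ reproduces the ternary product $\{\gamma(a),\gamma(b),\gamma(c)\}_{B(L^2(\QG))\wot\TX_\mu}$ by the definition of the Choi--Effros product on the second leg (via Youngson's identity $\iota P(\cdot)$ computing the ternary bracket, applied to the $\Fix R_\mu$-leg, the first leg being an honest operator multiplication in $B(L^2(\QG))$). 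This is the analogue of the computation in the proof of Proposition \ref{almostaction}, and it is the step requiring the most care: one must keep straight that the first tensor leg multiplies associatively in $B(L^2(\QG))$ while the second leg's product is the \emph{twisted} Choi--Effros product, and check that $(\id\ot_F P)$ distributes correctly. Normality of $\gamma$ follows because $\wt\Com$ is normal, $\iota$ is a weak*-homeomorphism, and $\kappa^{-1}$ is a weak*-homeomorphism. Finally, injectivity follows since $\wt\Com(y)=V(y\ot I)V^*$ is injective on all of $B(L^2(\QG))$ (conjugation by a unitary), and $\iota,\kappa$ are bijections, so $\gamma$ is a composition of injective maps.

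**The main obstacle** will be the ternary-multiplicativity computation, specifically making precise that applying $(\id\ot_F P)$ to the product $\wt\Com(y_a)\wt\Com(y_b)^*\wt\Com(y_c)$ in $B(L^2(\QG))\wot L^\infty(\QG)$ recovers the transported Choi--Effros ternary product on $B(L^2(\QG))\wot\TX_\mu$. This rests on Youngson's identity in the sliced form, i.e.\ that $(\id\ot P)$ behaves like a conditional-expectation-type slice map for the ternary structure on the second leg; the subtlety is exactly that $P$ is merely completely contractive, not positive, so one must invoke its idempotent TRO-morphism properties from \cite{Youngson} rather than any bimodule property, as in the remark following Proposition \ref{almostaction}.
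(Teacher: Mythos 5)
Your proposal is correct and takes essentially the same route as the paper: the paper's own proof consists of the single remark that the statement ``is proved similarly as Proposition \ref{almostaction}, using the intertwining relation \eqref{comPlevel}'', and that is precisely the computation you carry out — using \eqref{comPlevel} to pull $P_\Theta$ through $\wt{\Com}$, multiplicativity of $\wt{\Com}$, and the Choi--Effros/Youngson structure on the second leg. Your explicit treatment of well-definedness (via Lemma \ref{lemma:fubini}), normality, and injectivity (via the unitary implementation $\wt{\Com}=V(\cdot\ot I)V^*$) just fills in details the paper leaves implicit.
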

\begin{proof}
This is proved similarly as Proposition \ref{almostaction}, using the intertwining relation \eqref{comPlevel}.
\end{proof}

In the case where $G$ is a classical group and $\mu \in M(G)$ is
contractive, we can in fact identify the image of the map
$\gamma$. First of all we can show via Theorem
\ref{actiononFixedPoints} that there is a natural action of $G$ on the
W*-TRO arising from $\Fix R_{\mu}$.

\begin{lem}\label{actionconv}
Let $G$ be a locally compact  group and let $\mu \in M(G)$ be
contractive. Then there is a natural action $\alpha$ of $G$ on the
W*-TRO $\TX_{\mu}$, given essentially by the left multiplication.
\end{lem}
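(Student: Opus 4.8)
The plan is to apply Theorem \ref{actiononFixedPoints} with $\mlg = \Linf(G)$, the projection $P$ being the completely contractive Ces\`aro--ultrafilter projection $\wt{P}_\beta$ from $\Linf(G)$ onto $\Fix R_\mu$ (supplied by Theorem \ref{CE_TRO}), whose image $\wt{\TX}_\mu$ is identified with $\TX_\mu$ via the weak$^*$-homeomorphism $\iota$, and with $\beta$ being the natural action of $G$ on $\Linf(G)$ by left translation. Concretely, I would take $\beta_g(f)(h) = f(g^{-1}h)$ for $f\in\Linf(G)$, $g,h\in G$; this is the standard left-translation action and it is a well-known continuous action of $G$ on the von Neumann algebra $\Linf(G)$ (the continuity in the sense of Definition \ref{def:group-act} is classical, for instance via Proposition \ref{propn:action-cts} and uniform continuity of translation on $L^1(G)$). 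Once the commutation hypothesis \eqref{commutPpoint} is verified, Theorem \ref{actiononFixedPoints} directly produces the desired action $\alpha_g = \iota\circ\beta_g\circ\iota^{-1}$ of $G$ on $\TX_\mu$, and the statement follows.

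The substantive point to check is thus the intertwining relation $\beta_g\circ\wt{P}_\beta = \wt{P}_\beta\circ\beta_g$ for every $g\in G$. The key step here is the observation that the right-convolution operator $R_\mu$ commutes with left translations: since $R_\mu$ is defined through convolution on the right (acting via $\la R_\mu(x),\omega\ra = \la\omega\conv\mu,x\ra$), and left translation corresponds to convolving by a point mass on the left, the two operations commute because convolution on a group is associative. More precisely, I would record that $\beta_g\circ R_\mu = R_\mu\circ\beta_g$ for all $g\in G$, which is a direct computation at the level of the predual $L^1(G)$ using associativity of the convolution product in $M(G)$. Given this, the commutation of $\beta_g$ with each power $R_\mu^k$ is immediate, hence $\beta_g$ commutes with the Ces\`aro averages $\frac1n\sum_{k=0}^{n-1}R_\mu^k$; and since each $\beta_g$ is normal (weak$^*$-continuous), it passes through the weak$^*$ ultrafilter limit defining $\wt{P}_\beta$, giving $\beta_g\circ\wt{P}_\beta = \wt{P}_\beta\circ\beta_g$ as required.

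The main obstacle, though essentially bookkeeping rather than a genuine difficulty, is making precise the identification between the abstract image $\wt{\TX}_\mu = \Fix R_\mu$ sitting inside $\Linf(G)$ and the W*-TRO $\TX_\mu$ obtained via the Choi--Effros construction of Proposition \ref{TROconst}, and confirming that $\beta_g$ genuinely preserves $\Fix R_\mu$ so that the conjugation $\iota\circ\beta_g\circ\iota^{-1}$ makes sense. That $\beta_g$ preserves $\Fix R_\mu$ is precisely the content of the commutation relation: if $R_\mu(x)=x$ then $R_\mu(\beta_g(x)) = \beta_g(R_\mu(x)) = \beta_g(x)$. After this, all the hypotheses of Theorem \ref{actiononFixedPoints} are in place and the conclusion is automatic. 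I would close by noting that the resulting $\alpha$ is indeed ``given essentially by the left multiplication'' in the sense that, transported through $\iota$, it is exactly the restriction of the left-translation action $\beta$ to the fixed-point space, which justifies the phrasing in the statement.
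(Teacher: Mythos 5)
Your proposal is correct and follows essentially the same route as the paper: it takes the left-translation action $\beta_g(f)(h)=f(g^{-1}h)$ on $\Linf(G)$, verifies $\beta_g\circ R_\mu=R_\mu\circ\beta_g$ (and hence commutation with the ultrafilter--Ces\`aro projection onto $\Fix R_\mu$), and then invokes Theorem \ref{actiononFixedPoints}. The only difference is that you spell out the passage from commutation with $R_\mu$ to commutation with the projection (via powers, Ces\`aro averages and normality), which the paper leaves as an easy check.
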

\begin{proof}
Consider the action $\beta$ of $G$ on $L^{\infty}(G)$ given by the formula
\[ (\beta_g (f))(h) = f(g\inv h), \qquad f\in \Linf(G), g, h \in G.\]
It is then easy to check that we have $\beta_g \circ R_{\mu} =  R_{\mu} \circ \beta_g$, and so also $\beta_g \circ P = P \circ \beta_g$, where $P$ is a completely contractive projection given by the limit (along some ultrafilter) of iterates of $R_{\mu}$. Theorem \ref{actiononFixedPoints} ends the proof.
\end{proof}

We are now ready to establish the connection between the W*-TROs $\TX_{\mu}$ and $\TY_{\mu}$.

\begin{theorem} Let $G$ be a locally compact group and
let $\mu \in M(G)$ be contractive. Let $\alpha$ be the action of $G$
on the W*-TRO $\TX_{\mu}$ introduced  in Lemma \ref{actionconv}. We
then have a natural isomorphism
\[ \TY_{\mu} \cong G\ltimes_{\alpha}\TX_{\mu},\]
given by the map $\gamma$ introduced in Proposition \ref{gammaembed}.
\end{theorem}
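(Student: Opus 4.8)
The plan is to show that the injective normal TRO morphism $\gamma \colon \TY_{\mu} \to B(L^2(\QG)) \wot \TX_{\mu}$ from Proposition \ref{gammaembed} has image exactly equal to the crossed product $G \ltimes_{\alpha} \TX_{\mu}$. Since $\gamma$ is already known to be injective and normal, establishing that $\gamma(\TY_{\mu}) = G \ltimes_{\alpha} \TX_{\mu}$ will give the desired isomorphism (with $\gamma$ itself implementing it). By Lemma \ref{equivcrprod} and Definition \ref{defcrprod}, the crossed product is the weak$^*$-closed linear span of $(\VN(G) \ot I_{\Kil})\pi_{\alpha}(\TX_{\mu})$, so the core task is to identify $\gamma$ with the representation $\pi_{\alpha}$ up to this generated space.

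First I would unwind the definition $\gamma = (\id_{B(L^2(\QG))} \ot \iota) \circ \wt{\Com} \circ \kappa^{-1}$ and compare it with the explicit form of $\pi_{\alpha}$ given in Proposition \ref{pialpha}. The key observation is that for the classical group $G$ the extended coproduct $\wt{\Com}$ on $B(L^2(G))$, restricted appropriately and composed with the Choi--Effros embedding $\iota$, reproduces the action-implementing structure: $\wt{\Com}(y) = V(y \ot I)V^*$ where $V$ is the right multiplicative unitary, and on $\VN(G) = L^{\infty}(\hat{\QG})$ this unitary encodes precisely the regular representation data appearing in $\tau = \lambda \ot I_{\Kil}$. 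I would check that $\gamma$ intertwines the convolution-boundary structure with the amplified regular representation, so that the generators $\tau_g \pi_{\alpha}(x)$ of $G \ltimes_{\alpha} \TX_{\mu}$ all lie in $\gamma(\TY_{\mu})$, and conversely.

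The cleanest route, however, is to lift everything to the linking von Neumann algebras and invoke the already-established von Neumann algebraic version of this result. Namely, the main theorem of \cite{KNR2} identifies $\Fix \Theta_{\mu}$ (in the positive case, the noncommutative Poisson boundary on $B(L^2(\QG))$) with the crossed product of $\Fix R_{\mu}$ by the induced action. Here I would apply that result at the level of the linking algebras $\lX_{\mu}$ and $\mathsf{R}_{\TY_{\mu}}$, using Proposition \ref{extend} to pass from $\gamma$ to its Hamana extension, Proposition \ref{Qcrossedcorner} to realise $G \ltimes_{\alpha} \TX_{\mu}$ as a corner of $G \ltimes_{\beta} \lX_{\mu}$, and Theorem \ref{EnockTRO} to express both crossed products as fixed point spaces of the auxiliary action $\delta$. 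The intertwining relation \eqref{comPlevel} is what guarantees compatibility of the corner structures on both sides.

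The main obstacle I expect is exactly the one flagged in the introduction: the Choi--Effros construction produces $\TX_{\mu}$ and $\TY_{\mu}$ as \emph{abstract} W*-TROs, whose linking von Neumann algebras depend on a choice of concrete realisation that need not relate to the original data $(\Linf(\QG), R_{\mu})$ and $(B(L^2(\QG)), \Theta_{\mu})$. Thus one cannot simply quote the von Neumann algebraic theorem of \cite{KNR2} for a canonical linking algebra; instead I must track how $\iota$ and $\kappa$ transport the concrete fixed-point realisations to the abstract TROs and verify that $\gamma$ respects these identifications. Resolving this requires showing that the map $\gamma$ — built intrinsically from $\wt{\Com}$ and the projections $P, P_{\Theta}$ — is independent of the realisation, which is precisely why the argument is restricted to classical $G$: the pointwise action $\alpha_g = \iota \circ \beta_g \circ \iota^{-1}$ from Lemma \ref{actionconv} provides the concrete handle that the purely coalgebraic quantum picture lacks. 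I would therefore verify the surjectivity of $\gamma$ onto the crossed product by a direct density computation using the pointwise description, checking that the generators $\pi_{\alpha}(x)\sigma_g$ of $G \ltimes_{\alpha} \TX_{\mu}$ are matched by $\gamma$ applied to suitable elements of $\Fix \Theta_{\mu}$.
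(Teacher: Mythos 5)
Your central strategy has a genuine gap. The ``cleanest route'' you propose -- lifting to the linking von Neumann algebras and invoking the main theorem of \cite{KNR2} -- is not available: that theorem identifies $\Fix \Theta_{\mu}$ with a crossed product of $\Fix R_{\mu}$ only when $\mu$ is a \emph{state}, in which case both fixed point spaces are von Neumann algebras and no linking algebra intervenes. For a merely contractive $\mu$ the linking algebras $\mathsf{R}_{\TX_{\mu}}$ and $\mathsf{R}_{\TY_{\mu}}$ are not fixed point spaces of any convolution-type operator -- they only come into existence after one fixes a concrete realisation of the abstract TROs $\TX_{\mu}$ and $\TY_{\mu}$ -- so there is no von Neumann algebraic statement in \cite{KNR2} that applies to them, and identifying the image of the Hamana extension of $\gamma$ with $G\ltimes\mathsf{R}_{\TX_{\mu}}$ is essentially equivalent to the theorem you are trying to prove. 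You flag this obstacle yourself in your last paragraph, but your proposed resolution (``track how $\iota$ and $\kappa$ transport the realisations'') is not an argument, and the fallback ``direct density computation'' is left unspecified. The paper's proof never lifts to linking algebras; it works directly with the TROs.

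What the proof actually needs, and what is missing from your proposal, are two concrete ingredients. For the inclusion $\gamma(\TY_{\mu})\subset G\ltimes_{\alpha}\TX_{\mu}$ the paper uses Proposition \ref{crossedfixed} (not Theorem \ref{EnockTRO}), i.e.\ $G\ltimes_{\alpha}\TX_{\mu} = \Fix(\Ad_{\rho}\ot\alpha)$, together with the identities $\alpha_g\circ\iota = \iota\circ\beta_g$ and $\beta_g = \Ad_{\lambda_g}$ on $\Linf(G)\subset B(\ltwo(G))$, and then the explicit computation, from the formula for the right multiplicative unitary $V$ of a classical group, that $(\rho_g\ot\lambda_g)V(y\ot 1)V^*(\rho_g^*\ot\lambda_g^*) = V(y\ot 1)V^*$ for every $y\in B(\ltwo(G))$ -- this is where the proof gets concrete traction, not the vague intertwining check you describe. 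For the reverse inclusion, the crux -- absent from your proposal -- is that $\Theta_{\mu}$ is a $\VN(G)$-module map extending $R_{\mu}$, so that $\lambda_g x\in\Fix \Theta_{\mu}$ whenever $x\in\Fix R_{\mu}$ and $g\in G$; since the first leg of $V$ commutes with $\VN(G)$ one gets $\wt{\Com}(\lambda_g x) = (\lambda_g\ot I)\Com(x)$, which exhibits each generator $(\lambda_g\ot I)\pi_{\alpha}(\iota(x))$ of the crossed product (Lemma \ref{equivcrprod}) as $\gamma(\kappa(\lambda_g x))$. One also needs that $\gamma$ has weak$^*$-closed image (being a normal injective TRO morphism) and the intertwining $(\id\ot\iota)\circ\pi_{\beta} = \pi_{\alpha}\circ\iota$. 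Without identifying the elements $\lambda_g x$ and the module property of $\Theta_{\mu}$, the assertion that the generators are ``matched by $\gamma$ applied to suitable elements of $\Fix \Theta_{\mu}$'' has no content.
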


\begin{proof}
We begin by showing that $\gamma(\TY_{\mu})$ is contained in
$G\ltimes_{\alpha} \TX_{\mu} $. By Proposition \ref{crossedfixed}
it suffices to show that for each $x \in \wt{\TY}_{\mu}$ and $g \in
G$ we have $(\Ad_{\rho} \ot \alpha)_g(\gamma(\kappa(x)))=
\gamma(\kappa(x))$.
Recall that by the definition of the action constructed in Lemma
\ref{actionconv}  we have
$\alpha_g \circ \iota = \iota \circ \beta_g$. Recall also that if we
view $\Linf(G)$ as a subalgebra of
$B(\ltwo(G))$, then the map $\beta_g$ is equal to $(\Ad_{\lambda})_g$,
where $\lambda$ denotes again the left regular representation. This
means that
\[ ((\Ad_{\rho})_g \ot \alpha_g)\bigl( (\id \ot \iota)\circ \wt{\Com}(x) \bigr)
= (\id \ot \iota) \bigl((\rho_g \ot \lambda_g) V (x \ot 1)V^* (\rho_g^* \ot \lambda_g^*) \bigr)
\]
Recall that the right multiplicative unitary for $G$ is given  by the formula
\[(Vf)(g,h) = \delta(h)\sp{1/2}f(gh,h),\qquad f \in \ltwo(G), g,h \in G,\]
where $\delta$ is the modular function of $G$.
Then an explicit calculation shows that for any $y \in B(\ltwo(G))$ we have
\[ (\rho_g \ot \lambda_g) V (y \ot 1)V^* (\rho_g^* \ot \lambda_g^*) = V (y \ot 1)V^*.\]
Thus $(\Ad_{\rho} \ot \alpha)_g(\gamma(\kappa(x)))$ does not in fact depend on $g$ and the first part of the theorem is proved.

As $\gamma$ is a normal TRO morphism, it has a weak$^*$-closed
image. Thus to show that $\gamma(\TY_{\mu})$ contains
$\TX_{\mu}\rtimes_{\alpha} G$ it suffices (by Lemma \ref{equivcrprod} and
Definition \ref{defcrprod}) to show that for every $g\in G$ and
$x \in \Fix R_{\mu}$ we have
$(\lambda_g \ot I)(\pi_{\alpha}(\iota(x)) \in\gamma(\TY_{\mu})$ (note
that the symbol $I$ above can be interpreted
explicitly once we fix a concrete representation of $\TX_{\mu}$). This
is equivalent
to proving that
$(\id \ot \iota^{-1})\left((\lambda_g \ot I)(\pi_{\alpha}(\iota(x))\right) \in
\wt{\Com}(\Fix \Theta_{\mu})$. Consider the map
$(\id\ot\iota^{-1}):B(\ltwo(G))\wot \TX_{\mu} \to B(\ltwo(G))\wot\Fix R_{\mu}$.
Note that both the domain and range spaces are in
fact $B(\ltwo(G))$ left modules in a natural way and moreover that
$(\id \ot \iota^{-1})$ is a $B(\ltwo(G))$-module map. Thus, recalling
how the action $\alpha$ was constructed in Lemma \ref{actionconv}
we can first verify that the integrated forms of actions $\alpha$ and
$\beta$ satisfy the equality
$(\id \ot \iota)\circ \pi_{\beta} = \pi_{\alpha}\circ \iota$
(remembering that $\iota$ is a homeomorphism for weak$^*$ topologies
and using equality \eqref{point>int})  and then see that
$(\id \ot \iota^{-1})\left((\lambda_g \ot I)(\pi_{\alpha}(\iota(x))\right) =
(\lambda_g \ot I)\pi_{\beta}(x)$. Now the integrated form of the
action $\beta$ is nothing but the coproduct, so we need to show simply
that $(\lambda_g \ot I)\Com(x) \in \wt{\Com}(\Fix \Theta_{\mu})$. To this
end we consider $\lambda_g x \in B(\ltwo(G))$. As $\Theta_{\mu}$ is a
$\VN(G)$-module map which extends $R_{\mu}$ and $x \in \Fix R_\mu$,
we have $\lambda_g x \in \Fix \Theta_{\mu}$. Then as the first leg of $V$
commutes with $\VN(G)$ we have $\wt{\Com} (\lambda_g x) = (\lambda_g
\ot I)\wt{\Com}(x)= (\lambda_g \ot I)\Com(x)$. This ends the proof.

\end{proof}

\begin{remark}
The analogous result for $\mu$ being a state is shown in \cite{KNR} for $G$ replaced by any locally compact quantum group. Here the stumbling block in extending the last theorem to the quantum setting is precisely the fact that we are not able to deduce in general that the map $\alpha$ constructed in  Proposition \ref{almostaction} is non-degenerate (otherwise we could use Theorem  \ref{EnockTRO} instead of Proposition \ref{crossedfixed}).
\end{remark}

\end{document}